\newtheorem{theorem}{Theorem}[section]
\newtheorem{lemma}[theorem]{Lemma}
\newtheorem{proposition}[theorem]{Proposition}
\newtheorem{corollary}[theorem]{Corollary}
\newtheorem{remark}[theorem]{Remark}
\newtheorem{question}[theorem]{Question}
\newtheorem{problem}[theorem]{Problem}
\newtheorem*{claim}{Claim}
\newtheorem{construction}[theorem]{Construction}
\newenvironment{proof1} {\begin{proof}[Proof of Proposition \ref{construction-properties}]} {\end{proof}}
\newenvironment{proof2} {\begin{proof}[Proof of the claim]} {\end{proof}}
\newtheorem{definition}[theorem]{Definition}
\theoremstyle{definition}
\newtheoremstyle{sltheorem}
{}                % Space above
{}                % Space below
{\slshape}        % Theorem body font % (default is "\upshape")
{}                % Indent amount
{\bfseries}       % Theorem head font % (default is \mdseries)
{.}               % Punctuation after theorem head % default: no punctuation
{ }               % Space after theorem head
{\thmname{#1}\thmnote{ \bfseries #3}}                % Theorem head spec
\theoremstyle{sltheorem}
\theoremstyle{sltheorem}
\newtheorem{reptheorem}{Theorem}
\DeclareMathOperator{\lin}{span}
\DeclareMathOperator{\even}{Even}
\DeclareMathOperator{\odd}{Odd}
\DeclareMathOperator{\supp}{supp}
\DeclareMathOperator{\rd}{rd}
\DeclareMathOperator{\graph}{graph}
\begin{document}

\newcommand{\cc}{\mathfrak{c}}
\newcommand{\N}{\mathbb{N}}
\newcommand{\BB}{\mathbb{B}}
\newcommand{\C}{\mathbb{C}}
\newcommand{\Q}{\mathbb{Q}}
\newcommand{\R}{\mathbb{R}}
\newcommand{\Z}{\mathbb{Z}}
\newcommand{\T}{\mathbb{T}}
\newcommand{\PP}{\mathbb{P}}
\newcommand{\rin}{\right\rangle}
\newcommand{\SSS}{\mathbb{S}}
\newcommand{\forces}{\Vdash}
\newcommand{\dom}{\text{dom}}
\newcommand{\osc}{\text{osc}}
\newcommand{\F}{\mathcal{F}}
\newcommand{\A}{\mathcal{A}}
\newcommand{\B}{\mathcal{B}}
\newcommand{\I}{\mathcal{I}}
\newcommand{\X}{\mathcal{X}}
\newcommand{\Y}{\mathcal{Y}}
\newcommand{\CC}{\mathcal{C}}
\newcommand{\non}{\mathfrak{non}}
\newcommand{\add}{\mathfrak{add}}
\newcommand{\cov}{\mathfrak{cov}}
\newcommand{\cof}{\mathfrak{cof}}

\author{Damian G\l odkowski}
\address{Institute of Mathematics of the Polish Academy of Sciences,
ul.  \'Sniadeckich 8,  00-656 Warszawa, Poland}
\address{Faculty of Mathematics, Informatics, and Mechanics, 
University of Warsaw, ul. Banacha 2, 02-097 Warszawa, Poland}
\email{\texttt{d.glodkowski@uw.edu.pl}}

\thanks{This research was funded in part by the NCN (National Science
Centre, Poland) research grant no. 2021/41/N/ST1/03682. For the purpose of Open Access, the author has applied a CC-BY public copyright licence to any
Author Accepted Manuscript (AAM) version arising from this submission.}

\subjclass[2020]{03E35, 46E15, 47L10, 54F45}

\title{A Banach space C(K) reading the dimension of K}

\begin{abstract} 
Assuming Jensen's diamond principle ($\diamondsuit$) we construct for every natural number $n>0$ a compact Hausdorff space $K$ such that whenever the Banach spaces $C(K)$ and $C(L)$ are isomorphic for some compact Hausdorff $L$, then the covering dimension of $L$ is equal to $n$. The constructed space $K$ is separable and connected, and the Banach space $C(K)$ has few operators i.e. every bounded linear operator $T:C(K)\rightarrow C(K)$ is of the form $T(f)=fg+S(f)$, where $g\in C(K)$ and $S$ is weakly compact.
\end{abstract}

\keywords{Banach spaces of continuous functions, Banach spaces with few operators, covering dimension, diamond principle}

\maketitle
\section{Introduction}
In \cite{few-operators-main} Koszmider showed that there is a compact Hausdorff space $K$ such that whenever $L$ is compact Hausdorff and the Banach spaces $C(K)$ and $C(L)$ are isomorphic, the dimension of $L$ is greater than zero. In the light of this result Pe\l{}czy\'{n}ski asked, whether there is a compact space $K$ with $\dim(K)=k$ for given $k\in \omega\backslash\{0\}$, such that if $C(K)\sim C(L)$, then $\dim(L)\geq k$ (\cite[Problem 4]{few-operators-survey}). We show that the answer to this question is positive, if we assume Jensen's diamond principle ($\diamondsuit$). Namely, we prove the following:
 \begin{reptheorem}[\ref{main}]
Assume $\diamondsuit$. Then for every $k\in \omega\cup \{\infty\}$ there is a compact Hausdorff space $K$ such that $\dim(K)=k$ and whenever $C(K)\sim C(L)$, $\dim(L)=k$.
 \end{reptheorem}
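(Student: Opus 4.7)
The plan is to construct $K$ as the inverse limit of a continuous transfinite sequence $\langle K_\alpha:\alpha<\omega_1\rangle$ of compact metrizable spaces, where $K_0$ is a canonical compact space of covering dimension exactly $k$ (for instance $[0,1]^k$ when $k$ is finite, and the Hilbert cube when $k=\infty$). Each bonding map $\pi_\alpha:K_{\alpha+1}\to K_\alpha$ will be a small topological modification that preserves separability, connectedness and, most delicately, the covering dimension $k$. At limit stages one takes the inverse limit and must arrange that $\dim K_\lambda$ neither collapses (by ensuring a compatible sequence of essential maps into $[0,1]^k$ survives in the limit) nor exceeds $k$.

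At successor stages I would invoke the $\diamondsuit$-sequence to anticipate a code for a bounded linear operator defined on a dense countable subspace of $C(K_\alpha)$. If this predicted operator does not already arise, up to weakly compact perturbation, as multiplication by a continuous function, then $\pi_\alpha$ is chosen so as to introduce a new fibre structure on $K_{\alpha+1}$ on which every extension of the predicted operator to $C(K)$ is forced to fail being of the form $M_g+S$. The local device used for this killing is modelled on Koszmider's original few-operators construction, but adapted so that the pieces attached over $K_\alpha$ have covering dimension at most $k$ and preserve an essential map into $[0,1]^k$.

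Once $K$ is built, a standard closing-off argument shows that every operator $T:C(K)\to C(K)$ is coded at some countable stage and is therefore of the prescribed form $M_g+S$; this yields the few-operators conclusion of the abstract. Given now a compact $L$ and an isomorphism $T:C(K)\to C(L)$, one exploits the few-operators structure, together with the usual Amir--Cambern/weighted-composition style rigidity consequences it produces, to realize $L$, modulo weakly compact perturbations, as a continuous image (or preimage) of $K$ induced by an almost-homeomorphism. Connectedness and separability of $K$ rule out nontrivial clopen decompositions that would otherwise allow $L$ to pick up pieces of different dimensions, and the monotonicity and mapping theorems for covering dimension then force $\dim L=k$.

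The principal obstacle I anticipate is the simultaneous satisfaction of two competing demands in the inductive step. Killing the bad operator handed down by $\diamondsuit$ requires introducing genuine new topological complexity (typically small converging sequences or cellular attachments) into the fibres of $\pi_\alpha$, while dimension theory is notoriously sensitive to precisely such modifications: a single badly chosen attachment can collapse or inflate dimension in the inverse limit. Calibrating the local attachments to be simultaneously rich enough to destroy operators yet tame enough to keep $\dim K_\alpha=k$ stable across all $\alpha<\omega_1$, and then verifying $\dim K=k$ in the inverse limit, is where the real work lies. The case $k=\infty$ should be genuinely easier, since one only needs $\dim K_\alpha\to\infty$ rather than exact stability at a fixed finite value; conversely, the case of small finite $k\geq 2$ is the most subtle because one has the least room to manoeuvre topologically.
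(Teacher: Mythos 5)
Your outline does follow the same broad strategy as the paper (an $\omega_1$-inverse limit of metrizable compacta starting from $[0,1]^k$, $\diamondsuit$-anticipation at successor steps, few operators for $C(K)$, then a rigidity theorem transferring the dimension to $L$), but the two points you yourself defer as ``the real work'' are exactly where the sketch is missing the ideas, and in places points in a direction that would fail. First, the killing device: attaching ``small converging sequences or cellular attachments'' is not an option, because a space $K$ with $C(K)$ having few operators can contain no nontrivial convergent sequence (such a sequence yields a complemented copy of $c_0$, contradicting indecomposability, cf.\ Lemma \ref{convergent-sequence}); the paper's device is instead a \emph{strong extension} $K_{\alpha+1}\subseteq K_\alpha\times[0,1]$ by a pairwise disjoint sequence $(f_n)$, adding a supremum of the $f_n$ to the lattice. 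Second, you give no mechanism for keeping $\dim K_\alpha=k$; the paper's mechanism is to choose $(f_n)$ so that $K_\alpha\setminus D((f_n)_n)$ is zero-dimensional (then $\dim K_{\alpha+1}\le k$ by the countable sum theorem, Lemma \ref{extension-dimension}), while strong extensions are essential-preserving (Proposition \ref{extension-property-E}) and essential-preservation survives limits (Theorem \ref{property-E}), so the dimension cannot drop; the fact that such $(f_n)$ can always be found for the guessed data rests on Theorem \ref{measure-zero-dim} (every nonzero Radon measure on a metrizable compactum charges a zero-dimensional compact set) together with Rosenthal's lemma (Proposition \ref{metric-functions}). Relatedly, $\diamondsuit$ cannot usefully guess ``an operator on a dense countable subspace of $C(K_\alpha)$'': an operator on the final $C(K)$ does not restrict to one on $C(K_\alpha)$, and what must be anticipated are the adjoint measures $T^*(\delta_{q_{l_n}})$ restricted to stage $\alpha$, together with the projected open sets and the indices $l_n$ (Lemma \ref{diamond-lemma}); only these give control of $\int f\,d\mu_n$ for the \emph{new} supremum $f$ added at stage $\alpha$, which is the quantity from which the contradiction with Theorem \ref{weak-multiplier} is extracted.

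The final transfer step is also not yet an argument. ``Mapping theorems for covering dimension'' applied to a continuous image or preimage cannot give $\dim L=k$: continuous surjections between compacta do not control covering dimension at all (Peano maps), and Amir--Cambern type rigidity concerns small Banach--Mazur distance, which is irrelevant here. What is actually needed, and what the paper proves as Theorem \ref{few-operators-modulo-finite}, is that $K$ and $L$ are homeomorphic modulo finite sets: from the weak-multiplication representation one builds a bounded \emph{multiplicative} operator $\Psi_T:C(L')\to C(K)$ by passing to the quotient by the weakly compact ideal, shows it is bounded below and has finite-codimensional range (here separability of $K$ gives that $L$ is c.c.c.\ and that $\ker S$ is separable, connectedness and the absence of convergent sequences enter in Proposition \ref{Me-iso}, and Fredholm theory for strictly singular perturbations gives the finite codimension), and concludes that the induced continuous map $\varphi:K\to L'$ is onto with all fibers singletons except finitely many finite ones. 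Even granting this, $\dim L=\dim K$ is not immediate for spaces homeomorphic only modulo finite sets; the paper needs the separate Theorem \ref{dim-equal} (via relative dimension and the countable sum theorem). As it stands, your proposal is a plausible plan whose decisive steps -- the dimension-preserving extension mechanism, the correct object for $\diamondsuit$ to guess, and the precise rigidity-plus-dimension lemma for $L$ -- are missing or misidentified.
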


Note that typically dimension of $K$ is not an invariant of the Banach space $C(K)$ under isomorphisms. For instance, the classical result by Miljutin says that if $K,L$ are compact metrizable uncountable spaces, then the Banach spaces $C(K)$ and $C(L)$ are isomorphic (\cite{miljutin}). This also shows that $C(K)$ with the desired property cannot admit any complemented copy of $C(L)$ where $L$ is compact, metrizable and uncountable (indeed, if $C(K)\sim X\oplus C(L)$, then $C(K)\sim X\oplus C(L)\oplus C([0,1]^n) \sim C(K)\oplus C([0,1]^n)$ for any $n\in \omega$). Another result by Pe\l{}czy\'{n}ski says that if $G$ is an infinite compact topological group of weight $\kappa$, then $C(G)$ is isomorphic to $C(\{0,1\}^\kappa)$ (\cite{pelczynski}). 

On the other hand the space $C(K)$ remembers many topological and set-theoretic properties of $K$. For example Cengiz showed that if $C(K)\sim C(L)$, then $K$ and $L$ have the same cardinalities (\cite{cengiz}). If $K$ is scattered, then by Pe\l{}czy\'{n}ski-Semadeni theorem $L$ is scattered as well (\cite{pelczynski-semadeni}). In this case both spaces must be zero-dimensional. If $K$ is an Eberlein compact, then $L$ is also Eberlein (\cite{negrepontis-banach-spaces}). If $K$ is a Corson compact and $L$ is homogeneous, then $L$ is Corson (\cite{plebanek-corson}). 

Although the isomorphic structure of $C(K)$ does not remember the dimension of $K$, the metric structure of $C(K)$ contains such information, since by the Banach-Stone theorem $K$ and $L$ are homeomorphic, whenever $C(K)$ and $C(L)$ are isometric. Similar results were obtained by Gelfand, Kolmogorov and Kaplansky in the category of rings of functions on compact spaces and in the category of Banach lattices (\cite{gelfand-kolmogorov, kaplansky}). It is also worth to mention that the covering dimension of $K$ is an invariant for the space $C_p(K)$ of continuous functions on $K$ with the pointwise topology (\cite{pestov}).

The key property of the space $K$ that we construct to prove Theorem \ref{main} is the fact that the Banach space $C(K)$ has few operators i.e. every bounded operator $T:C(K)\rightarrow C(K)$ is of the from $T=gI+S$, where $g\in C(K)$ and $S$ is weakly compact. Schlackow showed that if the Banach space $C(K)$ has few operators, $C(K)\sim C(L)$ and both spaces $K,L$ are perfect, then $K$ and $L$ are homeomorphic (\cite{schlackow}). We improve this result under the assumption that $K$ is separable and connected. 

 \begin{reptheorem}[\ref{few-operators-modulo-finite}]
Suppose that $K$ is a separable connected compact Hausdorff space such that $C(K)$ has few operators and $L$ is a compact Hausdorff space such that $C(K)\sim C(L)$. 

Then $K$ and $L$ are homeomorphic modulo finite set i.e. there are open subsets $U\subseteq K, V\subseteq L$ and finite sets $E\subseteq K, F\subseteq L$ such that $U, V$ are homeomorphic and $K=U\cup E, L=V\cup F$.
 \end{reptheorem}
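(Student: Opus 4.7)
The plan is to construct a continuous map $\psi : K \to L$ out of the isomorphism $T : C(K) \to C(L)$ using the few operators hypothesis on $C(K)$, and then to prove that $\psi$ is a homeomorphism from $K$ onto $L$ with a finite set removed. For each $f \in C(L)$ the bounded operator $T^{-1} M_f T : C(K) \to C(K)$ decomposes, by few operators, as $M_{\phi(f)} + W_f$ with $\phi(f) \in C(K)$ and $W_f$ weakly compact. Since $K$ is connected and (in the nontrivial case) has more than one point, it is perfect, and the standard fact that $M_h$ is weakly compact on $C(K)$ only for $h=0$ makes the decomposition unique. Combining uniqueness with the identities $M_{fg}=M_f M_g$, $M_{f+g}=M_f+M_g$ and $M_1=I$ then shows that $\phi : C(L) \to C(K)$ is a unital algebra homomorphism, so Gelfand duality furnishes a continuous map $\psi : K \to L$ satisfying $\phi(f) = f\circ\psi$ for every $f \in C(L)$.

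My next step would be to identify the image $\psi(K)$ as the closed set $Z\subseteq L$ corresponding to the ideal $\ker\phi = \{f\in C(L) : M_f \text{ is weakly compact on } C(L)\}$, so that $\psi(K)=Z$. Because $K$ is connected, $Z$ is automatically connected, and from the analysis of which $f$ give weakly compact $M_f$ on $C(L)$, its complement $L\setminus Z$ must be a scattered open subset of $L$. For injectivity of $\psi$ onto $Z$, I would analyse the measures $T^*\delta_y\in M(K)$ for $y\in Z$: writing $T^*\delta_y=\mu^a_y+\mu^d_y$ as atomic plus diffuse and using the identity $M_f T = TM_{\phi(f)}+T W_f$ together with the fact that weakly compact operators contribute negligibly to atomic masses, every atom $x$ of $T^*\delta_y$ must satisfy $\psi(x)=y$. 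Because $T^*$ is a Banach-space isomorphism of $M(L)$ onto $M(K)$, distinct $y$'s in $Z$ produce measures with disjoint atomic supports; separability of $K$ (which caps the total amount of atomic mass spread across $Z$) then forces each $T^*\delta_y$ to have a unique dominating atom, giving a well-defined inverse to $\psi$ and hence injectivity.

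The main obstacle, in my view, is the final step: showing that the open scattered set $L\setminus Z$ is finite. Since $C(L)$ is not assumed to have the few operators property, Schlackow's symmetric argument is unavailable, and finiteness must be extracted from one-sided information on $C(K)$. I would proceed by contradiction, assuming $L\setminus Z$ infinite; it must then contain infinitely many isolated points $\{y_n\}_{n\in\mathbb{N}}$ of $L$, whose characteristic functions $e_n=\chi_{\{y_n\}}\in C(L)$ form an orthogonal family ($e_n e_m=\delta_{nm} e_n$) spanning a copy of $c_0$. Transferring via $T^{-1}$ yields a weakly null, norm-bounded-below sequence in $C(K)$; combining the orthogonality relations with the few operators property should produce an infinite family of pairwise ``almost orthogonal'' idempotents on $C(K)$ modulo the weakly compact ideal, but connectedness of $K$ together with the uniqueness established in the first step forces all such idempotents to be $0$ or $I$, yielding a contradiction. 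Once $L\setminus Z$ is known to be finite, the continuous bijection $\psi : K \to Z$ is automatically a homeomorphism, and the theorem follows by taking $U=K$, $V=Z$, $E=\emptyset$, and $F=L\setminus Z$.
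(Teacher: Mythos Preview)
Your opening step---defining $\phi:C(L)\to C(K)$ via $T^{-1}M_fT = M_{\phi(f)}+W_f$, checking it is a unital algebra homomorphism, and passing to a continuous $\psi:K\to L$---is exactly what the paper does (Lemmas~4.7--4.8, with the minor refinement that the paper works on $C(L')$ rather than $C(L)$, which amounts to the same thing since $\phi$ kills $\{f:f|L'=0\}$). So far so good.

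The real gap is your injectivity step. The paper does \emph{not} show that $\psi$ is injective, and your atomic analysis does not establish it either. Even granting that every atom of $T^*\delta_y$ lies in $\psi^{-1}(y)$, this produces at best a partial section $Z\to K$ on those $y$ for which $T^*\delta_y$ has a distinguished atom; it says nothing about points $x\in K$ that are not atoms of any $T^*\delta_y$, and there is no reason $T^*\delta_y$ need have any atoms at all. The appeal to separability of $K$ ``capping total atomic mass'' is not a valid argument: separability imposes no bound on $\sum_y\|\mu^a_y\|$. What the paper proves instead is that $\Psi_T:C(L')\to C(K)$ has \emph{finite-codimensional} range (Corollary~4.12), which translates dually into: $\psi$ is surjective onto $L'$ and all but finitely many fibres are singletons, the exceptional ones being finite. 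This is strictly weaker than injectivity and is what actually yields the finite set $E\subseteq K$; your conclusion $E=\varnothing$ overshoots. The mechanism is quite different from yours: one shows the composite $S=\Psi_T\circ\mathcal R\circ T:C(K)\to C(K)$ equals $M_e+W$ with $W$ weakly compact, and then argues (Proposition~4.11) that $e$ vanishes nowhere---using that $\ker S$ is separable while, since $K$ has no nontrivial convergent sequences, small closed sets in $K$ are non-metrizable and cannot have their points separated by a separable family. Invertibility of $M_e$ then feeds a Fredholm/strictly-singular perturbation lemma (Theorem~4.12) to get finite codimension.

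Your finiteness argument for $L\setminus Z$ is also incomplete. The observation that $\phi(\chi_{\{y_n\}})\in\{0,1\}$ is correct, but in fact $\phi(\chi_{\{y_n\}})=0$ for every isolated $y_n$ (since $M_{\chi_{\{y_n\}}}$ is weakly compact), so no contradiction arises from ``too many idempotents''. One needs a different route through the structure of $C(K)$ to rule out infinitely many isolated points in $L$.
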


 The first example (under the continuum hypothesis) of a Banach space $C(K)$ with few operators appeared in the work of Koszmider (\cite{few-operators-main}). Later, Plebanek showed how to remove the use of {\sf CH} from such constructions (\cite{plebanek-few-operators}). Considered spaces have many interesting properties (cf. \cite[Theorem 13]{few-operators-survey}) e.g. $C(K)$ is indecomposable Banach space, it is not isomorphic to any of its proper subspaces nor any proper quotient, it is a Grothendieck space, $K$ is strongly rigid (i.e. identity and constant functions are the only continuous functions on $K$) and does not include non-trivial convergent sequences. For more examples and properties of Banach spaces $C(K)$ with few operators see \cite{BF2, fajardo, big-densities, few-operators-survey, KMM, koszmider-shelah}.

In the further part of the paper we show how to construct a Banach space $C(K)$ with few operators, where $K$ has arbitrarily given dimension. Theorem \ref{main} is an almost immediate consequence of Theorem \ref{few-operators-modulo-finite} and the following theorem. 

 \begin{reptheorem}[\ref{theorem-construction}]
 Assume $\diamondsuit$.
For each $k>0$ there is a compact Hausdorff, separable, connected space $K$ such that $C(K)$ has few operators and $\dim K=k$.
 \end{reptheorem}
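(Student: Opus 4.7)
The proof will adapt the general $\diamondsuit$-driven inverse-limit template for building $C(K)$ spaces with few operators (as in \cite{few-operators-main, plebanek-few-operators}) to the setting where $K$ must be a separable, connected compactum of exact covering dimension $k$. The plan is to construct a continuous inverse system $\langle K_\alpha, \pi^\beta_\alpha : \alpha \le \beta < \omega_1\rangle$ of separable, connected, compact Hausdorff spaces with surjective bonding maps, and to set $K = \varprojlim_{\alpha < \omega_1} K_\alpha$. The base space will be $K_0 = [0,1]^k$ (separable, connected, of dimension exactly $k$), so that every $K_\alpha$ surjects onto $[0,1]^k$ and therefore has dimension at least $k$ throughout the construction.

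$\diamondsuit$ is used to enumerate candidate operators: at stage $\alpha$ one is handed a putative bounded linear map on a dense countable subset of $C(K_\alpha)$, together with enough combinatorial data to detect whether it already arises from a multiplier modulo a weakly compact perturbation. At a successor stage $\alpha+1$ one must construct a small extension $\pi^{\alpha+1}_\alpha : K_{\alpha+1} \to K_\alpha$ so that, if the guessed operator is not of the form $gI + S$, then no bounded linear operator on $C(K_{\alpha+1})$ extends it. Concretely, from a non-$(gI+S)$ candidate one extracts witnessing pairs of functions on $K_\alpha$ whose simultaneous behaviour is incompatible with the sought form, and then defines $K_{\alpha+1}$ by a local surgery on $K_\alpha$---attaching or identifying suitably chosen small copies of $[0,1]^k$---designed to turn these witnesses into an outright contradiction. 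The local operation must preserve Hausdorffness, compactness, separability, connectedness, and, crucially, covering dimension exactly $k$.

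At limit stages $\alpha$ one takes $K_\alpha = \varprojlim_{\beta < \alpha} K_\beta$. Compactness and Hausdorffness are automatic; connectedness of an inverse limit of connected compacta is standard; separability is arranged by choosing countable dense sets at each stage that map onto dense sets under the bonding maps. The upper bound $\dim K_\alpha \le k$ follows from the classical theorem that the covering dimension of an inverse limit of compact Hausdorff spaces does not exceed the supremum of the dimensions of the factors, and the matching lower bound is forced by the surjection onto $K_0$. Once the construction reaches $\omega_1$, a standard $\diamondsuit$-reflection argument shows that any bounded operator $T : C(K) \to C(K)$ must, on a closed unbounded set of stages, agree with one of the anticipated candidates; any non-$(gI+S)$ candidate was killed at its stage, so $T$ itself must be of the required form.

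The main technical obstacle is the successor-step extension. In the original zero-dimensional constructions one \emph{splits} a point of a Boolean algebra to manufacture new functions, an operation with no direct analogue in the connected higher-dimensional setting. What is needed here is a local modification of a $k$-dimensional compactum that simultaneously supplies the new continuous functions required to destroy a non-multiplier operator \emph{and} preserves the exact dimension $k$. An accidental increase to $k+1$ at a single step, or accumulating to $k+1$ through the limit, would destroy the main theorem; preserving dimension across a cut-and-glue of small $k$-cells will require the sum theorem for covering dimension together with careful control that all attaching maps remain inside $k$-dimensional pieces. Once this geometric extension operation is set up correctly, the rest of the argument---bookkeeping of dense sets, $\diamondsuit$-enumeration of operator fragments, and the final extraction of the $gI + S$ decomposition---is a direct adaptation of the now-standard few-operators machinery.
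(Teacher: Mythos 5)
There are genuine gaps here, and the most serious one is a wrong step rather than a missing detail. You claim the lower bound $\dim K_\alpha\geq k$ is ``forced by the surjection onto $K_0=[0,1]^k$.'' A continuous surjection does not preserve dimension from below: the Cantor set maps onto $[0,1]^k$, so surjectivity of the bonding maps gives you nothing. This is exactly the point where the paper has to work: it introduces \emph{essential-preserving} maps (via essential families of pairs of closed sets and partitions), proves that this property passes through inverse limits (Theorem \ref{property-E}), and proves that \emph{strong} extensions in the sense of \cite{few-operators-main} are essential-preserving (Proposition \ref{extension-property-E}). Without some such mechanism your inverse limit could collapse to dimension $<k$. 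Similarly, your upper bound ``$\dim\leq k$'' correctly invokes the inverse-limit theorem, but only works if each successor step keeps dimension $\leq k$, and your successor step is precisely the part you leave as a placeholder (``local surgery\ldots attaching small copies of $[0,1]^k$''). That placeholder is the heart of the theorem. The paper's solution is not cell-attachment but Koszmider's strong extensions by pairwise disjoint functions $(f_n)$, together with the extra requirement that the exceptional set $K_\alpha\setminus D((f_n))$ be zero-dimensional; this requirement is secured by a measure-theoretic fact (every non-zero Radon measure on a metrizable compactum charges a zero-dimensional compact set, Theorem \ref{measure-zero-dim}, feeding into Proposition \ref{metric-functions}) and then dimension is preserved by the countable sum and product theorems (Lemma \ref{extension-dimension}). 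Nothing in your outline supplies an operation that both kills a bad operator and provably does not raise the dimension.

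Two further points. First, your use of $\diamondsuit$ -- guessing ``a putative bounded linear map on a dense countable subset of $C(K_\alpha)$'' -- is not a stage-$\alpha$ object: $T$ sends such functions into $C(K)$, which is not coded by a subset of $\alpha$, so the guessing as stated does not make sense. The paper instead guesses the dual data: bounded sequences of Radon measures $\mu_n=T^*(\delta_{x_n})$ restricted to $C([0,1]^\alpha)$, plus sequences of basic open sets and indices (Lemma \ref{diamond-lemma}); these restrictions genuinely live at stage $\alpha$ (and this is the reason $\diamondsuit$ rather than {\sf CH} is needed, since there are $2^{\mathfrak c}$ such sequences but only $\mathfrak c$ steps). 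Second, you conflate ``weak multiplier'' with ``weak multiplication'': killing non-weak-multipliers gives $T^*=gI+S$ with $g$ Borel, and to upgrade to $T=gI+S$ with $g$ continuous the paper needs the extra topological property that $\overline{U}\cap\overline{V}$ contains at least two points whenever it is nonempty (condition (5) of Proposition \ref{construction-properties}), which is arranged at the odd steps of the construction; your outline has no counterpart to this, so even granting everything else you would only get ``few operators'' in the weaker, insufficient sense.
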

 
 Our construction is a modification of one of the spaces $K$ from \cite[Theorem 6.1]{few-operators-main}, which is a separable connected compact space such that $C(K)$ has few operators. The original space is constructed as an inverse limit of metrizable compact spaces $(K_\alpha)_{\alpha<\omega_1}$, where on intermediate steps we add suprema to countable families of functions in the lattice $C(K_\alpha)$ for $\alpha<\omega_1$, using the notion of strong extension. However, the considered families of functions are very general, which leads to the problem that described operation may rise the dimension of given space and the final space is infinite-dimensional. We show that under $\diamondsuit$ we are able to limit the choice of functions in the way that we can control the dimension of the spaces at each step. In order to control the dimension we introduce the notion of essential-preserving maps. Similar ideas were studied in Fedorchuk's work (\cite{fedorchuk1, fedorchuk2, fedorchuk3}). For instance, Fedorchuk considered maps that are ring-like, monotonic and surjective, which implies that they are essential-preserving (however, those notions are much stronger and are not applicable in our context). 

One may also consider other notions of dimension such as small or large inductive dimension. However, since Theorem \ref{inverse-dimension} does not work if we replace the covering dimension with one of the inductive dimensions, we do not know if the spaces we constructed have finite inductive dimensions. 

The structure of the paper is as follows. Section \ref{notation} concerns basic terminology. Section \ref{section-covering-dimension} contains necessary results about covering dimension. In section \ref{section-few-operators} we prove Theorem \ref{few-operators-modulo-finite} characterizing properties of spaces $C(K)$ with few operators preserved under isomorphisms. In Section \ref{section-extensions} we develop tools for controlling dimension in some inverse limits of systems of compact spaces. Section \ref{section-construction} contains the description of the construction leading to the main theorem of the paper. The last section includes remarks and open questions.  

\section{Notation and terminology}\label{notation}
Most of notation that we use should be standard. For unexplained terminology check \cite{dimension-theory, banach-space-theory, jech}. 
$\omega$ denotes the set of non-negative integers, which is also the smallest infinite ordinal number. $\omega_1$ is the smallest uncountable ordinal. $Lim$ stands for the class of all limit ordinals. $\odd$ and $\even$ stand for the classes of odd and even ordinals respectively. If $f$ is a function, then $f|A$ denotes the restriction of $f$ to $A$. $\sum_{n\in\omega} f_n$ will always denote the pointwise sum of functions $f_n$ (if the sum exists). $[A]^{<\omega}$ is the family of all finite subsets of $A$. For a topological space $X$, $\dim X$ denotes the covering dimension (also known as Lebesgue covering dimension or topological dimension, cf. \cite[Definition 1.6.7]{dimension-theory}) of $X$. $X'$ stands for the subset of $X$ consisting of non-isolated points in $X$. A sequence $(x_n)_{n\in\omega}$ is said to be non-trivial, if it is not eventually constant. We say that a topological space $X$ is c.c.c. if every family of pairwise disjoint open subsets of $X$ is countable. By basic open subset of $[0,1]^{\omega_1}$ we mean a product  $\prod_{\alpha<\omega_1} U_\alpha$ where each $U_\alpha\subseteq [0,1]$ is a relatively open interval with rational endpoints and $U_\alpha=[0,1]$ for all but finitely many $\alpha$'s. A subset $S\subseteq \omega_1$ is called stationary, if it has non-empty intersection with every closed and unbounded subsets of $\omega_1$. 

All considered topological spaces are Hausdorff. We work with Banach spaces of the form $C(K)$ consisting of real-valued continuous functions on a compact space $K$ equipped with the supremum norm. $C_I(K)$ denotes the subset of $C(K)$ of functions with the range included in $[0,1]$. For Banach spaces $X$ and $Y$, a bounded linear operator $T:X\rightarrow Y$ is said to be weakly compact if the closure of $T[B_X]$ is compact in the weak topology in $Y$ (here $B_X$ stands for the unit ball in $X$). $X\sim Y$ means that $X$ and $Y$ are isomorphic as Banach spaces. $\mathcal{B}(X)$ denotes the algebra of all bounded operators on a Banach space $X$ (with the operator norm). An operator $T:C(K)\rightarrow C(L)$ is multiplicative, if $T(fg)=T(f)T(g)$. We will use one symbol $\| \cdot \|$ to denote norms in all considered Banach spaces - this should not lead to misunderstandings.  

{\sf ZFC} stands for Zermelo-Fraenkel set theory with the axiom of choice. {\sf CH} is the continuum hypothesis i.e. the sentence $2^\omega=\omega_1$. Jensen's diamond principle ($\diamondsuit$) stands for the following sentence (for other equivalent formulations see \cite{devlin-diamond}): there is a sequence of sets $A\subseteq \alpha$ for $\alpha<\omega_1$ such that for any subset $A\subseteq \omega_1$ the set $\{\alpha: A\cap \alpha = A_\alpha\}$ is stationary in $\omega_1$. It is a well-known fact, that $\diamondsuit$ implies {\sf CH}.

\subsection*{Radon measures on compact spaces} 
For a compact space $K$ we will identify the space of bounded linear functionals on $C(K)$ with the space of Radon measures on $K$ (the identification is given by the Riesz representation theorem). For every $\alpha<\omega_1$ we have an embedding $E_\alpha: C([0,1]^\alpha) \rightarrow C([0,1]^{\omega_1})$ given by $E_\alpha(f)= f\circ \pi_\alpha$, where $\pi_\alpha:[0,1]^{\omega_1}\rightarrow [0,1]^\alpha$ is the natural projection. For a Radon measure $\mu$ on $[0,1]^{\omega_1}$ we will denote by $\mu|C([0,1]^\alpha)$ the restriction of $\mu$ treated as a functional on $C([0,1]^{\omega_1})$ to the subspace $E_\alpha[C([0,1]^\alpha)]$. Equivalently, $\mu|C([0,1]^\alpha)$ is a measure on $[0,1]^\alpha$ given by $$\mu|C([0,1]^\alpha)(A)=\mu(\pi_\alpha^{-1}(A)).$$
For any measure $\mu$ we denote by $|\mu|$ its variation. 

\section{Covering dimension}\label{section-covering-dimension}
This section is devoted to the basic properties of covering dimension and its behavior in inverse limits of compact spaces. 
We start with several basic definitions. Recall that for a family $\mathcal{A}$ of sets we define its order as the largest integer $n$ such that $\mathcal{A}$ contains $n+1$ sets with non-empty intersection. If there is no such $n$, then we say that the order of $\mathcal{A}$ is $\infty$.  
\begin{definition}\cite[Definition 1.6.7]{dimension-theory}\label{dim}
Let $X$ be a topological space. We say that covering dimension of $X$ (denoted by $\dim X$) is not greater than $n$, if every finite open cover of $X$ has a finite open refinement of order at most $n$.  We say that $\dim X=n$ if $\dim X \leq n$, but not $\dim X \leq n-1$. If there is no $n$ such that $\dim X=n$, then we say that $\dim X=\infty$.  
\end{definition}

\begin{definition}\cite[Definition 1.1.3]{dimension-theory}\label{partition}
Let $X$ be a topological space. A closed set $P\subseteq X$ is a partition between $A$ and $B$ if there are disjoint open sets $U\supseteq A, V\supseteq B$ such that $X\backslash P=U\cup V$.
\end{definition}

\begin{definition}\cite[p. 16]{dimension-theory-char}\label{essential}
A family $\{(A_i,B_i): i=1,2,\dots, n\}$ of pairs of disjoint closed subsets of a space $X$ is called essential if for every family $\{C_i:i=1,2,\dots,n\}$ such that for each $i\leq n$ the set $C_i$ is a partition between $A_i$ and $B_i$ we have $$\bigcap_{i=1}^n C_i\neq \varnothing.$$
\end{definition}

For the proof of the following theorems see \cite[Lemma 3.2, Theorem 3.3]{dimension-theory-char}.

\begin{theorem}\label{essential-char}
For a normal space $X$ the following conditions are equivalent: 
\begin{enumerate}
    \item a family $\{(A_i,B_i): i=1,2,\dots, n\}$ of pairs of disjoint closed sets is not essential in $X$,
    \item for each $i=1,2,\dots n$ there are disjoint open sets $U_i,V_i$ such that $A_i\subseteq U_i, B_i\subseteq V_i$ and $$\bigcup_{i=1}^n (U_i\cup V_i)=X,$$
    \item for each $i=1,2,\dots n$ there are disjoint closed sets $C_i,D_i$ such that $A_i\subseteq C_i, B_i\subseteq D_i$ and $$\bigcup_{i=1}^n (C_i\cup D_i)=X.$$
\end{enumerate}
\end{theorem}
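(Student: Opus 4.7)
My plan is to verify the three equivalences by unpacking the definition of partition and then applying normality, following the cycle $(1)\Leftrightarrow(2)$ and $(2)\Leftrightarrow(3)$.

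For $(1)\Leftrightarrow(2)$ the argument is essentially definitional. Saying that $\{(A_i,B_i)\}$ is not essential means that one can choose partitions $P_i$ between $A_i$ and $B_i$ with $\bigcap_{i=1}^n P_i=\varnothing$. By Definition \ref{partition} each such $P_i$ has the form $P_i=X\setminus(U_i\cup V_i)$ for some disjoint open $U_i\supseteq A_i$, $V_i\supseteq B_i$. Taking complements, $\bigcap_i P_i=\varnothing$ is equivalent to $\bigcup_i(U_i\cup V_i)=X$, which is precisely condition (2). So this equivalence needs no topology beyond the definitions.

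For $(3)\Rightarrow(2)$ I would apply normality to each index separately: since $C_i$ and $D_i$ are disjoint closed sets, normality produces disjoint open $U_i\supseteq C_i\supseteq A_i$ and $V_i\supseteq D_i\supseteq B_i$. Then $U_i\cup V_i\supseteq C_i\cup D_i$, so $\bigcup_i(U_i\cup V_i)\supseteq\bigcup_i(C_i\cup D_i)=X$, yielding (2).

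The more delicate direction is $(2)\Rightarrow(3)$, because one has to shrink the $2n$ open sets $U_1,V_1,\dots,U_n,V_n$ to closed sets $C_i\subseteq U_i$, $D_i\subseteq V_i$ that still cover $X$ while containing $A_i,B_i$ respectively. The tool is the standard shrinking lemma for normal spaces: every finite open cover $\{W_\alpha\}$ of a normal space admits a closed cover $\{F_\alpha\}$ with $F_\alpha\subseteq W_\alpha$. Applied to the open cover $\{U_i,V_i\}_{i=1}^n$ (which covers $X$ by (2)), this yields closed $C_i'\subseteq U_i$ and $D_i'\subseteq V_i$ with $\bigcup_i(C_i'\cup D_i')=X$. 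I would then set $C_i:=C_i'\cup A_i$ and $D_i:=D_i'\cup B_i$: these are closed as finite unions of closed sets, they still satisfy $C_i\subseteq U_i$ and $D_i\subseteq V_i$ because $A_i\subseteq U_i$ and $B_i\subseteq V_i$, so they are disjoint; they obviously contain $A_i$ and $B_i$; and their union still covers $X$.

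The main obstacle I anticipate is exactly the step in $(2)\Rightarrow(3)$: one must shrink open to closed while preserving both disjointness and the covering property and the original inclusions $A_i\subseteq C_i$, $B_i\subseteq D_i$. The move of enlarging the shrunken sets by $A_i, B_i$ after the shrinking resolves this cleanly, precisely because $A_i$ was disjoint from $V_i$ and $B_i$ was disjoint from $U_i$ from the outset; all other steps are straightforward bookkeeping around the definition of a partition.
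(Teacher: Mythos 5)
Your argument is correct. Note that the paper does not prove this statement itself but cites \cite[Lemma 3.2, Theorem 3.3]{dimension-theory-char}; your proof is the standard one that such a reference would give: $(1)\Leftrightarrow(2)$ is indeed purely definitional (the partitions $P_i$ are exactly the complements $X\setminus(U_i\cup V_i)$, so $\bigcap_i P_i=\varnothing$ translates to the covering condition), $(3)\Rightarrow(2)$ is a direct application of normality to each disjoint pair $C_i,D_i$, and for $(2)\Rightarrow(3)$ the finite shrinking lemma for normal spaces followed by enlarging the shrunken closed sets to $C_i=C_i'\cup A_i$, $D_i=D_i'\cup B_i$ (which stay inside $U_i$, $V_i$ and hence remain disjoint) settles the only delicate point. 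No gaps.
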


\begin{theorem}\label{dimension-low-bound}
For a normal space $X$ the following conditions are equivalent:
\begin{enumerate}
    \item $\dim X\geq n$,
    \item there is an essential family in $X$ consisting of $n$ pairs.
\end{enumerate}
\end{theorem}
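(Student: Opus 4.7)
The plan is to prove both implications via the contrapositive, with Theorem \ref{essential-char} supplying the translation between non-essentiality of a family $\{(A_i, B_i)\}$ and the existence of disjoint open neighbourhoods of the $A_i, B_i$ that jointly cover $X$.

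For $(2) \Rightarrow (1)$, I assume $\dim X \leq n - 1$ and show that any family $\{(A_i, B_i) : i = 1, \ldots, n\}$ of disjoint closed pairs is non-essential. By Urysohn's lemma, pick $f_i : X \to [-1, 1]$ with $f_i \equiv -1$ on $A_i$ and $f_i \equiv 1$ on $B_i$, and form $F = (f_1, \ldots, f_n) : X \to [-1, 1]^n$. The main dimension-theoretic input is the standard approximation theorem: every continuous map from an at most $(n-1)$-dimensional normal space into $[-1,1]^n$ can be uniformly approximated by a map whose image misses the origin. Applying this to $F$ with a perturbation small enough to preserve $g_i(A_i) \subseteq [-1, 0)$ and $g_i(B_i) \subseteq (0, 1]$, the preimages $P_i := g_i^{-1}(0)$ are partitions between $A_i$ and $B_i$ satisfying $\bigcap_i P_i \subseteq G^{-1}(0) = \varnothing$, so the family is not essential.

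For $(1) \Rightarrow (2)$, I again argue contrapositively: assume no essential family of $n$ pairs exists in $X$ and deduce $\dim X \leq n - 1$ by exhibiting, for every finite open cover $\mathcal{U}$ of $X$, an open refinement of order at most $n - 1$. The plan is to use normality and the shrinking lemma to extract from $\mathcal{U}$ a family of $n$ disjoint closed pairs $(A_i, B_i)$, apply the hypothesis to get simultaneous partitions $P_i$ between $A_i$ and $B_i$ with $\bigcap_i P_i = \varnothing$, and then use Theorem \ref{essential-char} to convert these into disjoint open halves $V_i^+, V_i^-$ around each $A_i$ and $B_i$. A careful combinatorial assembly of the $V_i^\pm$, together with the closed shrinking of $\mathcal{U}$, then produces the required refinement; the order bound arises because any $n$ sets chosen from the assembly must contain a ``$+$/$-$ clash'' at some coordinate $i$, forcing their intersection to be empty.

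I expect the main obstacle to lie in the $(1) \Rightarrow (2)$ direction, specifically in the combinatorial reduction: extracting $n$ disjoint closed pairs from an arbitrary finite open cover in a way that lets the resulting partitions be reassembled into a refinement of order $\leq n - 1$ is the delicate step. The $(2) \Rightarrow (1)$ direction is conceptually cleaner, with the entire content packaged into the dimension-theoretic approximation lemma, which itself admits a short inductive proof on $n$ by pushing the map off one coordinate axis at a time while keeping the boundary values on $A_i$ and $B_i$ under control.
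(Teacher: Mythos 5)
You should first note that the paper itself gives no proof of Theorem \ref{dimension-low-bound}: it is the classical Eilenberg--Otto/Hemmingsen characterization of covering dimension and is quoted, together with Theorem \ref{essential-char}, from \cite{dimension-theory-char}. So your attempt has to stand as a self-contained proof of a known theorem, not be measured against an internal argument. Your direction (2)$\Rightarrow$(1) is sound in outline: a perturbation $G$ of $F=(f_1,\dots,f_n)$ of size $<1$ that misses the origin keeps $g_i<0$ on $A_i$ and $g_i>0$ on $B_i$, so the sets $g_i^{-1}(0)$ are partitions with $\bigcap_i g_i^{-1}(0)=G^{-1}(0)=\varnothing$. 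But the approximation lemma you invoke is, for covering dimension of arbitrary normal spaces, essentially Alexandroff's sphere-extension theorem in disguise (one proves it by extending $F$ restricted to $\{x:\|F(x)\|_\infty=\varepsilon\}$ over the closed set $\{x:\|F(x)\|_\infty\le\varepsilon\}$ into the sphere); your closing remark that it ``admits a short inductive proof on $n$'' is plausible only for separable metric spaces and inductive dimension, so as written you are quoting a result of essentially the same depth as the theorem being proved rather than reducing to something elementary.

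The genuine gap is in (1)$\Rightarrow$(2). From inessentiality of every family of $n$ pairs you must produce, for an arbitrary finite open cover, an open refinement of order $\le n-1$, and this is precisely the step you leave as ``a careful combinatorial assembly''. Concretely: (i) the standard route needs the nontrivial reduction (Hemmingsen) that $\dim X\le n-1$ holds iff every open cover by $n+1$ sets has an open shrinking with empty total intersection; you neither cite nor prove it, and without it there is no evident way to extract from a large cover $n$ closed pairs whose associated open halves refine it -- the half $V_i\supseteq B_i=X\setminus O_i$ produced by Theorem \ref{essential-char} is large and refines nothing. (ii) Even for an $(n+1)$-element cover $\{O_1,\dots,O_{n+1}\}$ with closed shrinking $\{F_i\}$ and pairs $(F_i,X\setminus O_i)$, $i\le n$, writing $X\setminus L_i=W_i\cup W_i'$ with $F_i\subseteq W_i$ and $X\setminus O_i\subseteq W_i'$, the naive assembly $\{W_1,\dots,W_n,\ O_{n+1}\setminus\bigcup_{i\le n}F_i\}$ covers $X$ but its total intersection need not be empty; one needs a further twist, e.g.\ replacing the last set by $O_{n+1}\setminus\bigcap_{i\le n}\overline{W_i}$ and using $\overline{W_i}\setminus W_i\subseteq L_i$ together with $\bigcap_{i\le n}L_i=\varnothing$. (iii) Your pigeonhole count is off by one: among the $2n$ halves a ``clash'' is forced only among any $n+1$ distinct members (two must share an index), not among any $n$; order $\le n-1$ requires exactly the $(n+1)$-fold statement, so what you assert is both stronger than needed and false as stated. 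In sum, (2)$\Rightarrow$(1) is acceptable modulo a heavy citation, while (1)$\Rightarrow$(2) is a plan whose essential combinatorial content -- the heart of the classical theorem -- is missing.
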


\begin{definition}
Let $\pi:L\rightarrow K$ be a continuous function between compact Hausdorff spaces. We will say that $\pi$ is essential-preserving if for every family $\{(A_i,B_i): i=1,2,\dots, n\}$ essential in $K$, the family $\{(\pi^{-1}(A_i),\pi^{-1}(B_i)): i=1,2,\dots, n\}$ is essential in $L$. 
\end{definition}

Note that Theorem \ref{dimension-low-bound} immediately implies that if $\pi:L\rightarrow K$ is essential-preserving, then $\dim L\geq \dim K$.

\begin{lemma}\label{inverse-closed-sets}\cite[Lemma 16.1]{dimension-theory-char}
Assume that $K_\gamma$ is an inverse limit of a system $\{K_\alpha:\alpha<\gamma\}$, where $K_\alpha$ are compact Hausdorff spaces. If $A,B$ are closed disjoint subsets of $K_\gamma$ then there is $\alpha<\gamma$ such that $\pi^\gamma_\alpha[A], \pi^\gamma_\alpha[B]$ are disjoint subsets of $K_\alpha$, where $\pi^\gamma_\alpha$ stands for the canonical projection from $K_\gamma$ into $K_\alpha$.
\end{lemma}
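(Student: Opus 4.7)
My plan is to prove the lemma by a compactness argument in the product $A\times B$, proceeding by contradiction. Suppose that for every $\alpha<\gamma$ the projections $\pi^\gamma_\alpha[A]$ and $\pi^\gamma_\alpha[B]$ do meet, and define
\[
C_\alpha \;=\; \{(a,b)\in A\times B : \pi^\gamma_\alpha(a)=\pi^\gamma_\alpha(b)\}.
\]
Each $C_\alpha$ is non-empty by assumption (pick any common value $x_\alpha\in\pi^\gamma_\alpha[A]\cap\pi^\gamma_\alpha[B]$ and pull back preimages in $A$ and $B$), and each $C_\alpha$ is closed in the compact Hausdorff space $A\times B$ because it is the preimage of the diagonal of $K_\alpha$ under the continuous map $(a,b)\mapsto(\pi^\gamma_\alpha(a),\pi^\gamma_\alpha(b))$, and the diagonal is closed since $K_\alpha$ is Hausdorff.

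Next I would show that the family $\{C_\alpha:\alpha<\gamma\}$ is decreasing, hence has the finite intersection property. Indeed, for $\alpha\leq\beta<\gamma$, if $\pi^\gamma_\beta(a)=\pi^\gamma_\beta(b)$, then applying the bonding map $\pi^\beta_\alpha$ yields $\pi^\gamma_\alpha(a)=\pi^\gamma_\alpha(b)$, so $C_\beta\subseteq C_\alpha$. By compactness of $A\times B$ the intersection $\bigcap_{\alpha<\gamma}C_\alpha$ is non-empty; choose $(a,b)$ in it.

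Finally, I would invoke the defining property of the inverse limit: a point of $K_\gamma$ is determined by its coordinates $(\pi^\gamma_\alpha(x))_{\alpha<\gamma}$ (equivalently, $K_\gamma$ sits inside $\prod_{\alpha<\gamma}K_\alpha$ as the thread space, and distinct points have distinct coordinate sequences). Since $\pi^\gamma_\alpha(a)=\pi^\gamma_\alpha(b)$ for all $\alpha<\gamma$, we conclude $a=b$, contradicting $A\cap B=\varnothing$.

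I expect no serious obstacle; the only subtle point is to make sure that the family $\{C_\alpha\}$ is genuinely decreasing, which relies on the compatibility $\pi^\gamma_\alpha=\pi^\beta_\alpha\circ\pi^\gamma_\beta$ of bonding maps with the inverse-limit projections, and that the thread-separation property of the inverse limit really lets us upgrade ``matching projections at every level'' to equality. An alternative route, which I would keep in mind as a sanity check, is to use normality of $K_\gamma$ to separate $A$ and $B$ by disjoint open sets, cover the compact sets $A,B$ by finitely many basic open sets of the form $(\pi^\gamma_\alpha)^{-1}(W)$, and consolidate to a single level; but this approach is slightly delicate unless one knows the projections are surjective, whereas the compactness argument above is clean in full generality.
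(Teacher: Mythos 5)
Your argument is correct. Note that the paper itself gives no proof of this lemma — it is quoted from the cited reference — so there is nothing in-paper to compare against; judged on its own, your compactness argument is a complete and standard proof. The key steps all check out: each $C_\alpha$ is nonempty (pull back a common value of the two projections), closed (preimage of the closed diagonal of $K_\alpha\times K_\alpha$ under a continuous map), and the family is decreasing because $\pi^\gamma_\alpha=\pi^\beta_\alpha\circ\pi^\gamma_\beta$ for $\alpha\leq\beta$; since the index set is a well-ordered (in particular directed) chain, the finite intersection property follows and compactness of $A\times B$ yields a point $(a,b)$ in the total intersection. The final step is legitimate because the inverse limit, realized as the thread space inside $\prod_{\alpha<\gamma}K_\alpha$, has the property that the canonical projections separate points, so $\pi^\gamma_\alpha(a)=\pi^\gamma_\alpha(b)$ for all $\alpha<\gamma$ forces $a=b$, contradicting $A\cap B=\varnothing$. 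Your cautionary remark about the alternative route via normality and basic open sets of the form $(\pi^\gamma_\alpha)^{-1}(W)$ is apt: that version works too (and is closer to how such lemmas are often proved in the dimension-theory literature), but it requires a consolidation step over finitely many indices, whereas your argument needs no surjectivity or cofinality considerations and works in full generality.
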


\begin{theorem}\label{property-E}
Let $\{K_\alpha: \alpha<\gamma\}$ be an inverse system of compact Hausdorff spaces with inverse limit $K_\gamma$ such that for each limit ordinal $\beta<\gamma$, $K_\beta$ is an inverse limit of $\{K_\alpha: \alpha<\beta\}$. Assume that for each $\alpha<\gamma$ the map $\pi^{\alpha+1}_\alpha:K_{\alpha+1}\rightarrow K_\alpha$ is surjective and essential-preserving. Then the canonical projection $\pi^\gamma_1:K_\gamma\rightarrow K_1$ is essential-preserving. In particular $\dim K_\gamma\geq \dim K_1$.   
\end{theorem}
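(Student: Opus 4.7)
My plan is to prove by transfinite induction on $\delta$ with $1 \leq \delta \leq \gamma$ that the canonical projection $\pi^\delta_1: K_\delta \to K_1$ is essential-preserving. The base case $\delta = 1$ is immediate since $\pi^1_1$ is the identity. For the successor case $\delta = \alpha+1$, I would factor $\pi^{\alpha+1}_1 = \pi^\alpha_1 \circ \pi^{\alpha+1}_\alpha$ and observe that the composition of essential-preserving maps is essential-preserving: if $\{(A_i, B_i)\}$ is essential in $K_1$, then $\{((\pi^\alpha_1)^{-1}(A_i), (\pi^\alpha_1)^{-1}(B_i))\}$ is essential in $K_\alpha$ by the inductive hypothesis, and then $\{((\pi^{\alpha+1}_1)^{-1}(A_i), (\pi^{\alpha+1}_1)^{-1}(B_i))\}$ is essential in $K_{\alpha+1}$ by the hypothesis on $\pi^{\alpha+1}_\alpha$. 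So the successor step is formal.

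The real work lies at limit ordinals $\delta$. Suppose for contradiction that $\{(A_i, B_i)\}_{i=1}^n$ is essential in $K_1$ but its preimage under $\pi^\delta_1$ is not essential in $K_\delta$. By Theorem \ref{essential-char}(3), there exist pairs of disjoint closed sets $C_i, D_i \subseteq K_\delta$ with $(\pi^\delta_1)^{-1}(A_i) \subseteq C_i$, $(\pi^\delta_1)^{-1}(B_i) \subseteq D_i$, and $\bigcup_{i=1}^n (C_i \cup D_i) = K_\delta$. Applying Lemma \ref{inverse-closed-sets} to each of the finitely many pairs $(C_i, D_i)$, I obtain ordinals $\alpha_i < \delta$ with $\pi^\delta_{\alpha_i}[C_i] \cap \pi^\delta_{\alpha_i}[D_i] = \varnothing$, and I set $\alpha = \max_{i \leq n} \alpha_i < \delta$. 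The sets $C_i^\alpha := \pi^\delta_\alpha[C_i]$ and $D_i^\alpha := \pi^\delta_\alpha[D_i]$ are then closed (as continuous images of compact sets into Hausdorff spaces) and pairwise disjoint for each $i$.

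Next I would verify that $C_i^\alpha, D_i^\alpha$ witness that the preimage family is not essential in $K_\alpha$. This uses surjectivity of $\pi^\delta_\alpha$, which follows from the hypothesis that every successor bonding map $\pi^{\beta+1}_\beta$ is surjective by a standard compactness argument for inverse limits of compact Hausdorff spaces. Given $y \in (\pi^\alpha_1)^{-1}(A_i)$, surjectivity lets me pick $x \in K_\delta$ with $\pi^\delta_\alpha(x) = y$; then $\pi^\delta_1(x) = \pi^\alpha_1(y) \in A_i$, so $x \in C_i$ and $y \in C_i^\alpha$. The analogous argument yields $(\pi^\alpha_1)^{-1}(B_i) \subseteq D_i^\alpha$, and covering $K_\alpha = \bigcup_i(C_i^\alpha \cup D_i^\alpha)$ follows by lifting an arbitrary $y \in K_\alpha$ to some $x \in K_\delta$ and using the covering property there. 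Theorem \ref{essential-char} then gives that $\{((\pi^\alpha_1)^{-1}(A_i), (\pi^\alpha_1)^{-1}(B_i))\}$ is not essential in $K_\alpha$, contradicting the induction hypothesis applied at level $\alpha$. The final sentence of the theorem then follows from the remark after the definition of essential-preserving combined with Theorem \ref{dimension-low-bound}.

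The main obstacle is the limit step: one has to push the witness of non-essentialness down from $K_\delta$ to some single intermediate level $K_\alpha$ without destroying either the disjointness of the $C_i^\alpha, D_i^\alpha$ or the containment of the preimages. The disjointness is exactly what Lemma \ref{inverse-closed-sets} provides pair by pair (which is why it matters that $n$ is finite, so that a single $\alpha$ works simultaneously), and the containment and covering relations are rescued by surjectivity, which lets one transport points of $K_\alpha$ back up to $K_\delta$ where the original witness lives.
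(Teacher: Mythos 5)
Your proposal is correct and follows essentially the same route as the paper's proof: transfinite induction with the successor step handled by composing essential-preserving maps, and the limit step by pushing a witness of non-essentiality (via Theorem \ref{essential-char} and Lemma \ref{inverse-closed-sets}, taking the maximum of finitely many ordinals) down to an intermediate level, using surjectivity of the projection to preserve the containments and the covering. Your write-up even makes explicit the lifting argument for the containments that the paper only states.
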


\begin{proof}
We will prove by induction on $\alpha$ that $\pi^\alpha_1:K_\alpha\rightarrow K_1$ is essential-preserving. For successor ordinal $\alpha+1$ it is enough to observe that if $\{(A_i,B_i): i=1,\dots, n\}$ is essential in $K_1$, then $\{((\pi^\alpha_1)^{-1}(A_i),(\pi^\alpha_1)^{-1}(B_i)): i=1,\dots, n\}$ is essential in $K_\alpha$ and hence
 $\{((\pi^{\alpha+1}_1)^{-1}(A_i),(\pi^{\alpha+1}_1)^{-1}(B_i)): i=1,\dots, n\} = \{((\pi^{\alpha+1}_\alpha)^{-1}((\pi^\alpha_1)^{-1}(A_i)),\\ (\pi^{\alpha+1}_\alpha)^{-1}((\pi^\alpha_1)^{-1}(B_i))): i=1,\dots, n\}$ 
is essential in $K_{\alpha+1}$. 

Let $\alpha$ be a limit ordinal and that for each $\beta<\alpha$ the map $\pi^\beta_1:K_\beta \rightarrow K_1$ is essential-preserving. Let $\{(A_i,B_i): i=1,\dots, n\}$ be an essential family in $K_1$ and assume that $\{(\pi^\alpha_1)^{-1}(A_i),(\pi^\alpha_1)^{-1}(B_i)): i=1,\dots, n\}$ is not essential in $K_\alpha$. Then by Theorem \ref{essential-char} for each $i\leq n$ there are closed disjoint sets $C_i\supseteq (\pi^\alpha_1)^{-1}(A_i), D_i\supseteq (\pi^\alpha_1)^{-1}(B_i)$ such that $$\bigcup_{i=1}^n (C_i\cup D_i)=K_\alpha.$$
By Lemma \ref{inverse-closed-sets} for each $i$ there is $\beta_i<\alpha$ such that $\pi^\alpha_{\beta_i}[C_i],\pi^\alpha_{\beta_i}[D_i]$ are disjoint subsets of $K_{\beta_i}$. In particular $\pi^\alpha_\beta[C_i],\pi^\alpha_\beta[D_i]$ are disjoint closed subsets of $K_\beta$, where $\beta=\max\{\beta_i:i\leq n\}$. Since $K_\alpha$ is an inverse limit of surjective maps $\pi^\alpha_\beta$ is also surjective and so 
$$\bigcup_{i=1}^n (\pi^\alpha_\beta[C_i]\cup \pi^\alpha_\beta[D_i])=K_\beta.$$
Moreover, $(\pi^\beta_1)^{-1}(A_i)\subseteq \pi^\alpha_\beta[C_i]$ and $(\pi^\beta_1)^{-1}(B_i)\subseteq \pi^\alpha_\beta[D_i]$, so $\{(\pi^\beta_1)^{-1}(A_i), \pi^\alpha_\beta[C_i] : i\leq n \}$ is not essential in $K_\beta$ which contradicts the inductive assumption. 
\end{proof}

We will need some basic but important properties of the covering dimension.

\begin{theorem}\label{closed-subspace}\cite[Theorem 3.1.3]{dimension-theory}
If $M$ is a closed subspace of a normal space $X$, then $\dim M\leq \dim X$.
\end{theorem}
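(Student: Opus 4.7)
The plan is to use the definition of covering dimension directly (Definition \ref{dim}), reducing a cover of the closed subspace $M$ to a cover of the ambient space $X$ by throwing in the complement of $M$.

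Concretely, I would first set $n = \dim X$ and let $\mathcal{U} = \{U_1, \ldots, U_k\}$ be an arbitrary finite open cover of $M$. Since each $U_i$ is open in the subspace topology, I choose open sets $V_i \subseteq X$ with $U_i = V_i \cap M$. The idea is to enlarge this to a cover of $X$ by appending $V_{k+1} := X \setminus M$, which is open because $M$ is closed. Then $\mathcal{V} := \{V_1, \ldots, V_k, V_{k+1}\}$ is a finite open cover of $X$, so by $\dim X \leq n$ there is a finite open refinement $\mathcal{W}$ of $\mathcal{V}$ of order at most $n$.

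Next I would pull $\mathcal{W}$ back to $M$: set $\mathcal{W}' = \{W \cap M : W \in \mathcal{W}\}$. Each element of $\mathcal{W}$ lies in some $V_i$; if $i \leq k$, then $W \cap M \subseteq V_i \cap M = U_i$, and if $i = k+1$, then $W \cap M = \varnothing$ and this member can be discarded. So $\mathcal{W}'$ (with empty sets removed) is a finite open refinement of $\mathcal{U}$ in $M$. Finally, its order is bounded by the order of $\mathcal{W}$, because any $n+2$ members of $\mathcal{W}'$ with a common point in $M$ come from $n+2$ members of $\mathcal{W}$ with a common point in $X$, which cannot happen. Therefore $\dim M \leq n$.

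There is really no hard step here; the only thing to watch is that the trick of adding $X \setminus M$ requires $M$ to be closed (so that the added set is open) and that the refinement/order behave correctly under intersection with $M$. Normality of $X$ is not strictly used in this direct argument, although it is usually included in the statement to make the notion of covering dimension well-behaved and to match the framework of \cite{dimension-theory}.
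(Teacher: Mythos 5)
Your argument is correct and is exactly the standard proof of this fact (the paper itself does not prove it but cites \cite[Theorem 3.1.3]{dimension-theory}, where the same extension-by-$X\setminus M$ trick is used); the only trivial point to add is that when $\dim X=\infty$ there is nothing to prove, so one may assume $n=\dim X$ is finite as you do. Your remark that normality is not needed for this particular direction is also accurate.
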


\begin{theorem}\label{countable-sum-theorem}\cite[Theorem 3.1.8]{dimension-theory}
Let $n\in\omega\cup \{\infty\}$.
If a normal space $X$ is a union of countably many closed subspaces $\{F_i\}_{i\in\omega}$ with $\dim F_i\leq n$, then $\dim X \leq n$.
\end{theorem}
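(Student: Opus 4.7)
The plan is to apply the essential-family characterization of covering dimension (Theorems \ref{dimension-low-bound} and \ref{essential-char}) to reduce the claim to a separation statement. The case $n = \infty$ is trivial since $\dim X \leq \infty$ always holds, so assume $n \in \omega$. It then suffices to prove that every family $\{(A_j, B_j) : j = 1, \dots, n+1\}$ of $n+1$ disjoint closed pairs in $X$ fails to be essential, i.e.\ that there exist disjoint open sets $U_j \supseteq A_j$, $V_j \supseteq B_j$ with $X = \bigcup_{j=1}^{n+1}(U_j \cup V_j)$.

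The main building block is the following \emph{approximation lemma}: if $F \subseteq X$ is closed with $\dim F \leq n$ and $(P_j, Q_j)_{j=1}^{n+1}$ are disjoint closed pairs in $X$, then there exist open sets $U_j \supseteq P_j$, $V_j \supseteq Q_j$ with $\overline{U_j} \cap \overline{V_j} = \emptyset$ and $F \subseteq \bigcup_j (U_j \cup V_j)$. To prove it, I apply Theorem \ref{essential-char} inside $F$ to the family $\{(P_j \cap F, Q_j \cap F)\}$, which is not essential because $\dim F \leq n$, producing disjoint closed sets $C_j', D_j' \subseteq F$ with $P_j \cap F \subseteq C_j'$, $Q_j \cap F \subseteq D_j'$, and $F = \bigcup_j (C_j' \cup D_j')$. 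Then $P_j \cup C_j'$ and $Q_j \cup D_j'$ are closed and disjoint in $X$ (one uses $C_j' \cap Q_j = C_j' \cap F \cap Q_j \subseteq C_j' \cap D_j' = \emptyset$ since $C_j' \subseteq F$), so by normality of $X$ they can be separated by disjoint open sets, and a second application of normality shrinks these to yield open sets whose closures are still disjoint.

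The proof then proceeds by induction on $i \in \omega$, constructing open sets $U_j^i, V_j^i$ satisfying $A_j \subseteq U_j^i$, $B_j \subseteq V_j^i$, $\overline{U_j^i} \cap \overline{V_j^i} = \emptyset$, $\overline{U_j^{i-1}} \subseteq U_j^i$, $\overline{V_j^{i-1}} \subseteq V_j^i$, and $F_0 \cup \dots \cup F_i \subseteq \bigcup_j (U_j^i \cup V_j^i)$. The inductive step applies the approximation lemma to $F_{i+1}$ with the pairs $(A_j \cup \overline{U_j^i}, B_j \cup \overline{V_j^i})$; these are disjoint precisely thanks to the closure-separation invariant. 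Taking $U_j = \bigcup_i U_j^i$ and $V_j = \bigcup_i V_j^i$ produces open supersets of $A_j, B_j$ that cover $X = \bigcup_i F_i$. Their disjointness follows from the nesting $\overline{U_j^i} \subseteq U_j^{i+1}$: any point in $U_j \cap V_j$ would lie in some $U_j^i \cap V_j^k$, hence in $U_j^{\max(i,k)} \cap V_j^{\max(i,k)} = \emptyset$, a contradiction.

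The principal technical obstacle is maintaining the strengthened invariant $\overline{U_j^i} \cap \overline{V_j^i} = \emptyset$ rather than mere set-theoretic disjointness: without it the closed pair $(\overline{U_j^i}, \overline{V_j^i})$ could fail to be disjoint and would not serve as admissible input to the approximation lemma at the next stage. The double use of normality of $X$ inside the lemma is precisely what makes this stronger invariant propagate through the countable induction.
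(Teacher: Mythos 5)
Your proof is correct, and all the ingredients you invoke are already in the paper's toolkit. Note, though, that the paper itself does not prove this statement at all: it is quoted from Engelking's \emph{Dimension Theory} (Theorem 3.1.8), where the textbook proof runs through a different characterization of dimension (extension of partitions/mappings into spheres, via the lemma the paper cites separately as Lemma 3.1.6). Your route instead reduces everything to the Eilenberg--Otto essential-family characterization (Theorems \ref{dimension-low-bound} and \ref{essential-char}), proves a clean approximation lemma for a single closed subspace of dimension at most $n$, and then runs a countable induction in which the invariant $\overline{U_j^i}\cap\overline{V_j^i}=\varnothing$ (obtained by using normality of $X$ twice) is exactly what allows the pairs $(A_j\cup\overline{U_j^i},\,B_j\cup\overline{V_j^i})$ to be fed back into the lemma; the union $U_j=\bigcup_i U_j^i$, $V_j=\bigcup_i V_j^i$ then witnesses inessentiality of any $n+1$ pairs, giving $\dim X\leq n$ by the contrapositive of Theorem \ref{dimension-low-bound}. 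The details check out (disjointness of $P_j\cup C_j'$ and $Q_j\cup D_j'$, monotonicity $U_j^i\subseteq\overline{U_j^i}\subseteq U_j^{i+1}$, and the covering of each $F_0\cup\dots\cup F_i$ all work as you state), so what your argument buys is a self-contained proof phrased entirely in the language of essential families that Section \ref{section-covering-dimension} already uses, at the cost of re-deriving a standard textbook theorem rather than citing it.
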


\begin{theorem}\label{product-dim}\cite[Theorem 3.2.13]{dimension-theory}
If $X,Y$ are non-empty compact Hausdorff spaces, then $\dim(X\times Y)\leq \dim X + \dim Y$.
\end{theorem}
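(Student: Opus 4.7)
The plan is to pivot to the characterization of $\dim$ via essential families (Theorem \ref{dimension-low-bound}): setting $m=\dim X$ and $n=\dim Y$, it suffices to prove that no family of $m+n+1$ pairs of disjoint closed sets in $X\times Y$ can be essential.

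As a warm-up, it is instructive to see why the direct cover-refinement approach is not sharp. Given a finite open cover $\mathcal{U}$ of $X\times Y$, the tube lemma together with compactness produces finite open covers $\mathcal{V}$ of $X$ and $\mathcal{W}$ of $Y$ such that every rectangle $V\times W$ with $V\in\mathcal{V}$ and $W\in\mathcal{W}$ sits inside some $U\in\mathcal{U}$. Using $\dim X\leq m$ and $\dim Y\leq n$ one can refine to $\mathcal{V}'$ of order $\leq m$ and $\mathcal{W}'$ of order $\leq n$, but the naive product $\{V\times W:V\in\mathcal{V}',W\in\mathcal{W}'\}$ only achieves order $(m+1)(n+1)-1$. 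This is too weak by a multiplicative factor, so one really does need the essential-family formulation or a staggered construction.

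Adopting the essential-family perspective, given disjoint closed pairs $(P_i,Q_i)$ in $X\times Y$ for $i=1,\dots,m+n+1$, the aim is to produce partitions $C_i$ between $P_i$ and $Q_i$ with $\bigcap_i C_i=\varnothing$. I would argue fiberwise: for each $y\in Y$, the restrictions $P_i\cap(X\times\{y\})$ and $Q_i\cap(X\times\{y\})$ of the first $m+1$ pairs are disjoint closed subsets of a copy of $X$, and since $\dim X=m$, Theorem \ref{essential-char} yields partitions within the fiber whose intersection is empty. By the tube lemma these fiberwise partitions extend to closed sets in some neighborhood $X\times U_y$; passing to a finite subcover of $Y$ by such $U_y$ and then invoking $\dim Y=n$ on the nerve of this cover should handle the remaining $n$ pairs.

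The main obstacle is gluing the fiberwise partitions into genuinely global partitions on $X\times Y$. In the non-metrizable compact Hausdorff setting, this requires iterated use of normality and Theorem \ref{essential-char}, alternately swelling open separations into closed ones and shrinking back, while matching overlaps between neighborhoods $X\times U_y$. It is precisely this technical bookkeeping that makes the classical Hurewicz-Engelking argument subtle, and the reason the statement is cited from \cite[Theorem 3.2.13]{dimension-theory} rather than proved here.
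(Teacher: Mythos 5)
There is a genuine gap: what you have written is a strategy sketch, not a proof, and you in fact concede this in your last paragraph. (For calibration: the paper itself gives no proof of this statement -- it is quoted from Engelking -- so the only question is whether your argument stands on its own, and it does not.) The reduction via Theorem \ref{dimension-low-bound} is fine, and the fiberwise step is sound as far as it goes: for fixed $y$, the restrictions of the first $m+1$ pairs to $X\times\{y\}$ cannot be essential since $\dim X=m$, and Theorem \ref{essential-char} plus the tube lemma does let you thicken the resulting separations to a tube $X\times U_y$. But from there two essential ingredients are missing. First, the local separations over the finitely many tubes are never assembled into actual global partitions $C_i\subseteq X\times Y$ between $P_i$ and $Q_i$; gluing them across overlapping tubes (while keeping disjointness and the covering condition of Theorem \ref{essential-char}) is precisely the content of the theorem, and no construction is offered.

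Second, and more seriously, the plan for the last $n$ pairs does not make sense as stated: those are arbitrary pairs of disjoint closed subsets of $X\times Y$, not preimages of pairs in $Y$, so ``invoking $\dim Y=n$ on the nerve of the cover of $Y$'' produces no partitions for them. What the fiberwise argument can plausibly deliver is that the first $m+1$ partitions can be chosen so that their intersection is controlled by the projection $\pi:X\times Y\to Y$ (this is in essence the Hurewicz theorem on dimension-lowering closed maps, of which the product theorem is a corollary); only then can $\dim Y\leq n$ be brought to bear on the remaining $n$ pairs, via partitions chosen inside that residual set. Nothing in your sketch supplies this mechanism, so the argument does not close; as written it justifies the citation to \cite[Theorem 3.2.13]{dimension-theory} rather than replacing it.
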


\begin{theorem}\label{inverse-dimension}\cite[Theorem 3.4.11]{dimension-theory}
If $K$ is an inverse limit of compact Hausdorff spaces of dimension at most $n$, then $\dim K \leq n$.
\end{theorem}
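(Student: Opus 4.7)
The plan is to reduce an arbitrary finite open cover of the inverse limit $K$ to a cover pulled back from a single stage $K_\alpha$ in the inverse system, and then use the hypothesis $\dim K_\alpha\leq n$ on that stage.

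First I would use the structural fact that the topology on an inverse limit has a subbasis consisting of preimages $\pi_\alpha^{-1}(V)$ with $V\subseteq K_\alpha$ open and $\pi_\alpha\colon K\to K_\alpha$ the canonical projection; combined with directedness of the index set, a basis is formed by sets of the form $\pi_\alpha^{-1}(V)$ (each individual basic set coming from one common $\alpha$). Starting from a finite open cover $\mathcal U=\{U_1,\dots,U_m\}$ of $K$, refine each $U_j$ by such basic sets; by compactness of $K$ finitely many basic sets already cover, and by directedness they may all be pulled back from a single common index $\alpha$. So the problem reduces to a cover of the form $\{\pi_\alpha^{-1}(V_1),\dots,\pi_\alpha^{-1}(V_k)\}$, where $\{V_1,\dots,V_k\}$ is a finite family of open subsets of $K_\alpha$ whose union contains $\pi_\alpha[K]$.

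Next, the image $\pi_\alpha[K]$ is a closed subset of $K_\alpha$, so Theorem \ref{closed-subspace} gives $\dim \pi_\alpha[K]\leq \dim K_\alpha\leq n$. Therefore the induced open cover $\{V_i\cap \pi_\alpha[K]\}$ of $\pi_\alpha[K]$ has a finite open refinement $\{W_1\cap \pi_\alpha[K],\dots,W_r\cap \pi_\alpha[K]\}$ of order at most $n$, where each $W_j\subseteq K_\alpha$ is open. Pulling back, $\{\pi_\alpha^{-1}(W_j): j\leq r\}$ is a finite open refinement of $\mathcal U$, and because every $x\in K$ satisfies $\pi_\alpha(x)\in\pi_\alpha[K]$, we have $\bigcap_{j\in S}\pi_\alpha^{-1}(W_j)\neq\varnothing$ if and only if $\bigcap_{j\in S}(W_j\cap \pi_\alpha[K])\neq\varnothing$; hence the order is preserved and the refinement has order at most $n$.

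The main subtlety I expect is that $\pi_\alpha[K]$ need not coincide with $K_\alpha$, so one cannot directly apply $\dim K_\alpha\leq n$ to the cover $\{V_i\}$; the detour through Theorem \ref{closed-subspace} (or, equivalently, through an argument that enlarges the refining sets inside $K_\alpha$ while controlling order only over $\pi_\alpha[K]$) is the single nontrivial point. Once that is handled, the remainder is a standard combination of compactness of $K$, directedness of the index set, and the definition of the inverse-limit topology.
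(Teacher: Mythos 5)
Your proof is correct: the paper itself gives no argument for this statement (it is quoted from Engelking's book, \cite[Theorem 3.4.11]{dimension-theory}), and your reduction of an arbitrary finite open cover of $K$ to one pulled back from a single stage, followed by applying Theorem \ref{closed-subspace} to the closed set $\pi_\alpha[K]\subseteq K_\alpha$ and transferring the order of the refinement through $\pi_\alpha^{-1}$, is essentially the standard textbook proof. You correctly identified the one nontrivial point, namely that $\pi_\alpha[K]$ need not be all of $K_\alpha$, and handled it properly.
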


\begin{definition}\cite[p. 170]{dimension-theory}
Let $A$ be a subspace of a space $X$. We define the relative dimension of $A$ as
$$ \rd_X A =\sup \{ \dim F: F\subseteq A, F \ \text{closed in} \ X \}. $$
\end{definition}

\begin{lemma}\label{fin-points}
Let $n\in\omega\cup \{\infty\}$.
Assume that a normal space $X$ can be represented as a union $U\cup F$ where $F$ is finite and $\rd_X U\leq n$. Then $\dim X \leq n$.  
\end{lemma}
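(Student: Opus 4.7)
The case $n = \infty$ is vacuous, so assume $n$ is a natural number. The plan is to verify the essential-family characterization (Theorem \ref{dimension-low-bound}): it suffices to show that no family $\{(A_i, B_i) : i = 1, \dots, n+1\}$ of disjoint closed pairs in $X$ is essential. By Theorem \ref{essential-char}(3), this amounts to producing closed disjoint $C_i \supseteq A_i$ and $D_i \supseteq B_i$ with $\bigcup_i (C_i \cup D_i) = X$.

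The key idea is to decide in advance which side of each partition each point of $F$ will belong to, and then push the essentiality question into a closed subset of $U$. For each $p \in F$ and each $i$ fix a sign $\epsilon_i(p) \in \{+,-\}$, forced to $+$ if $p \in A_i$ and to $-$ if $p \in B_i$, and chosen arbitrarily otherwise. Using normality of $X$ and finiteness of $F$, I would then choose pairwise disjoint open neighborhoods $W_p$ of the points $p \in F$ small enough that $\overline{W_p} \cap A_i = \emptyset$ whenever $\epsilon_i(p) = -$ and $\overline{W_p} \cap B_i = \emptyset$ whenever $\epsilon_i(p) = +$; this is possible because $p$ avoids every such $A_i$ and $B_i$. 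Setting $Y := X \setminus \bigcup_{p \in F} W_p$, we obtain a closed subset of $X$ disjoint from $F$, so $Y \subseteq U$ and hence $\dim Y \leq \rd_X U \leq n$. Note also that $\partial W_p \subseteq Y$ for every $p$, since the $\overline{W_p}$'s are pairwise disjoint.

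The next step is to feed the sign assignment into the dimension hypothesis on $Y$. Define the enlarged closed sets $\tilde{A}_i := (A_i \cap Y) \cup \bigcup_{\epsilon_i(p) = +} \partial W_p$ and $\tilde{B}_i := (B_i \cap Y) \cup \bigcup_{\epsilon_i(p) = -} \partial W_p$. A routine check --- using the pairwise disjointness of the $\overline{W_p}$'s, the fact that a single $p$ admits only one sign, and the way the $W_p$ were shrunk --- shows that $\tilde{A}_i$ and $\tilde{B}_i$ are disjoint closed in $Y$. Since $\dim Y \leq n$ but there are $n+1$ pairs, Theorems \ref{dimension-low-bound} and \ref{essential-char} yield closed disjoint $C_i^Y, D_i^Y \subseteq Y$ with $\tilde{A}_i \subseteq C_i^Y$, $\tilde{B}_i \subseteq D_i^Y$ and $\bigcup_i (C_i^Y \cup D_i^Y) = Y$.

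To finish, set $C_i := C_i^Y \cup \bigcup_{\epsilon_i(p) = +} \overline{W_p}$ and $D_i := D_i^Y \cup \bigcup_{\epsilon_i(p) = -} \overline{W_p}$. Closedness, the inclusions $A_i \subseteq C_i$ and $B_i \subseteq D_i$, and the covering equality $\bigcup_i (C_i \cup D_i) = Y \cup \bigcup_p \overline{W_p} = X$ are straightforward. The step I expect to be the main obstacle, and the reason the enlargement of $\tilde{A}_i$ and $\tilde{B}_i$ was made in the first place, is the disjointness $C_i \cap D_i = \emptyset$: a cross term such as $C_i^Y \cap \overline{W_p}$ with $\epsilon_i(p) = -$ lies in $Y \cap \overline{W_p} = \partial W_p$, and $\partial W_p$ was deliberately inserted into $\tilde{B}_i \subseteq D_i^Y$, so that the whole term is absorbed into $C_i^Y \cap D_i^Y = \emptyset$. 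The only topological input beyond the hypotheses on $X$ is the normality used to produce the shrinking $W_p$ of $F$.
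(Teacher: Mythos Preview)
Your argument is correct. The paper, however, takes a much shorter route: it simply observes that the lemma is a special case of \cite[Lemma 3.1.6]{dimension-theory}, which states that if $X=\bigcup_{k\in\omega} F_k$ with each partial union $\bigcup_{i\le k} F_i$ closed in $X$ and $\rd_X F_k\le n$, then $\dim X\le n$. One sets $F_0=F$ (finite, hence closed with $\rd_X F\le 0$), $F_1=U$, and $F_k=\varnothing$ for $k\ge 2$, and the result drops out.

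Your approach is genuinely different: rather than citing an external countable-sum--type lemma, you verify the essential-family criterion directly, using only Theorems~\ref{essential-char} and~\ref{dimension-low-bound} already stated in the paper. The sign assignment $\epsilon_i(p)$ and the trick of thickening the boundaries $\partial W_p$ into $\tilde A_i,\tilde B_i$ so that the cross terms $C_i^Y\cap\overline{W_p}$ collapse is exactly the right mechanism, and you have identified the one delicate point (the disjointness of $C_i$ and $D_i$) correctly. The payoff is a fully self-contained proof within the paper's framework; the cost is a page of point-set topology in place of a one-line citation. One minor remark: you implicitly use that singletons are closed (to separate the points of $F$ from one another and from the relevant $A_i,B_i$), which is fine since in this paper ``normal'' follows Engelking's convention and includes $T_1$.
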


\begin{proof}
This is a special case of \cite[Lemma 3.1.6]{dimension-theory} (which says that if $X=\bigcup_{i=0}^\infty F_i$ and for each $k\in\omega$ the subspace $\bigcup_{i=0}^k F_i$ is closed in $X$, and $\rd_X F_k\leq n$, then $\dim X\leq n$) where $F_0=F, F_1=U$ and $F_n =\varnothing$ for $n>1$.
\end{proof}

\begin{theorem}\label{dim-equal}
Assume that compact Hausdorff spaces $X$ and $Y$ can be represented as $X=U\cup F, Y=V\cup E$ where $U,V$ are open, $E,F$ are finite, $U\cap F= V\cap E=\varnothing$ and $U$ is homeomorphic to $V$. Then $\dim X = \dim Y$. 
\end{theorem}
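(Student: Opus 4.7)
The plan is to reduce everything to Lemma \ref{fin-points}: by symmetry it suffices to show $\dim X \leq \dim Y$, and this follows from that lemma applied to the decomposition $X = U \cup F$ once I verify that $\rd_X U \leq \dim Y$. The reverse inequality $\dim Y \leq \dim X$ will then follow by the identical argument with the roles of $(X,U,F)$ and $(Y,V,E)$ exchanged and $h^{-1}$ used in place of a fixed homeomorphism $h\colon U \to V$.

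The work is therefore to bound $\rd_X U$. First I would take an arbitrary closed (in $X$) subset $C \subseteq U$. Since $X$ is compact Hausdorff and $C$ is closed, $C$ is compact, so its image $h(C) \subseteq V$ under a fixed homeomorphism $h\colon U \to V$ is compact, hence closed in the Hausdorff space $Y$. The restriction $h|C$ is a homeomorphism onto $h(C)$, and covering dimension is a topological invariant (immediate from Definition \ref{dim}), so $\dim C = \dim h(C)$. Applying Theorem \ref{closed-subspace} in $Y$ to the closed set $h(C)$ gives $\dim h(C) \leq \dim Y$, and taking the supremum over all eligible $C$ produces $\rd_X U \leq \dim Y$.

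At that point Lemma \ref{fin-points} applied with $n = \dim Y$ to $X = U \cup F$ (recall $F$ is finite and disjoint from $U$) yields $\dim X \leq \dim Y$, completing one direction. I do not see a genuine obstacle in this argument; the only point that requires a moment of thought is the observation that a closed subset of $X$ contained in the open set $U$ is automatically compact, which is what allows us to transport it across the homeomorphism of the open parts and land on a closed (rather than merely relatively closed) subset of $Y$.
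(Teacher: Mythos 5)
Your proof is correct and follows essentially the same route as the paper: both hinge on Lemma \ref{fin-points} together with Theorem \ref{closed-subspace}, plus the observation that a subset of $U$ closed in $X$ is compact and therefore transfers under the homeomorphism to a closed subset of $Y$. The only cosmetic difference is that you prove the two inequalities $\dim X\leq\dim Y$ and $\dim Y\leq\dim X$ by symmetry, whereas the paper packages the same ingredients as the chain of equalities $\dim X=\rd_X U=\rd_Y V=\dim Y$.
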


\begin{proof}
By Theorem \ref{closed-subspace} we have $\rd_X U \leq \dim X$ and by Lemma \ref{fin-points} $\dim X \leq \rd_X U$, so $\dim X=\rd_X U$. By the same argument $\dim Y = \rd_Y V$. Since $X, Y$ are compact we have 
$$\rd_X U= \sup \{ \dim F: F\subseteq U, F \ \text{compact}\} $$
and 
$$\rd_Y V= \sup \{ \dim F: F\subseteq V, F \ \text{compact}\}. $$
But $U$ and $V$ are homeomorphic, so every compact subset of $U$ is homeomorphic to some compact subset of $V$ and vice versa, and hence $\rd_X U=\rd_Y V$. This gives $\dim X= \rd_X U= \rd_Y V= \dim Y$. 
\end{proof}

\begin{theorem}\label{measure-zero-dim}
Suppose that $K$ is a metrizable compact space and $\mu$ is a non-zero Radon measure on $K$. Then there is a compact zero-dimensional subset $Z\subseteq K$ such that $\mu(Z)\neq 0$. 
\end{theorem}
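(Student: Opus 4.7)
The plan is to reduce to the case where $\mu$ is a positive finite nonzero Radon measure, embed $K$ into the Hilbert cube $[0,1]^\omega$, and construct $Z$ as the intersection $\bigcap_n \pi_n^{-1}(C_n)$ where each $C_n \subseteq [0,1]$ is compact, zero-dimensional, and of nearly full $\nu_n$-measure for the pushforward $\nu_n = (\pi_n)_\ast \mu$.

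First I would reduce to the positive case. Using the Jordan decomposition $\mu = \mu^+ - \mu^-$, at least one of $\mu^\pm$ is nonzero, say $\mu^+$. Pick a positive Borel set $P$ from the Hahn decomposition (so $\mu^-(P) = 0$) and, by inner regularity of $\mu^+$, a compact $K' \subseteq P$ with $\mu^+(K') > 0$. On $K'$ the measure $\mu$ coincides with $\mu^+$, so it suffices to find a compact zero-dimensional $Z \subseteq K'$ with $\mu^+(Z) > 0$; replace $K$ by $K'$. Since $K$ is metrizable compact fix an embedding $K \hookrightarrow [0,1]^\omega$, and write $\pi_n\colon K \to [0,1]$ for the coordinate projections and $\nu_n = (\pi_n)_\ast \mu$.

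The key lemma is: for any finite positive Radon measure $\nu$ on $[0,1]$ and $\delta > 0$ there is a compact zero-dimensional $C \subseteq [0,1]$ with $\nu([0,1]\setminus C) < \delta$. A compact subset of $[0,1]$ is zero-dimensional if and only if it has empty interior (given $x \in F$ and $\varepsilon > 0$, choose $y_1 \in (x-\varepsilon,x)\setminus F$ and $y_2 \in (x,x+\varepsilon)\setminus F$, which exist because $F$ contains no nondegenerate subinterval; then $F \cap [y_1,y_2]$ is a clopen neighborhood of $x$ in $F$), so equivalently I need an open \emph{dense} $U \subseteq [0,1]$ with $\nu(U) < \delta$. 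Since the set of atoms of $\nu$ is countable, I can pick a countable dense sequence $\{d_k\}_{k \in \omega} \subseteq [0,1]$ avoiding all atoms of $\nu$. Because $\nu(\{d_k\}) = 0$, I may pick an open interval $I_k \ni d_k$ with $\nu(I_k) < \delta/2^{k+1}$; then $U = \bigcup_k I_k$ is open, dense, and $\nu(U) < \delta$, so $C = [0,1]\setminus U$ works.

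Applying this lemma with $\delta_n = \mu(K)/2^{n+2}$ yields compact zero-dimensional $C_n \subseteq [0,1]$ with $\nu_n([0,1]\setminus C_n) < \mu(K)/2^{n+2}$. Set $Z = \bigcap_n \pi_n^{-1}(C_n)$. It is closed in $K$, hence compact; under the embedding $K \subseteq [0,1]^\omega$ it sits inside $\prod_n C_n$, which is zero-dimensional (it has a basis of clopen sets, or alternatively is an inverse limit of finite products of zero-dimensional spaces to which Theorems \ref{product-dim} and \ref{inverse-dimension} apply), so $Z$ is zero-dimensional by Theorem \ref{closed-subspace}. Finally
$$\mu(Z) \;\geq\; \mu(K) - \sum_n \nu_n\bigl([0,1]\setminus C_n\bigr) \;>\; \mu(K) - \mu(K)/2 \;>\; 0.$$
The main obstacle is the lemma on $[0,1]$; the delicate point there is to dodge the atoms of $\nu$ so that the chosen intervals covering the countable dense set can be made to have arbitrarily small $\nu$-measure. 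Everything else is bookkeeping using standard properties of pushforwards and of covering dimension.
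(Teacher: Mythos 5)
Your proof is correct, but it takes a genuinely different route from the paper's. You reduce at the outset to a positive measure via Hahn--Jordan decomposition and inner regularity, embed $K$ into the Hilbert cube, and work coordinatewise: your key lemma produces, for each pushforward measure on $[0,1]$, a dense open set of arbitrarily small measure (intervals around a countable dense set dodging the atoms), whose complement is a compact set with empty interior and hence zero-dimensional; you then take $Z=\bigcap_n\pi_n^{-1}(C_n)$ and invoke the dimension theorems already quoted in the paper (Theorems \ref{closed-subspace}, \ref{product-dim}, \ref{inverse-dimension}) to see that $Z$, being closed in the zero-dimensional product $\prod_n C_n$, is zero-dimensional. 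The paper instead works intrinsically with the signed measure: it fixes a countable dense set of $K$, takes at each point a nested local base with $\overline{U}_{i+1}\subseteq U_i$ (so the boundaries are pairwise disjoint and their $|\mu|$-measures summable), deletes a tail of boundaries of total $|\mu|$-measure less than $\|\mu\|$, and observes that the traces of the surviving basic sets are clopen in the remainder $Y$, which is therefore zero-dimensional with $|\mu|(Y)\neq 0$; regularity then yields a compact $Z\subseteq Y$ with $\mu(Z)\neq 0$. Your version buys a very concrete one-dimensional lemma and offloads the topology onto standard covering-dimension theorems, at the cost of an embedding and the preliminary Hahn-decomposition step; the paper's version is self-contained, avoids the signed-measure reduction by working with $|\mu|$ throughout, and exhibits a clopen base directly. (Two cosmetic points in yours: the choice of $y_1\in(x-\varepsilon,x)\setminus F$ needs the obvious adjustment when $x$ is an endpoint of $[0,1]$, and "say $\mu^+\neq 0$" should be flagged as a harmless symmetry reduction.)
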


\begin{proof}
Let $\{d_n \}_{n\in\omega}$ be a countable dense subset of $K$. For every $n\in\omega $ pick a countable local base $\{U_i^n\}_{i\in\omega}$ at $d_n$ such that $\overline{U}_{i+1} \subseteq U_i$ for $i\in\omega$. Then for every $n\in \omega$ there is $k_n\in\omega$ such that $$\sum_{i=k_n}^\infty |\mu|(\partial U_i^n)<\frac{\|\mu\|}{2^{n+1}}.$$
In particular we have $$|\mu|(Y)\neq 0$$
where $$Y=K\backslash \bigcup_{n=0}^\infty\bigcup_{i= k_n}^\infty \partial U_i^n.$$
Moreover, $Y$ is zero-dimensional, since $\{U_i^n\cap Y: n\in\omega, i\geq k_n\}= \{(U_i^n\backslash \partial U_i^n)\cap Y: n\in\omega, i\geq k_n\}$ forms a base of $Y$ consisting of clopen sets. By regularity of $\mu$ there is a compact set $Z\subseteq Y$ with $\mu(Z)\neq 0$ which is zero-dimensional as a compact subset of zero-dimensional space $Y$.
\end{proof}

\section{Spaces $C(K)$ with few operators}\label{section-few-operators}
We will follow the terminology from \cite{koszmider-shelah}.
We say that a bounded linear operator $T:C(K)\rightarrow C(K)$ is a weak multiplication, if it is of the form $T=gI+S$, where $g$ is a continuous function on $K$, $I$ is the identity operator and $S:C(K)\rightarrow C(K)$ is weakly compact. $T$ is called a weak multiplier, if $T^*=gI+S$ for some bounded Borel map $g:K\rightarrow \R$ and weakly compact $S:C(K)^*\rightarrow C(K)^*$. 

\begin{definition}\label{few-operators-def}
Let $K$ be a compact Hausdorff space. We say that the Banach space $C(K)$ has few operators if every bounded linear operator $T:C(K) \rightarrow C(K)$ is a weak multiplication.
\end{definition}

\begin{lemma}\label{ccc}
Suppose that $K$ is a c.c.c. compact Hausdorff space and that $C(K)\sim C(L)$ for a compact Hausdorff space $L$. Then $L$ is also c.c.c.
\end{lemma}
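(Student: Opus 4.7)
My plan is to prove the contrapositive: if $L$ fails c.c.c., then so does $K$.

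First, from an uncountable pairwise disjoint family $\{V_\alpha\}_{\alpha<\omega_1}$ of non-empty open sets in $L$, I would use Urysohn's lemma to produce functions $g_\alpha\in C_I(L)$ with $\|g_\alpha\|=1$ and $\supp(g_\alpha)\subseteq V_\alpha$, and then transport everything to $K$ via the isomorphism $T\colon C(L)\to C(K)$ by setting $f_\alpha=T(g_\alpha)$. Write $C=\max(\|T\|,\|T^{-1}\|)$. Because the supports of the $g_\alpha$'s are pairwise disjoint, $\|\sum_{\alpha\in F} g_\alpha\|=1$ for every finite $F\subseteq\omega_1$, and hence $\|\sum_{\alpha\in F}f_\alpha\|\le C$; moreover $\|f_\alpha\|\ge 1/C$.

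Second, I would look at the open sets $B_\alpha=\{x\in K:|f_\alpha(x)|>1/(2C)\}$, each non-empty by the lower norm estimate together with compactness of $K$. The crucial observation is that this family is uniformly point-finite: if $x\in K$ is fixed and $F^+=\{\alpha:f_\alpha(x)>1/(2C)\}$ is a finite set of indices, then evaluating $\sum_{\alpha\in F^+}f_\alpha$ at $x$ gives $\|\sum_{\alpha\in F^+}f_\alpha\|\ge|F^+|/(2C)$; combined with the upper bound $C$ this forces $|F^+|\le 2C^2$, and the analogous argument for $F^-=\{\alpha:f_\alpha(x)<-1/(2C)\}$ shows that every $x\in K$ lies in at most $4C^2$ of the $B_\alpha$'s.

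The final step, and the main obstacle, is to derive a contradiction with $K$ being c.c.c. To this end I would prove an auxiliary lemma: a c.c.c.\ Hausdorff space cannot carry an uncountable family of non-empty open sets whose point-multiplicity is bounded by some $k\in\omega$. The proof is by induction on $k$: the case $k=1$ is just the definition of c.c.c. For the inductive step, by Zorn's lemma pick a maximal pairwise disjoint subfamily of $\{B_\alpha\}_{\alpha<\omega_1}$; by c.c.c.\ it is countable, and by maximality some member $B_{\alpha_0}$ of it meets $B_\alpha$ for uncountably many $\alpha\ne\alpha_0$. Then $\{B_\alpha\cap B_{\alpha_0}\}$ is an uncountable family of non-empty open subsets of the open subspace $B_{\alpha_0}$, which is itself c.c.c.\ since c.c.c.\ is inherited by open subspaces, and each point of $B_{\alpha_0}$ lies in at most $k-1$ of these sets (the slot corresponding to $B_{\alpha_0}$ being used up). The induction closes, yielding the required contradiction.
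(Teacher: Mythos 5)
Your argument is correct, but it takes a genuinely different route from the paper. The paper disposes of the lemma in one line by citing Rosenthal's theorem: a compact space $M$ is c.c.c. if and only if $C(M)$ contains no isomorphic copy of $c_0(\omega_1)$, so that c.c.c.-ness of the underlying compact space is an isomorphism invariant of the Banach space. You instead give a direct, self-contained proof of the contrapositive: disjointly supported norm-one functions $g_\alpha$ witnessing the failure of c.c.c. in $L$ are pushed through $T$, the two bounds $\|\sum_{\alpha\in F} f_\alpha\|\le C$ and $\|f_\alpha\|\ge 1/C$ produce an uncountable family of non-empty open sets $B_\alpha\subseteq K$ of point-multiplicity at most $4C^2$, and your induction on the multiplicity bound (maximal disjoint subfamily, pass to an open subspace, which is again c.c.c.) correctly shows that a c.c.c. space admits no such family. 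In effect you reprove by hand, in a quantitative form, exactly the half of Rosenthal's criterion needed here: your $g_\alpha$ span a copy of $c_0(\omega_1)$ in $C(L)$, and the multiplicity argument shows that $C(K)$ for c.c.c. $K$ cannot absorb it. The paper's citation buys brevity; your proof buys self-containedness, using nothing beyond Urysohn's lemma and elementary combinatorics. Two cosmetic remarks: the estimate for $F^+=\{\alpha: f_\alpha(x)>1/(2C)\}$ should formally be applied to an arbitrary finite subset $F\subseteq F^+$, giving $|F|\le 2C^2$ and hence finiteness of $F^+$ with the same bound, rather than assuming $F^+$ finite at the outset; and compactness of $K$ is not needed for $B_\alpha\neq\varnothing$, since $\|f_\alpha\|\ge 1/C>1/(2C)$ already yields a point where $|f_\alpha|$ exceeds $1/(2C)$.
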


\begin{proof}
By \cite[Theorem 4.5(a)]{rosenthal} a compact space $M$ is c.c.c. if and only if $C(M)$ contains no isomorphic copy of $c_0(\omega_1)$, so in particular given property is an isomorphism invariant.
\end{proof}

\begin{lemma}\label{convergent-sequence}
Let $K$ be a compact Hausdorff space. If $K$ has a non-trivial convergent sequence, then $C(K)$ admits a complemented copy of $c_0$. In particular, if $C(K)$ has few operators, then $K$ has no non-trivial convergent sequences. 
\end{lemma}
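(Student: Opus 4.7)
The plan is to prove the first statement (that a non-trivial convergent sequence in $K$ produces a complemented copy of $c_0$ in $C(K)$) by a standard Urysohn construction, and then to deduce the ``in particular'' clause from it via the well-known Grothendieck property of few-operators $C(K)$-spaces.

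First, I would pass to a subsequence to assume the given convergent sequence consists of pairwise distinct points $(y_n)_{n\in\omega}$ converging to a limit $y$ with $y_n\neq y$ for every $n$. The main technical step---and the one I expect to require the most care---is to construct pairwise disjoint open neighbourhoods $U_n\ni y_n$ with $y\notin\overline{U_n}$. The key observation is that $\{y_k:k\neq n\}\cup\{y\}$ is closed, because $y$ is the unique accumulation point of the sequence, so I can proceed inductively: at stage $n$, the set
$$ F_n=\{y_k:k\neq n\}\cup\{y\}\cup\bigcup_{k<n}\overline{U_k} $$
is closed and does not contain $y_n$ (each $\overline{U_k}$, $k<n$, was chosen to avoid $y_n$), so normality of the compact Hausdorff $K$ produces an open $U_n\ni y_n$ with $\overline{U_n}\cap F_n=\varnothing$. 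Urysohn's lemma then yields $f_n\in C_I(K)$ with $f_n(y_n)=1$ and $\supp(f_n)\subseteq U_n$. Because these supports are pairwise disjoint and $\|f_n\|_\infty=1$, the map $T:c_0\to C(K)$, $T((a_n))=\sum_n a_nf_n$, is a well-defined isometric embedding (partial sums converge uniformly, using $|a_n|\to 0$). The formula $P(f)=\sum_n (f(y_n)-f(y))f_n$ defines a bounded linear operator $C(K)\to C(K)$ (its coefficients tend to zero by continuity of $f$ at $y$) whose range is $T[c_0]$, and evaluating at $y_m$ and at $y$ gives $P^2=P$ directly, so $T[c_0]$ is complemented.

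For the second assertion I would combine this with the well-known fact, recalled in the introduction (cf.\ \cite[Theorem 13]{few-operators-survey} and \cite{few-operators-main}), that $C(K)$ with few operators is a Grothendieck space. A Grothendieck $C(K)$ cannot contain a complemented copy of $c_0$: in the setting above, the sequence $(\delta_{y_n}-\delta_y)$ in $C(K)^*$ is weak$^*$-null, since $f(y_n)-f(y)\to 0$ for every $f\in C(K)$, but under the dual of the projection onto the $c_0$-copy it corresponds to the canonical basis of $\ell^1$, which fails to be weakly null by Schur's theorem; this contradicts the Grothendieck property. Hence few operators forces $K$ to carry no non-trivial convergent sequence, as required.
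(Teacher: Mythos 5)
Your proof is correct and is essentially the paper's approach: the paper disposes of both claims by citation (to Kania for the fact that non-trivial convergent sequences yield complemented copies of $c_0$, and to the few-operators survey for the second part), and your Urysohn construction together with the Grothendieck/Schur argument are precisely the standard proofs behind those citations. A minor remark: your second argument only uses the embedding $T$ and its adjoint, since $T^*(\delta_{y_n}-\delta_y)$ is already the canonical $\ell^1$ basis, so complementedness is not actually needed for that half.
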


\begin{proof}
The fact that non-trivial convergent sequences give rise to complemented copies of $c_0$ is well-known (cf. \cite{kania}). The second part of the lemma follows from \cite[Theorem 13 (3)]{few-operators-survey}.
\end{proof}

\begin{lemma}\label{isolated-points}
Assume that $K$ is a separable connected compact Hausdorff space such that $C(K)$ has few operators and $L$ is a compact Hausdorff space such that $C(K)\sim C(L)$. Let $J$ be the set of isolated points in $L$ and $L'=L\backslash J$. Then $J$ is a countable set and $L'$ has no isolated points. 
\end{lemma}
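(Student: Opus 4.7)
The plan is to dispatch the two assertions separately, both by routing the few-operators hypothesis through the c.c.c. property of Lemma \ref{ccc} and the absence of non-trivial convergent sequences implied by Lemma \ref{convergent-sequence}.

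Since $K$ is separable it is c.c.c., so by Lemma \ref{ccc} $L$ is c.c.c. as well. The isolated points of $L$ form a pairwise disjoint family of non-empty open singletons $\{\{j\} : j \in J\}$, so $J$ must be countable.

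For the second assertion I would argue by contradiction. Suppose $x \in L'$ is isolated in $L'$ and pick an open $U \ni x$ in $L$ with $U \cap L' = \{x\}$. By regularity choose an open $W$ with $x \in W \subseteq \overline{W} \subseteq U$, and set $S = W \setminus \{x\}$. Then $S \subseteq U \setminus \{x\} \subseteq J$; moreover $S \neq \varnothing$, and since a finite Hausdorff open neighborhood $W$ of $x$ would make $\{x\}$ open in $L$, the set $S$ is in fact infinite. Because $S$ consists of isolated points, c.c.c. forces $S$ to be countable, and I enumerate $S = \{y_n : n \in \omega\}$ as a sequence of distinct isolated points.

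Next I show $y_n \to x$. Every cluster point of the sequence $\{y_n\}$ is a limit point of the set $S$ (the $y_n$ are distinct), and any such limit point must lie in $L'$, because an isolated point of $L$ cannot be a limit of a set not containing it. Since $\overline{S} \subseteq \overline{W} \subseteq U$, limit points of $S$ lie in $U \cap L' = \{x\}$, so $x$ is the unique cluster point of $\{y_n\}$. In a compact Hausdorff space a sequence of distinct points with a unique cluster point converges to that point: otherwise infinitely many terms would lie in the closed set $L \setminus V$ for some open $V \ni x$, and this infinite subset of a compact Hausdorff space would have its own limit point outside $V$, producing a second cluster point. Hence $\{y_n\}$ is a non-trivial convergent sequence in $L$.

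Finally, the first part of Lemma \ref{convergent-sequence} turns this sequence into a complemented copy of $c_0$ in $C(L)$, which transfers via $C(K) \sim C(L)$ to a complemented copy of $c_0$ in $C(K)$. This contradicts the few-operators hypothesis by exactly the implication used in the ``in particular'' half of Lemma \ref{convergent-sequence}, namely that $C(K)$ with few operators admits no complemented copy of $c_0$. The main obstacle is the uniqueness-of-cluster-point step, since $L$ need not be first countable at $x$; one must apply compactness a second time to the infinite set of terms avoiding a neighborhood of $x$ to rule out a spurious second cluster point.
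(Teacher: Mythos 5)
Your proof is correct and takes essentially the same route as the paper: countability of $J$ via c.c.c.\ (Lemma \ref{ccc}), and, for the second part, a contradiction obtained by producing a non-trivial convergent sequence of isolated points accumulating at the putative isolated point of $L'$, then invoking Lemma \ref{convergent-sequence} and the impossibility of a complemented copy of $c_0$ in $C(K)$ (the paper phrases this as indecomposability of $C(K)$). The only cosmetic difference is that the paper notes directly that $\overline{V}\subseteq J\cup\{x\}$ is an infinite countable compact space with exactly one non-isolated point, hence itself a convergent sequence, whereas you extract the sequence explicitly and verify convergence via the unique-cluster-point compactness argument.
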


\begin{proof}
Since $K$ is separable, it is c.c.c., so by Lemma \ref{ccc} $L$ is also c.c.c. In particular $J$ is countable. 

Obviously, if $J$ is finite, then $L'$ has no isolated points, so we may assume that $J$ is infinite. 
Suppose that $x\in L'$ is an isolated point. Then $L'\backslash \{x\}$ is a closed subspace of $L$, so there is an open set $V\subseteq L$ such that $x\in V$ and $\overline{V}\cap (L'\backslash \{x\})=\varnothing$. $\overline{V}\subseteq J\cup \{x\}$, so $\overline{V}$ is an infinite countable compact space with exactly one non-isolated point i.e. it is a convergent sequence. By Lemma \ref{convergent-sequence} $C(L)$ admits a complemented copy of $c_0$, and so $C(K)$ admits a complemented copy of $c_0$. However, it is impossible since by \cite[Theorem 13 (a)]{few-operators-survey} $C(K)$ is indecomposable. 
\end{proof}

\begin{definition}
For a compact space $K$ and a function $f\in C(K)$ we denote by $M_f$ the operator $M_f:C(K)\rightarrow C(K)$ given by $M_f(g)=fg$.  
\end{definition}

In the next lemmas we will use the following characterization of weakly compact operators on Banach spaces of continuous functions from \cite[p. 160]{diestel-uhl}.

\begin{theorem}\label{weakly-compact-thm}
If $K$ is a compact Hausdorff space, then an operator $T$ on $C(K)$ is weakly compact if and only if for every bounded sequence $(e_n)_{n\in \omega}$ of pairwise disjoint functions (i.e. $e_n\cdot e_m =0$ for $n\neq m$) we have $\lim_{n\rightarrow \infty} \|T(e_n)\|=0$.
\end{theorem}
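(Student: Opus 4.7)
The plan is to prove the two implications separately, with the forward direction being relatively quick and the converse relying on a standard weak-compactness characterization in spaces of measures.

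For the forward direction, suppose $T$ is weakly compact and let $(e_n)$ be a bounded pairwise disjoint sequence in $C(K)$ with $M=\sup_n\|e_n\|$. I would first check that $(e_n)$ is weakly null. Given any Radon measure $\mu$ on $K$, the disjointness condition $e_n\cdot e_m=0$ for $n\neq m$ forces $\sum_n |e_n(x)|\leq M$ pointwise, so by monotone convergence $\sum_n \int |e_n|\,d|\mu|\leq M\|\mu\|<\infty$, whence $\int e_n\,d\mu\to 0$. To turn this weak-null property into norm convergence after applying $T$, I would invoke the classical Dunford-Pettis property of $C(K)$: weakly compact operators out of $C(K)$ are completely continuous, so $\|T(e_n)\|\to 0$.

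For the converse I would argue by contraposition. Assume $T$ is not weakly compact, so neither is its adjoint $T^*$ acting on $M(K)$, meaning $T^*[B_{C(K)^*}]$ fails to be relatively weakly compact. By the Dieudonn\'e-Grothendieck characterization of weak compactness in spaces of measures, this yields a sequence $(\nu_n)$ with $\|\nu_n\|\leq 1$, a pairwise disjoint family of open sets $(U_n)$ in $K$, and a constant $\delta>0$ such that $|T^*\nu_n|(U_n)\geq \delta$. Using the Jordan decomposition together with inner regularity of the positive and negative parts of $T^*\nu_n$, I would select compact $F_n\subseteq U_n$ carrying at least $\delta/3$ of one signed portion of the measure, and then apply Urysohn's lemma to build $e_n\in C(K)$ with $\|e_n\|\leq 1$, $\supp e_n\subseteq U_n$, and $|\int e_n\,d(T^*\nu_n)|\geq \delta/4$. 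Pairwise disjointness of the $U_n$'s transfers to the $e_n$'s, and the duality estimate $\|T(e_n)\|\geq |\int T(e_n)\,d\nu_n|=|\int e_n\,d(T^*\nu_n)|\geq \delta/4$ contradicts the hypothesis.

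The main obstacle is the Dieudonn\'e-Grothendieck theorem, which is itself a substantial characterization of weak compactness in $M(K)$; once it is imported, the only remaining work is the Urysohn construction together with a careful Jordan-decomposition argument to fix the sign of $e_n$ so that the mass estimate transfers from $|T^*\nu_n|(U_n)$ to $|\int e_n\,d(T^*\nu_n)|$. The forward direction, by contrast, reduces essentially immediately to the Dunford-Pettis property of $C(K)$.
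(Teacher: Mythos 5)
Your argument is correct in substance, but it takes a genuinely different route from the paper, which does not prove Theorem \ref{weakly-compact-thm} at all: it simply cites the characterization from Diestel--Uhl (p.~160), where the result is extracted from the representing vector measure of an operator on $C(K)$ (weak compactness being equivalent to countable additivity/strong additivity of that measure). You instead reassemble the statement from three classical ingredients: the pointwise-disjointness observation that makes $(e_n)$ weakly null plus the Dunford--Pettis property of $C(K)$ for the forward implication, and Gantmacher's theorem plus the Dieudonn\'e--Grothendieck criterion for relative weak compactness in $M(K)$ for the converse. This is a legitimate and arguably more transparent proof than the cited one, at the cost of importing Dieudonn\'e--Grothendieck, which carries most of the weight; the paper's citation buys the same content packaged through representing measures.

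One step in your converse needs the care you yourself flag. From $|T^*\nu_n|(U_n)\geq\delta$, choosing a compact $F_n\subseteq U_n$ carrying $\delta/3$ of (say) the positive part and taking a Urysohn function supported in $U_n$ does \emph{not} by itself give $|\int e_n\,d(T^*\nu_n)|\geq\delta/4$, since the negative part may be large on $U_n\setminus F_n$ and cancel the estimate. The fix is standard: take $F_n$ inside a Hahn set for the positive part, then use outer regularity of the variation to pick an open $V_n$ with $F_n\subseteq V_n\subseteq U_n$ and $|T^*\nu_n|(V_n\setminus F_n)$ small, and let $e_n$ be a Urysohn function equal to $1$ on $F_n$ with $\supp e_n\subseteq V_n$; equivalently, invoke the identity $|\mu|(U)=\sup\{|\int f\,d\mu|:f\in C(K),\ \|f\|\leq 1,\ \supp f\subseteq U\}$ for open $U$. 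With that detail inserted, the duality estimate $\|T(e_n)\|\geq|\int e_n\,d(T^*\nu_n)|$ completes your contrapositive argument as intended.
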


\begin{lemma}\label{weakly-compact-mult}
Let $L$ be a compact Hausdorff space, $J$ the set of isolated points in $L$, and $L'=L\backslash J$. Assume that $f\in C(L)$ is such that $f|L'=0$. Then $M_f$ is weakly compact.
\end{lemma}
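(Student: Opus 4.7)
The plan is to apply the weakly-compact criterion from Theorem \ref{weakly-compact-thm}: it suffices to show that $\|M_f(e_n)\| = \|f e_n\| \to 0$ for every bounded sequence $(e_n)_{n\in\omega}$ of pairwise disjoint functions in $C(L)$.

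The main idea is to exploit the interaction between the hypothesis $f|L' = 0$ and the structure of $L$. For each $\epsilon > 0$, I would consider the closed set
\[ F_\epsilon = \{x \in L : |f(x)| \geq \epsilon\}. \]
Since $f$ is continuous and vanishes on $L'$, we have $F_\epsilon \subseteq J$. Each $x \in F_\epsilon$ is isolated in $L$, hence isolated in $F_\epsilon$, so $F_\epsilon$ is discrete as a subspace. Being a closed discrete subset of the compact space $L$, it must be finite. This is the key structural fact that drives the argument.

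Now fix $M$ with $\|e_n\| \leq M$ for all $n$, and suppose toward a contradiction that $\|f e_n\| \not\to 0$. Passing to a subsequence, I may assume there is some $\delta > 0$ with $\|f e_n\| \geq \delta$ for all $n$. For each $n$, choose $x_n \in L$ with $|f(x_n) e_n(x_n)| \geq \delta$; then $|f(x_n)| \geq \delta/M$, so $x_n \in F_{\delta/M}$. Since $F_{\delta/M}$ is finite, by the pigeonhole principle some point $x$ equals $x_n$ for infinitely many $n$, giving $|e_n(x)| \geq \delta/\|f\|_\infty > 0$ for infinitely many $n$. But the functions $e_n$ are pairwise disjoint, which forces $e_n(x) \cdot e_m(x) = 0$ for $n \neq m$, so at most one $e_n$ can be nonzero at $x$. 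This contradiction gives $\|M_f(e_n)\| \to 0$, and Theorem \ref{weakly-compact-thm} then yields weak compactness of $M_f$.

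I do not anticipate any genuine obstacle here; the only subtle point is the observation that a closed subset of a compact Hausdorff space consisting entirely of isolated points of the ambient space is finite, which is exactly what allows the pigeonhole step to go through.
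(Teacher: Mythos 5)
Your proof is correct and follows essentially the same route as the paper: both arguments hinge on the fact that $\{x : |f(x)|\geq\varepsilon\}$ is a finite set of isolated points, and then use the pairwise disjointness of the $e_n$ to conclude $\|fe_n\|\to 0$ and apply Theorem \ref{weakly-compact-thm}. Your contradiction/pigeonhole phrasing just spells out the step the paper states directly as ``for $n$ large enough.''
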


\begin{proof}
Fix any bounded pairwise disjoint sequence $(e_n)_{n\in \omega}$ of elements of $C(L)$. Without loss of generality we may assume that $\|e_n\|\leq 1$ for each $n$. Let $\varepsilon>0$. 
Since $f$ is continuous and equal to $0$ on $L'$ there is only finitely many points $x$ such that $|f(x)|\geq \varepsilon$. Hence for $n$ large enough we have $\|M_f(e_n)\|=\|fe_n\|<\varepsilon$, which means that $\lim_{n\rightarrow \infty} \|M_f(e_n)\|=0$. Now Theorem \ref{weakly-compact-thm} says that $M_f$ is weakly compact. 
\end{proof}

\begin{lemma}\label{weakly-compact-mult2}
Assume that $K$ has no isolated points and $f\in C(K)$ is such that $M_f$ is weakly compact. Then $f=0$. 
\end{lemma}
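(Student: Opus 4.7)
The plan is a direct contrapositive argument using Theorem~\ref{weakly-compact-thm}: assuming $f\neq 0$, I will produce a bounded, pairwise disjoint sequence $(e_n)_{n\in\omega}$ in $C(K)$ on which $M_f$ does not vanish in norm in the limit.

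First, suppose $f(x_0)\neq 0$ for some $x_0\in K$. Pick $\varepsilon>0$ with $|f(x_0)|>\varepsilon$, and by continuity of $f$ choose an open neighborhood $U\ni x_0$ on which $|f|>\varepsilon$. The next step is to exploit the hypothesis that $K$ has no isolated points in order to find infinitely many pairwise disjoint nonempty open subsets of $U$. I would do this inductively: any open subset of $K$ also has no isolated points (an isolated point of a subspace that is open in $K$ would be isolated in $K$ itself), so given a nonempty open $W\subseteq U$ and a point $x\in W$, non-isolation produces a second point $y\in W\setminus\{x\}$, and Hausdorffness separates them by disjoint open sets $V,W'\subseteq W$. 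Iterating this construction inside the leftover piece $W'$ produces the desired sequence $(V_n)_{n\in\omega}$ of pairwise disjoint nonempty open subsets of $U$.

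Now in each $V_n$ I pick a point $x_n$ and apply Urysohn's lemma in the compact Hausdorff (hence normal) space $K$ to obtain a continuous $e_n:K\to[0,1]$ with $e_n(x_n)=1$ and $e_n\equiv 0$ off $V_n$. The disjointness of the supports gives $e_n\cdot e_m=0$ for $n\neq m$, so $(e_n)$ is a bounded pairwise disjoint sequence. Since $x_n\in V_n\subseteq U$, we have
\[
\|M_f(e_n)\|=\|f\cdot e_n\|\geq |f(x_n)e_n(x_n)|=|f(x_n)|>\varepsilon
\]
for every $n$, so $\|M_f(e_n)\|\not\to 0$. By Theorem~\ref{weakly-compact-thm} this contradicts the weak compactness of $M_f$, and therefore $f=0$.

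There is no real obstacle here; the only mildly delicate point is the inductive construction of the pairwise disjoint opens, which rests on the elementary observation that the no-isolated-points property passes to open subspaces. Once that is in hand, Urysohn's lemma plus the characterization of weak compactness in $C(K)$ finishes the proof immediately.
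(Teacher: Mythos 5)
Your proof is correct and follows essentially the same route as the paper's: assuming $f\neq 0$, take an open set $U$ on which $|f|>\varepsilon$, use the absence of isolated points to get infinitely many pairwise disjoint nonempty open subsets of $U$, build norm-one bump functions supported there, and contradict Theorem~\ref{weakly-compact-thm}. You merely spell out the inductive construction of the disjoint open sets and the appeal to Urysohn's lemma, which the paper leaves implicit.
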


\begin{proof}
Assume that $f\neq 0$. Then there is non-empty open set $U\subset K$ such that $|f(x)|\geq \varepsilon$ for $x\in U$ and some $\varepsilon>0$. Since there are no isolated points in $K$, $U$ is infinite, so there are pairwise disjoint open subsets $U_n\subseteq U$. Let $e_n\in C(K)$ be such that $e_n(x)=1$ for some $x\in U_n, e_n(x)=0$ for $x\in K\backslash U_n$ and $\|e_n\|=1$. Then for each $n\in\omega$ we have $\|M_fe_n\|\geq \varepsilon$, so by Theorem \ref{weakly-compact-thm} $M_f$ is not weakly compact.   
\end{proof}

\begin{lemma}\label{norm-mult} 
Let $f\in C(L)$ for $L$ compact Hausdorff and assume that there is a non-isolated point $x_0\in L$ such that $|f(x_0)|=\|f\|$. If $R:C(L)\rightarrow C(L)$ is a weakly compact operator, then $\|f\|\leq\|M_f+R\|$. 
\end{lemma}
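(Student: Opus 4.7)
The plan is to construct a bounded pairwise disjoint sequence $(e_n)_{n\in\omega}$ in $C(L)$ with $\|e_n\|\le 1$ such that $\|fe_n\|\to \|f\|$. Once such a sequence is available, Theorem \ref{weakly-compact-thm} applied to $R$ gives $\|R(e_n)\|\to 0$, whence
\[
\|M_f+R\|\ \ge\ \|(M_f+R)(e_n)\|\ \ge\ \|fe_n\|-\|R(e_n)\|\ \xrightarrow[n\to\infty]{}\ \|f\|,
\]
and the inequality $\|f\|\le \|M_f+R\|$ follows.

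To build $(e_n)$, I would exploit the continuity of $f$ together with the non-isolation of $x_0$ in a straightforward recursion. Set $V_1=\{x\in L:|f(x)|>\|f\|-1\}$, which is an open neighbourhood of $x_0$ because $|f(x_0)|=\|f\|$. Assume inductively that $V_n$ is an open neighbourhood of $x_0$ contained in $\{|f|>\|f\|-1/n\}$. Using the fact that $x_0$ is non-isolated, pick some $y_n\in V_n\setminus\{x_0\}$; since $L$ is compact Hausdorff and hence normal, choose disjoint open sets $W_n\ni y_n$ and $V_n'\ni x_0$ with $\overline{W_n}\cap \overline{V_n'}=\varnothing$ and both contained in $V_n$. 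Finally set
\[
V_{n+1}\ =\ V_n'\cap\{x\in L:|f(x)|>\|f\|-1/(n+1)\},
\]
still an open neighbourhood of $x_0$. Urysohn's lemma yields $e_n\in C(L)$ with $0\le e_n\le 1$, $e_n(y_n)=1$, and $e_n=0$ outside $W_n$.

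The key verification is that the closures $\overline{W_n}$ are pairwise disjoint. Indeed, for $m>n$ the induction guarantees $W_m\subseteq V_m\subseteq V_n'$, hence $\overline{W_m}\subseteq \overline{V_n'}$, which is disjoint from $\overline{W_n}$ by construction. Therefore $e_n\cdot e_m\equiv 0$ whenever $n\neq m$, so $(e_n)$ is a pairwise disjoint sequence bounded by $1$. Moreover, since $y_n\in V_n$ and $V_n\subseteq\{|f|>\|f\|-1/n\}$,
\[
\|fe_n\|\ \ge\ |f(y_n)|\,e_n(y_n)\ =\ |f(y_n)|\ >\ \|f\|-\tfrac{1}{n},
\]
so $\|fe_n\|\to \|f\|$, completing the construction.

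The only delicate point in the argument is arranging, at each step of the recursion, that $W_n$ both contains a point where $|f|$ is close to $\|f\|$ and has closure disjoint from all later $\overline{W_m}$. This is handled by shrinking the neighbourhood of $x_0$ twice at each stage: once to $V_n'$ (to push all subsequent $W_m$ off $\overline{W_n}$) and once to intersect with $\{|f|>\|f\|-1/(n+1)\}$ (to keep $|f|$ large near $x_0$). No deeper obstacle appears, and the rest of the proof is a direct application of Theorem \ref{weakly-compact-thm} and the triangle inequality as sketched above.
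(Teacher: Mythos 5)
Your proof is correct and follows essentially the same route as the paper: disjointly supported norm-one bump functions concentrated where $|f|$ is within $1/n$ of $\|f\|$, the weak-compactness criterion of Theorem \ref{weakly-compact-thm} to kill $\|R(e_n)\|$, and the triangle inequality. The only difference is cosmetic: the paper extracts a relatively discrete sequence of points $x_n$ with $|f(x_n)|>\|f\|-1/n$ and separates them by pairwise disjoint open sets, whereas you produce the disjoint supports by recursively shrinking neighbourhoods of $x_0$, which is an equally valid way to arrange the same configuration.
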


\begin{proof}
Since $x_0$ is non-isolated there are distinct points $x_n\in L$ such that $|f(x_n)|>\|f\|-1/n$. By passing to a subsequence we may assume that $\{x_n: n\in\omega\}$ is a relatively discrete subset of $L$.  

Take pairwise disjoint open sets $U_n\subseteq  \{x\in K: |f(x)|>\|f\|-1/n\}, x_n\in U_n$. For each $n\in\omega$ pick $e_n\in C(L)$ such that $\|e_n\|=1$ and $e_n|(L\backslash U_n)=0$. In particular $(e_n)_{n\in\omega}$ are pairwise disjoint functions, so by Theorem \ref{weakly-compact-thm} $\lim_{n\rightarrow \infty} \|R(e_n)\|=0$. Moreover, $\|M_f(e_n)\|= \|fe_n\|\geq \|f\|-1/n$ (from the property of $U_n$). Hence we get that $\|M_f+R\|\geq \|(M_f+R)(e_n)\|= \|M_f(e_n)+R(e_n)\|\geq \|M_f(e_n)\|-\|R(e_n)\|\geq \|f\|-1/n-\|R(e_n)\|$. By taking limit with $n\rightarrow \infty$ we get $\|M_f+R\|\geq \|f\|$. 
\end{proof}

\begin{remark}\label{algebra-isomorphism}
If $K$ and $L$ are compact Hausdorff spaces, and $T:C(K)\rightarrow C(L)$ is an isomorphism of Banach spaces, then $T$ induces an isomorphism of the Banach algebras $\Phi_T:\mathcal{B}(C(L))\rightarrow \mathcal{B}(C(K))$ given by $$\Phi_T(U)=T^{-1}UT.$$

If $R\in \mathcal{B}(C(L))$ is a weakly compact operator, then $\Phi_T(R)$ is also weakly compact as a composition of a weakly compact operator with bounded operators. Similarly, if $S\in \mathcal{B}(C(K))$ is weakly compact, then $\Phi_T^{-1}(S)$ is weakly compact. 
\end{remark}

For the rest of this section we will assume that $K$ and $L$ are compact Hausdorff spaces, $L'$ is the set of non-isolated points of $L$, $C(K)$ has few operators and $T:C(K)\rightarrow C(L)$ is an isomorphism of Banach spaces.

\begin{definition}\label{induced-operator}
Let $\Phi_T$ be such as in Remark \ref{algebra-isomorphism}. We define an operator $\Psi_T:C(L')\rightarrow C(K)$ by putting for each $f'\in C(L')$ $$\Psi_T(f')=g,$$ for $g\in C(K)$ satisfying $\Phi_T(M_f)=M_g+R,$ where $R$ is weakly compact and $f\in C(L)$ is such that $f'=f|L'$.

In other words, $\Psi_T$ is defined in the way such that the following diagram commutes:
\begin{center}
\begin{tikzcd}
C(L) \arrow[d, "\mathcal{R}"'] \arrow[r, "M"] &  \mathcal{B}(C(L)) \arrow[r, "\Phi_T"] & \mathcal{B} (C(K)) \arrow[r, "\pi"] \arrow[l] & \mathcal{B} (C(K))/\mathcal{WC}(C(K)) \arrow[d, "\mathcal{I}"] \\
C(L')  \arrow[rrr, "\Psi_T"', dashed]         &                                        &                                               & C(K) \arrow[u]           
\end{tikzcd}
\end{center}
Here $\mathcal{R}$ stands for the restriction operator (i.e. $\mathcal{R}(f)=f|L'$), $M(f)=M_f$, $\pi$ is the natural surjection onto the quotient algebra $\mathcal{B} (C(K))/\mathcal{WC}(C(K))$, where $\mathcal{WC}(C(K))$ is the closed ideal in $\mathcal{B} (C(K))$ consisting of weakly compact operators and $\mathcal{I}:\mathcal{B} (C(K))/\mathcal{WC}(C(K))\rightarrow C(K)$ is the isometry given by $\mathcal{I}([M_g])=g$.     
\end{definition}

\begin{lemma}\label{well-defined}
Suppose that $K$ has no isolated points. Then the induced operator $\Psi_T:C(L')\rightarrow C(K)$ from Definition \ref{induced-operator} is a well-defined bounded linear and multiplicative operator. 
\end{lemma}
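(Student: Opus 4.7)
The plan is to verify the four required properties in sequence: well-definedness, linearity, multiplicativity, and boundedness. All four reduce to chasing the commutative diagram from Definition \ref{induced-operator}, combined with the facts that $\mathcal{WC}(C(K))$ is a closed two-sided ideal in $\mathcal{B}(C(K))$ and that $K$ has no isolated points (so Lemmas \ref{weakly-compact-mult}, \ref{weakly-compact-mult2} and \ref{norm-mult} apply in the strongest form).

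For well-definedness I would first check, for a single $f \in C(L)$, that the witness $g$ with $\Phi_T(M_f) = M_g + R$ ($R$ weakly compact) is unique: if there were two such $g_1, g_2$, then $M_{g_1 - g_2}$ would be weakly compact, and Lemma \ref{weakly-compact-mult2} (applicable since $K$ has no isolated points) forces $g_1 = g_2$. Next, I would show that $g$ depends only on $f' = f|L'$: if $f_1|L' = f_2|L'$, then $f_1 - f_2$ vanishes on $L'$, so Lemma \ref{weakly-compact-mult} makes $M_{f_1 - f_2}$ weakly compact; Remark \ref{algebra-isomorphism} transports this to weak compactness of $\Phi_T(M_{f_1}) - \Phi_T(M_{f_2})$, and the previous uniqueness argument yields $g_1 = g_2$. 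Linearity follows by extending $f'_1 + f'_2$ by $f_1 + f_2$ and using that $M$ and $\Phi_T$ are linear; multiplicativity follows by expanding $(M_{g_1} + R_1)(M_{g_2} + R_2)$ and discarding the three cross/product terms, each of which lies in the ideal $\mathcal{WC}(C(K))$, leaving $M_{g_1 g_2}$ as the canonical representative.

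For boundedness the crucial observation is that $\mathcal{I}:[M_g]\mapsto g$ is an isometry on its range. The inequality $\|[M_g]\| \leq \|g\|$ is immediate, while the reverse uses Lemma \ref{norm-mult}: every $g \in C(K)$ attains its norm at some point of $K$, and since $K$ has no isolated points that point is non-isolated, so $\|g\| \leq \|M_g + R\|$ for every weakly compact $R$, whence $\|g\|\leq\|[M_g]\|$. Chaining this with the diagram gives $\|\Psi_T(f')\| = \|g\| = \|[\Phi_T(M_f)]\| \leq \|\Phi_T\| \cdot \|f\|$ for any extension $f$ of $f'$. Since the set $J$ of isolated points of $L$ is open, $L' = L \setminus J$ is closed in $L$, so Tietze's extension theorem supplies an extension with $\|f\| = \|f'\|$, giving $\|\Psi_T\| \leq \|\Phi_T\|$.

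The main subtlety I expect is in the boundedness step, which tacitly combines three ingredients each tied to the ``no isolated points'' hypothesis on $K$ and the closedness of $L'$ in $L$: uniqueness of $g$, isometric recovery of $g$ from the coset $[M_g]$, and norm-preserving extension of $f'$ to $L$. Once the diagram is read in the right direction these fuse smoothly, but one has to be careful not to confuse the domain $C(L')$ with $C(L)$ when applying $M$.
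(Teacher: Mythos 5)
Your proposal is correct and follows essentially the same route as the paper: well-definedness via Lemmas \ref{weakly-compact-mult} and \ref{weakly-compact-mult2} together with Remark \ref{algebra-isomorphism}, linearity and multiplicativity by expanding representatives modulo the ideal of weakly compact operators, and boundedness via Lemma \ref{norm-mult} combined with a norm-preserving Tietze extension from the closed set $L'$. Your phrasing of the norm estimate through the isometry $\mathcal{I}$ on the quotient is only a cosmetic repackaging of the paper's inequality $\|g\|\leq\|M_g+R\|\leq\|\Phi_T\|\|f\|$.
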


\begin{proof}
Take any $f'\in C(L')$ and
let $f_1,f_2\in C(L)$ and $g_1, g_2\in C(K)$ be such that $f_1|L'=f_2|L'=f'$ and $$\Phi_T(M_{f_i})=M_{g_i}+R_i \ \text{for} \ i=1,2,$$
where $R_1,R_2$ are weakly compact. 
Then $(f_1-f_2)|L'=0$, so by Lemma \ref{weakly-compact-mult} $M_{f_1}-M_{f_2}=M_{f_1-f_2}$ is weakly compact. This implies that 
\begin{gather*}
M_{g_1-g_2}=M_{g_1}-M_{g_2}= R_1-\Phi_T(M_{f_1})-R_2+\Phi_T(M_{f_2}) = \\ = R_1-R_2-\Phi_T(M_{f_1}-M_{f_2})
\end{gather*}
is weakly compact since $\Phi_T(M_{f_1}-M_{f_2})$ is weakly compact (cf. Remark \ref{algebra-isomorphism}). Since $K$ has no isolated points, Lemma \ref{weakly-compact-mult2} implies that $g_1-g_2=0$, so $\Psi_T$ is well-defined.

For the linearity and multiplicativeness fix $f'_1=f_1|L', f'_2=f_2|L'\in C(L), a,b\in \R$ and put $\Psi_T(f'_1)=g_1, \Psi_T(f'_2)=g_2$. We have
\begin{gather*}
\Phi_T(M_{af_1+bf_2})= \Phi_T(aM_{f_1}+bM_{f_2})= a\Phi_T(M_{f_1})+b\Phi_T(M_{f_2})= \\ = M_{ag_1}+aR_1+M_{bg_2}+bR_2 = M_{ag_1+bg_2}+aR_1+bR_2
\end{gather*}
and
\begin{gather*}
\Phi_T(M_{f_1f_2})= \Phi_T(M_{f_1}M_{f_2})= \Phi_T(M_{f_1})\Phi_T(M_{f_2})= \\ = (M_{g_1}+R_1)(M_{g_2}+R_2)= M_{g_1g_2}+R_1M_{g_2}+M_{g_1}R_2+R_1R_2.
\end{gather*}
But $aR_1+bR_2$ and $R_1M_{g_2}+M_{g_1}R_2+R_1R_2$ are weakly compact as the sums of weakly compact operators composed with bounded operators. Hence $\Psi_T(af'_1+bf'_2)=ag_1+bg_2$ and $\Psi_T(f'_1f'_2)=g_1g_2$.

Now we will show that $\Psi_T$ is bounded. Pick any $f'\in C(L')$. By the Tietze theorem $f'$ has an extension $f\in C(L)$ satisfying $\|f\|=\|f'\|$.  From Lemma \ref{norm-mult} we get that if $\Phi_T(M_f)=M_g+R$, then $\|g\|\leq \|M_g+R\|\leq \|\Phi_T\|\|M_f\|=\|\Phi_T\|\|f\|= \|\Phi_T\|\|f'\|$, so $\|\Psi_T\|\leq \|\Phi_T\|$.
\end{proof}

\begin{lemma}\label{injective}
Suppose that $K$ is separable and connected.
Then there is $c>0$ such that for every $f'\in C(L')$ we have $\|\Psi_T(f')\|\geq c\|f'\|$ i.e. $\Psi_T$ is an isomorphism onto its range. In particular $\Psi_T$ has closed range. 
\end{lemma}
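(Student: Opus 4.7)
The plan is to prove the stronger statement that $\Psi_T$ is actually an isometry (so $c=1$ works). The intuition is that a bounded, linear, multiplicative, unital map from $C(L')$ into $C(K)$ is necessarily induced by a continuous map $\phi:K\to L'$ via $\Psi_T(f')=f'\circ\phi$; once $\phi$ is shown to be surjective, $\|\Psi_T(f')\|=\|f'|_{\phi(K)}\|=\|f'\|$ follows immediately, and closedness of the range is automatic.

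First I would check that $\Psi_T$ is unital: applying the definition of $\Psi_T$ to $f\equiv 1\in C(L)$ gives $\Phi_T(I)=I=M_1+0$, so $\Psi_T(1)=1$. Combined with Lemma \ref{well-defined}, for each $y\in K$ the evaluation $\chi_y(f')=\Psi_T(f')(y)$ is a non-zero bounded unital $\R$-algebra homomorphism $C(L')\to\R$; by the standard fact that every such homomorphism on $C(Z)$ for compact Hausdorff $Z$ is a point evaluation (applied to $L'$, which is compact Hausdorff as a closed subset of $L$), there exists $\phi(y)\in L'$ with $\Psi_T(f')(y)=f'(\phi(y))$. Continuity of $\phi:K\to L'$ follows from Urysohn: for any open $U\ni\phi(y_0)$, pick $f'\in C(L')$ with $f'(\phi(y_0))=1$ and $f'\equiv 0$ on $L'\setminus U$, and observe that $\{y:\Psi_T(f')(y)>1/2\}$ is an open neighborhood of $y_0$ sent by $\phi$ into $U$. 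Hence $\Psi_T(f')=f'\circ\phi$ for every $f'\in C(L')$, which immediately yields $\|\Psi_T(f')\|=\sup_{z\in\phi(K)}|f'(z)|$.

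The next step is to establish injectivity of $\Psi_T$. Suppose $\Psi_T(f')=0$ and pick any continuous extension $f\in C(L)$ of $f'$; by definition of $\Psi_T$ the operator $\Phi_T(M_f)$ is weakly compact, so $M_f=T\Phi_T(M_f)T^{-1}$ is a weakly compact operator on $C(L)$. Running the proof of Lemma \ref{weakly-compact-mult2} locally around any non-isolated point $x_0\in L'$ with $f(x_0)\neq 0$ -- Hausdorffness lets one inductively carve out pairwise disjoint non-empty open sets $U_n$ inside a neighborhood on which $|f|\ge|f(x_0)|/2$, pick bump functions supported in the $U_n$, and then contradict Theorem \ref{weakly-compact-thm} -- gives $f|L'=0$, i.e., $f'=0$. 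Since $\phi(K)$ is compact and $L'$ is Hausdorff, $\phi(K)$ is closed in $L'$; if $\phi(K)\subsetneq L'$, Urysohn on $L'$ produces a non-zero $f'\in C(L')$ vanishing on $\phi(K)$, whence $\Psi_T(f')=f'\circ\phi\equiv 0$, contradicting injectivity. Therefore $\phi(K)=L'$, and $\|\Psi_T(f')\|=\|f'\|$ for all $f'$.

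I expect the main technical subtlety to be the weakly-compact-implies-$f|L'=0$ step: Lemma \ref{weakly-compact-mult2} is stated only for spaces without isolated points, while $L$ typically does have isolated points (cf. Lemma \ref{isolated-points}), so one has to re-run its argument localized at a non-isolated point of $L$. The hypotheses ``$K$ separable and connected'' enter only through Lemma \ref{well-defined}: connectedness forces $K$ to have no isolated points (barring the trivial case $|K|=1$), which is what Lemma \ref{well-defined} requires; separability itself is not used in this argument but is the running assumption for the section.
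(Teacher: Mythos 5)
Your proposal is correct, but it takes a genuinely different route from the paper. The paper's proof is a short quantitative argument: choose an extension $f\in C(L)$ of $f'$ with $\|f\|=\|f'\|$ (so $|f|$ attains its norm at a point of $L'$, which is non-isolated), write $\Phi_T(M_f)=M_g+R$, note $\Phi_T^{-1}(M_g)=M_f-\Phi_T^{-1}(R)$ with $\Phi_T^{-1}(R)$ weakly compact, and apply Lemma \ref{norm-mult} to get $\|f'\|=\|f\|\leq\|\Phi_T^{-1}(M_g)\|\leq\|\Phi_T^{-1}\|\,\|g\|$, so $c=1/\|\Phi_T^{-1}\|$ works; injectivity is a by-product and no representation of $\Psi_T$ by a map $K\to L'$ is needed at this stage. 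You instead establish the Gelfand-type representation $\Psi_T(f')=f'\circ\phi$ up front (unitality, point evaluations, continuity of $\phi$), prove injectivity directly by showing $M_f$ is weakly compact on $C(L)$ and running a localized version of Lemma \ref{weakly-compact-mult2} at a non-isolated point (correctly noting that the lemma as stated does not apply to $L$ because of its isolated points), and then deduce surjectivity of $\phi$ and hence that $\Psi_T$ is an isometry ($c=1$). Your argument is sound and in fact yields a stronger conclusion, and it anticipates the map $\varphi$ and its surjectivity, which the paper only extracts after Corollary \ref{cofinite-dimensional} (there using Lemma \ref{injective} itself, so your independent injectivity proof avoids any circularity); the price is length, reliance on the point-evaluation representation, and the need to redo the weak-compactness argument locally on $L$, all of which the paper's norm estimate via Lemma \ref{norm-mult} bypasses.
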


\begin{proof}
Assume that $\Psi_T(f')=g$. Let $f\in C(L)$ be an extension of $f'$ such that $\|f\|=\|f'\|$. We have $\Phi_T(M_f)=M_g+R$ for some weakly compact $R$, so $\Phi_T^{-1}(M_g)=M_f-\Phi_T^{-1}(R)$. $\Phi_T^{-1}(R)$ is weakly compact by Remark \ref{algebra-isomorphism}, so from Lemma \ref{norm-mult} we get 
\begin{gather*}
\|f\|\leq\|M_f-\Phi_T^{-1}(R)\|=\|\Phi_T^{-1}\circ\Phi_T(M_f-\Phi_T^{-1}(R)) \|=\|\Phi_T^{-1}(M_g+R-R) \|=\\=\|\Phi_T^{-1}(M_g)\|\leq \|\Phi_T^{-1}\|\|M_g\|=\|\Phi_T^{-1}\|\|g\|.
\end{gather*}
Hence it is enough to take $c=\frac{1}{\|\Phi_T^{-1}\|}$.
\end{proof}

\begin{proposition}\label{S-separable}
Suppose that $K$ is separable and connected.\\
Let $S:C(K)\rightarrow C(K)$ be given by $S(f)= \Psi_T(T(f)|L')$. Then $$\ker(S)=T^{-1}(\{g\in C(L): g|L'=0\})$$ and it is a separable subspace of $C(K)$. 

% https://tikzcd.yichuanshen.de/#N4Igdg9gJgpgziAXAbVABwnAlgFyxMJZABgBpiBdUkANwEMAbAVxiRAGEAKAaQEoQAvqXSZc+QigCM5KrUYs2XADL8hI7HgJEATDOr1mrRB05KA5KuEgMG8UQDMeuYcU9VsmFADm8IqABmAE4QALZIZCA4EEjSzgrGACqCVkGhMdRRSLpxRiAAOnkhdDgAFgDGjAAEAErJAcFhiNmZiI45bAUACtgA+klqIKmNES1tAEYwYFBIALT2EQx0EwydopoSIIFYXiU4IPryuQDKghQCQA
\begin{center}
\begin{tikzcd}
C(K) \arrow[r, "T"] \arrow[rrr, "S"', bend right] & C(L) \arrow[r, "\mathcal R"] & C(L') \arrow[r, "\Psi_T"] & C(K)
\end{tikzcd}
\end{center}
\end{proposition}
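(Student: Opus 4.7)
The plan is to treat the two assertions separately and in order. The identification of $\ker(S)$ is an immediate consequence of the injectivity of $\Psi_T$ established in Lemma~\ref{injective}, and the separability claim reduces, via the isomorphism $T$, to the separability of the closed subspace $N := \{g \in C(L) : g|L' = 0\}$; I will identify $N$ with $c_0(J)$, where $J := L \setminus L'$ is countable by Lemma~\ref{isolated-points}.

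For the kernel, observe that $S(f) = 0$ iff $\Psi_T(T(f)|L') = 0$. Lemma~\ref{injective} provides a constant $c > 0$ with $\|\Psi_T(h)\| \geq c\|h\|$ for every $h \in C(L')$, so in particular $\Psi_T$ is injective and $S(f) = 0$ is equivalent to $T(f)|L' = 0$, i.e.\ $f \in T^{-1}(N)$. Hence $\ker(S) = T^{-1}(N)$, and since $T$ is a Banach space isomorphism, separability of $\ker(S)$ is equivalent to separability of $N$.

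To prove $N$ is separable I would consider the restriction map $\rho : N \to \ell_\infty(J)$ defined by $\rho(g) = g|J$. Since $g$ vanishes on $L'$ and $L$ is compact, $\|g\| = \sup_{x \in J}|g(x)|$, so $\rho$ is an isometry. For any $g \in N$ and any $\varepsilon > 0$ the set $F_\varepsilon := \{x \in L : |g(x)| \geq \varepsilon\}$ is closed in $L$, and because $g|L' = 0$ it is contained in $J$; since every point of $J$ is isolated in $L$, the subspace $F_\varepsilon$ is simultaneously compact and discrete, hence finite. This places $\rho(g)$ in $c_0(J)$. As $J$ is countable by Lemma~\ref{isolated-points}, $c_0(J)$ is separable, and therefore so is $N$.

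I do not expect a serious obstacle. The argument rests entirely on two ingredients already in hand — the injectivity of $\Psi_T$ from Lemma~\ref{injective} and the countability of the set of isolated points of $L$ from Lemma~\ref{isolated-points} — together with the elementary observation that a closed subset of a compact space whose points are all isolated in the ambient space must be finite.
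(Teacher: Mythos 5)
Your proof is correct, and it is slightly more economical than the paper's. The overall structure is the same --- reduce everything to the subspace $N=\{g\in C(L): g|L'=0\}$ and use countability of the set $J$ of isolated points from Lemma~\ref{isolated-points} --- but you identify $\ker(S)=T^{-1}(N)$ in one line by invoking the injectivity of $\Psi_T$ from Lemma~\ref{injective}, which is legitimate here since that lemma precedes the proposition and its hypotheses (separable, connected $K$) are exactly those assumed; there is no circularity. The paper instead proves the nontrivial inclusion $\ker(S)\subseteq T^{-1}(N)$ without the quantitative lower bound of Lemma~\ref{injective}: from $S(f)=0$ it deduces that $\Phi_T(M_{T(f)})$, hence $M_{T(f)}$, is weakly compact, passes to $M_{T(f)|L'}$ on $C(L')$ via Theorem~\ref{weakly-compact-thm}, and concludes $T(f)|L'=0$ from Lemma~\ref{weakly-compact-mult2} because $L'$ has no isolated points; the reverse inclusion uses Lemma~\ref{weakly-compact-mult}. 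So the paper only needs the qualitative weak-compactness machinery, while you lean on the stronger isomorphism-onto-range statement --- both are available, and your shortcut buys brevity at no cost. For separability, your isometric embedding $g\mapsto g|J$ of $N$ into $c_0(J)$ (with the correct observation that $\{|g|\geq\varepsilon\}$ is compact, discrete, hence finite) is just a repackaging of the paper's identification of $N$ with $\overline{\lin\{\chi_{\{x_n\}}:n\in\omega\}}$; the content is the same.
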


\begin{proof}
By Lemma \ref{isolated-points} the set $J$ of isolated points in $L$ is countable, so we may write $J=\{x_n:n\in \omega\}$. Let $\chi_{\{x_n\}}$ be the characteristic function of $\{x_n\}$.  Observe that $\overline{\lin \{\chi_{\{x_n\}} :n\in \omega\} }= \{g\in C(L): g|L'=0\}$ is a separable subspace of $C(L)$, so it is enough to show that $\ker(S)= T^{-1}(\{g\in C(L): g|L'=0\})$, since $T$ is an isomorphism.

Assume that $S(f)=0$. Then $\Psi_T(T(f)|L')=0$, so $\Phi_T(M_{T(f)})=M_0+R=R$ is weakly compact and hence $M_{T(f)}=T\Phi_T(M_{T(f)})T^{-1}$ is also weakly compact as a composition of a weakly compact operator with bounded operators. From Theorem \ref{weakly-compact-thm} $\lim_{n\rightarrow \infty}\|T(f)e_n\| = 0$ for every bounded disjoint sequence $(e_n)_{n\in \omega}$. This implies that $\lim_{n\rightarrow \infty}\|(T(f)|L')e_n\| = 0$ for every bounded disjoint sequence $(e_n)_{n\in \omega}$. By applying Theorem \ref{weakly-compact-thm} once again we get that $M_{T(f)|L'}$ is weakly compact as an operator on $C(L')$. Since $L'$ has no isolated points (cf. Lemma \ref{isolated-points}) we get that $T(f)|L'=0$ by Lemma \ref{weakly-compact-mult2} i.e. $f\in T^{-1}(\{g\in C(L): g|L'=0\})$, so  $\ker(S)\subseteq T^{-1}(\{g\in C(L): g|L'=0\})$.

If $g\in C(L)$ is such that $g|L'=0$, then by Lemma \ref{weakly-compact-mult} $M_g$ is weakly compact, so $S(T^{-1}(g))=\Psi(g|L')=\Psi(0)=0$ and hence $T^{-1}(g)\in\ker(S)$.
\end{proof}

\begin{proposition}\label{Me-iso}
Suppose that $K$ is separable and connected. Let $S= \Psi_T(T(f)|L')$.
Write $S$ as a sum $S=M_e+W$ with $W$ weakly compact. Then $M_e$ is an isomorphism of $C(K)$.
\end{proposition}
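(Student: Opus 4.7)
My plan is to show that $e$ is nowhere zero on $K$; since $K$ is compact this is equivalent to $e$ being invertible in $C(K)$ and therefore to $M_e$ being an isomorphism of $C(K)$.

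First I would make explicit the geometric structure of $\Psi_T$. Because Lemma~\ref{well-defined} says $\Psi_T: C(L') \to C(K)$ is a bounded unital multiplicative linear operator, for each $x \in K$ the composition $f' \mapsto \Psi_T(f')(x)$ is a unital real-algebra homomorphism $C(L') \to \R$, hence equals evaluation $f' \mapsto f'(\pi(x))$ at a unique point $\pi(x) \in L'$. The resulting map $\pi: K \to L'$ is continuous; it is surjective because $\Psi_T$ is bounded below by Lemma~\ref{injective}, so $\pi(K)$ is dense in $L'$, and then compactness gives $\pi(K) = L'$. Substituting $\Psi_T(f') = f' \circ \pi$ into the identity $S(f) = \Psi_T(T(f)|L') = M_e(f) + W(f)$ yields the pointwise relation
\[
T(f)(\pi(x)) \;=\; e(x)\,f(x) + W(f)(x), \qquad f \in C(K),\; x \in K.
\]

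For the main step I argue by contradiction: suppose $e(x_0) = 0$ for some $x_0 \in K$. Since $K$ has more than one point and is connected Hausdorff it has no isolated points, and using this together with regularity of the compact Hausdorff space $K$ I would construct a pairwise disjoint sequence $(W_n)$ of non-empty open subsets of $K$ with $\sup_{W_n}|e|\le 1/n$ (for instance by separating the strictly nested level sets $\{|e|<1/(2n)\} \setminus \overline{\{|e|<1/(2n+1)\}}$, which are non-empty by connectedness of $e(K)$). Picking $f_n \in C(K)$ supported in $W_n$ with $\|f_n\| = 1$, the sequence $(f_n)$ is bounded with pairwise disjoint supports, hence pointwise null and therefore weakly null in $C(K)$; so $\|W f_n\|\to 0$ by weak compactness of $W$ together with the Dunford--Pettis property of $C(K)$, and $\|e f_n\|\le 1/n\to 0$ by construction. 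Therefore $\|S f_n\|\to 0$, and Lemma~\ref{injective} yields $\|T(f_n)|L'\|\to 0$. But $\|T(f_n)\|\ge\|T^{-1}\|^{-1}>0$ because $T$ is an isomorphism, so each $T(f_n)$ must attain a comparable fraction of its norm at some $y_n\in J = L\setminus L'$. Since $(T f_n)$ is weakly null in $C(L)$ and hence pointwise null on $L$, the $y_n$ are eventually distinct, and by compactness we may assume $y_n\to y^*\in L$; because points of $J$ are isolated in $L$ and therefore cannot be accumulation points of distinct sequences, $y^*\in L'$.

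The remaining contradiction would be obtained by applying the Bessaga--Pe\l{}czy\'{n}ski selection principle: $(f_n)$ being disjointly supported in $C(K)$ is equivalent to the canonical $c_0$ unit vector basis, so $(T f_n)$ is a seminormalized weakly null sequence in $C(L)$ equivalent to the $c_0$ basis and therefore admits (after passing to a subsequence) a pairwise disjoint perturbation $(h_n)\subset C(L)$ with $\|T f_n - h_n\|\to 0$. Then $|h_n(y_n)|$ is bounded below with $y_n\to y^*\in L'$, while $\|h_n|L'\|\to 0$ forces $h_n(y^*)\to 0$; pairwise disjointness of the supports combined with continuity of the $h_n$ at the accumulation point $y^*$ prevents the $y_n$ from approaching $y^*$, giving the required contradiction. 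I expect this last step to be the main obstacle, since turning the accumulation $y_n\to y^*\in L'$ into a clean contradiction with the combination of continuity and Bessaga--Pe\l{}czy\'{n}ski disjointness requires careful handling.
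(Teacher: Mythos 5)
Your argument is fine up to the point where you produce points $y_n\in J$ with $|T(f_n)(y_n)|$ bounded below while $\|T(f_n)|L'\|\to 0$: the construction of the disjoint sets with $|e|$ small, the estimate $\|S(f_n)\|\to 0$, and the use of Lemma \ref{injective} and of $\|T(f_n)\|\geq \|T^{-1}\|^{-1}$ are all correct. The proof breaks at the next step. You ``may assume $y_n\to y^*$ by compactness'', but $L$ is only compact Hausdorff, not sequentially compact, so a sequence of distinct points need not have a convergent subsequence; and in this setting it provably has none, since a non-trivial convergent sequence in $L$ would give a complemented copy of $c_0$ in $C(L)\sim C(K)$, contradicting indecomposability of $C(K)$ (exactly the mechanism of Lemmas \ref{convergent-sequence} and \ref{isolated-points}). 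Note that if such a convergent subsequence did exist you would already be done for that very reason, with no need of Bessaga--Pe\l{}czy\'nski; the absence of non-trivial convergent sequences is precisely what forces a different argument. Replacing the limit by a mere cluster point $y^*$ of $\{y_n\}$ does not rescue the ending: pairwise disjointly supported continuous $h_n$ with $\|h_n|L'\|\to 0$ can perfectly well satisfy $|h_n(y_n)|\geq\delta$ with the peak points $y_n\in J$ clustering at a point of $L'$ (disjoint bumps accumulating at $y^*$), so ``disjointness plus continuity prevents the $y_n$ from approaching $y^*$'' is not a contradiction. Moreover the disjointification step itself is unjustified: the Bessaga--Pe\l{}czy\'nski selection principle gives a basic subsequence, not a subsequence that is a norm-perturbation of a pairwise disjoint sequence in $C(L)$; equivalence to the $c_0$-basis does not yield such a perturbation in a general $C(L)$, and at best a Rosenthal-type extraction gives almost-biorthogonal point evaluations, which is weaker than what your last step uses.

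For contrast, the paper closes the argument without any extraction inside $L$: it uses that $\ker S$ is separable (Proposition \ref{S-separable}) while the closed sets $\overline{V}_n\subseteq U_n$ in $K$ are non-metrizable (no convergent sequences), so one can choose $x_n,y_n\in\overline{V}_n$ not separated by $\ker S$ and take $f_n$ with $f_n(x_n)=1$, $f_n(y_n)=0$, forcing $\operatorname{dist}(f_n,\ker S)\geq 1/2$; since $S$ has closed range, the isomorphism $C(K)/\ker S\cong S[C(K)]$ gives $\|S(f_n)\|\geq c>0$, contradicting $\|S(f_n)\|\leq\|ef_n\|+\|W(f_n)\|\to 0$. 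To salvage your route you would need to replace the convergent-subsequence and disjointification steps by an argument of this kind, i.e., one that exploits $\ker S$ and the closed range of $S$ rather than sequential structure of $L$.
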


\begin{proof}
It is enough to prove that $e(x)\neq 0$ for every $x\in K$. Indeed, if it is the case, then $M_g$ is the inverse of $M_e$ for $g=\frac{1}{e}$.

Assume that $e(z)=0$ for some $z\in K$ and aim for a contradiction. Then using the technique from the proof of Lemma \ref{norm-mult} we construct pairwise disjoint non-empty open subsets $U_n\subseteq K$ such that $\|e|U_n\|\leq \frac{1}{n}$ for each $n\in\omega$. Let $V_n$ be non-empty open sets such that $\overline{V}_n\subseteq U_n$. 

By Lemma \ref{convergent-sequence} $K$ has no convergent sequences and hence for every $n\in\omega$ the space $\overline{V}_n$ is non-metrizable as an infinite (because $V_n$ has no isolated points) compact set without convergent sequences.  We get that points in $\overline{V}_n$ cannot be separated by countable family of continuous functions (otherwise, if $(f_n)_{n\in\omega}$ separated points of $\overline{V}_n$, $(f_1,f_2,\dots):\overline{V}_n \rightarrow \R^n$ would be a homeomorphism onto a compact subspace of metrizable space), so since $\ker(S)$ is separable, there are points $x_n, y_n\in \overline{V}_n\subseteq U_n$ such that $d(x_n)=d(y_n)$ for all $d\in \ker(S)$. Let $f_n\in C(K)$ be such that $\|f_n\|=1, f_n(x_n)=1,f_n(y_n)=0$ and $f_n|(K\backslash U_n)=0$. Then for all $d\in \ker(S)$
\begin{gather*}
\|f_n-d\|\geq \max \{|f_n(x_n)-d(x_n)|, |f_n(y_n)-d(y_n)|\}=\\= \max \{|1-d(x_n)|, |d(x_n)|\}\geq 1/2.
\end{gather*}  
Since $f_n|(K\backslash U_n)=0$ and $\|e|U_n\|\leq \frac{1}{n}$ we have $\|ef_n\|\leq \frac{1}{n}$, so $\lim_{n\rightarrow\infty} \|ef_n\|=0$. 

$\Psi_T$ has closed range (Lemma \ref{injective}) and $T, \mathcal{R}$ are surjective, so $S$ has also closed range. By the first isomorphism theorem (see e.g. \cite[Corollary 2.26]{banach-space-theory}) $S[C(K)]$ is isomorphic to $C(K)/\ker (S)$, so since the distance of $f_n$ from $\ker (S)$ is greater than $1/2$ for all $n\in\omega$, there is $c>0$ such that  $\|S(f_n)\|>c$ for all $n\in\omega$.  

% https://tikzcd.yichuanshen.de/#N4Igdg9gJgpgziAXAbVABwnAlgFyxMJZABgBpiBdUkANwEMAbAVxiRAGEAKAaQEoQAvqXSZc+QigBM5KrUYs2AZWRc+FQcJAZseAkQCMpfbPrNWiDj14B6ADq2A1jABOAAkWDZMKAHN4RUAAzZwgAWyQyEBwIJEM5MyUNIJDwxEjopEkhZLDM6gzEfWyQYNzC-JjEaXiFC3tscIEKASA
\begin{center}
\begin{tikzcd}
C(K) \arrow[rr, "S"] \arrow[rd] &                        & {S[C(K)]} \arrow[ld, "\sim"] \\
                                & C(K)/\ker (S) \arrow[ru] &                             
\end{tikzcd}
\end{center}
But on the other hand we have 
$$\|S(f_n)\|=\|ef_n+W(f_n)\|\leq \|ef_n\|+\|W(f_n)\|\rightarrow 0$$ when $n\rightarrow \infty$, since we have $\lim_{n\rightarrow\infty} \|ef_n\|=0$ and $\lim_{n\rightarrow\infty} \|W(f_n)\|=0$ (because $W$ is weakly compact and $(f_n)$ are bounded and pairwise disjoint), so we get a contradiction.
\end{proof}

Recall that an operator $R:X\rightarrow Y$ is called strictly singular, if for every infinite-dimensional subspace $X'\subseteq X$ the restriction $R|X'$ is not isomorphism. We cite the result from \cite{strictly-singular}.

\begin{theorem}\label{strictly-singular}
Let $X$ be a compact Hausdorff space. A bounded operator $R:C(X)\rightarrow C(X)$ is weakly compact if and only if it is strictly singular.
\end{theorem}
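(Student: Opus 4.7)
The plan is to prove the two implications separately, exploiting two classical structural features of $C(K)$-spaces: the Dunford--Pettis property and Pe\l{}czy\'{n}ski's property (V). Neither implication uses anything specific to the codomain being $C(X)$ rather than an abstract Banach space.

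For the forward direction, weakly compact implies strictly singular, I would argue by contradiction. Suppose $R:C(X)\to C(X)$ is weakly compact but there is a closed infinite-dimensional subspace $Y\subseteq C(X)$ such that $R|Y$ is an isomorphism onto its range. Because $C(X)$ has the Dunford--Pettis property, $R$ is completely continuous, hence maps weakly null sequences to norm null ones. For any weakly null sequence $(y_n)\subseteq Y$, the isomorphism estimate $\|y_n\|\leq c\|Ry_n\|$ then forces $\|y_n\|\to 0$, so $Y$ has the Schur property. On the other hand, since $R[B_Y]\subseteq R[B_{C(X)}]$ is relatively weakly compact and $R|Y$ is an isomorphism onto the closed subspace $R[Y]$, the unit ball $B_Y$ is itself weakly compact, so $Y$ is reflexive. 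A reflexive Schur space must be finite-dimensional: reflexivity yields weakly convergent subsequences of bounded sequences, which the Schur property upgrades to norm convergence, so $B_Y$ is norm compact and Riesz's theorem finishes it. This contradicts the choice of $Y$.

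For the converse, strictly singular implies weakly compact, I would invoke Pe\l{}czy\'{n}ski's property (V) for $C(K)$-spaces, which asserts that every bounded operator $R:C(X)\to Z$ into any Banach space $Z$ is either weakly compact or else fixes a copy of $c_0$, in the sense that there is a subspace $W\subseteq C(X)$ isomorphic to $c_0$ on which $R|W$ is an isomorphism onto its image. If $R$ is not weakly compact, the second alternative produces an infinite-dimensional subspace on which $R$ is an isomorphism, so $R$ is not strictly singular. The contrapositive is the desired implication.

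The main obstacle is that the proof relies on two high-powered structural theorems that are not entirely trivial to establish from scratch: property (V) of $C(K)$ is usually derived via Pe\l{}czy\'{n}ski's $c_0$-subsequence dichotomy for weakly unconditionally Cauchy series in $C(K)$, while the Dunford--Pettis property of $C(K)$ rests on the fact that weakly compact families of Radon measures on $K$ are uniformly countably additive. Both are standard in the literature (see e.g.\ Diestel--Uhl), so at the level of this sketch the argument is essentially the clean combination of these two results as above.
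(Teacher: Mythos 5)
Your proof is correct, and it is essentially the classical argument behind the result, which the paper does not prove but only cites (it is Pe\l{}czy\'{n}ski's theorem), so there is no genuinely different in-paper proof to compare against. Your two halves --- weak compactness plus the Dunford--Pettis property of $C(X)$ forcing any infinite-dimensional subspace $Y$ on which $R$ is an isomorphism to be both reflexive and Schur, hence finite-dimensional, and property (V) producing a fixed copy of $c_0$ for any non-weakly compact operator, hence a failure of strict singularity --- are exactly the standard route; the only implicit step worth making explicit is that an operator bounded below on a subspace is bounded below on its closure, so assuming $Y$ closed is harmless.
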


If we apply the above theorem to \cite[Proposition 2.c.10]{LT} we get the following.

\begin{theorem}\label{fredholm}
Let $E:C(X)\rightarrow C(X)$ be an operator with closed range for which $\dim \ker(E)<\infty$ and $\dim (C(X)/E(C(X))) <\infty$. Let $R: C(X)\rightarrow C(X)$ be weakly compact. Then $E+R$ also has closed range and $\dim ((C(X))/(E+R)(C(X))<\infty$.
\end{theorem}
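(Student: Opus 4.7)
The plan is to recognize the hypothesis on $E$ as the defining property of a Fredholm operator and then invoke the classical stability of Fredholm operators under strictly singular perturbations, after using Theorem \ref{strictly-singular} to translate between ``weakly compact'' and ``strictly singular''.

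First I would note that the three conditions on $E$---namely that $E$ has closed range, $\dim \ker(E)<\infty$, and $\dim(C(X)/E(C(X)))<\infty$---together say precisely that $E$ is a Fredholm operator on $C(X)$ (with some integer Fredholm index $\dim\ker(E)-\dim(C(X)/E(C(X)))$). This is just a matter of matching the statement to the standard definition, but it is the conceptual identification that allows us to reach for perturbation results from the Banach space literature.

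Next I would apply Theorem \ref{strictly-singular} to $R$. Since $R:C(X)\rightarrow C(X)$ is weakly compact by hypothesis, Theorem \ref{strictly-singular} immediately yields that $R$ is strictly singular. This is the essential translation step, because the perturbation result we need---Proposition 2.c.10 in \cite{LT}---is formulated for strictly singular perturbations; the coincidence of weak compactness and strict singularity for operators on $C(X)$ is exactly what makes the argument go through.

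Finally I would invoke the Lindenstrauss--Tzafriri perturbation theorem: the class of Fredholm operators between Banach spaces is stable under strictly singular perturbations, and the Fredholm index is preserved. Applied to our setting, this gives that $E+R$ is Fredholm, hence in particular has closed range and satisfies $\dim(C(X)/(E+R)(C(X)))<\infty$, which is exactly what is asserted. There is really no obstacle here beyond recognizing the right framework, since both ingredients (Theorem \ref{strictly-singular} and Proposition 2.c.10 of \cite{LT}) are cited as known; the role of this theorem in the paper is to package these two black boxes into a single statement tailored to the needs of Section \ref{section-few-operators}.
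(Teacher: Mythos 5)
Your proposal is correct and matches the paper's argument: the paper obtains Theorem \ref{fredholm} precisely by combining Theorem \ref{strictly-singular} (weakly compact $=$ strictly singular on $C(X)$) with \cite[Proposition 2.c.10]{LT}, the stability of such (Fredholm-type) operators under strictly singular perturbations. No gap; same route.
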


\begin{corollary}\label{cofinite-dimensional}
Suppose that $K$ is separable and connected. Let $S= \Psi_T(T(f)|L')$.

Then the range of $S$ is finite-codimensional in $C(K)$. In particular the range of $\Psi_T$ is finite-codimensional in $C(K)$. 
\end{corollary}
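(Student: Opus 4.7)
The plan is to apply the Fredholm-type result (Theorem \ref{fredholm}) to the decomposition $S = M_e + W$ provided just above the corollary. Since Proposition \ref{Me-iso} asserts that $M_e$ is an isomorphism of $C(K)$, the operator $E = M_e$ automatically has closed range (equal to all of $C(K)$), satisfies $\ker(E) = \{0\}$, and gives $C(K)/E(C(K)) = \{0\}$; in particular both $\dim \ker(E)$ and $\dim(C(K)/E(C(K)))$ are finite, trivially equal to $0$. Since $W$ is weakly compact, Theorem \ref{fredholm} applies with $R = W$ and yields that $S = M_e + W$ has closed range and that $\dim\bigl(C(K)/S(C(K))\bigr) < \infty$. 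This is exactly the first assertion of the corollary.

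For the ``in particular'' clause, I would use that $S = \Psi_T \circ \mathcal{R} \circ T$ by definition (see the diagram in Proposition \ref{S-separable}), so
\[
S(C(K)) \;=\; \Psi_T\bigl(\mathcal{R}(T(C(K)))\bigr) \;\subseteq\; \Psi_T(C(L')),
\]
since the range of $\mathcal{R} \circ T$ is contained in $C(L')$. The quotient $C(K)/\Psi_T(C(L'))$ is therefore a quotient of the finite-dimensional space $C(K)/S(C(K))$, and hence is itself finite-dimensional. This gives the stated finite-codimensionality of the range of $\Psi_T$.

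There is essentially no obstacle here: the corollary is a direct bookkeeping consequence of Proposition \ref{Me-iso} plugged into Theorem \ref{fredholm}, together with the factorization of $S$ through $\Psi_T$. The conceptual work (producing the decomposition $S = M_e + W$ with $M_e$ invertible, and showing that the range of $S$ is closed) has already been done in Lemmas \ref{injective} and \ref{well-defined} and Propositions \ref{S-separable}--\ref{Me-iso}; the only thing still needed is to convert ``$M_e$ is invertible and $W$ is weakly compact'' into ``$M_e + W$ is Fredholm on the cokernel side,'' which is precisely what Theorem \ref{fredholm} provides.
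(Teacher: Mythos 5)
Your proof is correct and follows essentially the same route as the paper: apply Theorem \ref{fredholm} to the decomposition $S=M_e+W$, using Proposition \ref{Me-iso} to see that $E=M_e$ trivially satisfies the Fredholm hypotheses, and then note that the range of $S$ is contained in the range of $\Psi_T$ to get the ``in particular'' clause. No gaps.
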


\begin{proof}
Since $M_e$ is an isomorphism (by Proposition \ref{Me-iso}) and $W$ is weakly compact we may apply Theorem \ref{fredholm} to $S=M_e+W$.
\end{proof}

Since $\Psi_T:C(L')\rightarrow C(K)$ is a bounded linear multiplicative operator (Lemma \ref{well-defined}), there is $\varphi:K\rightarrow L'$ such that $\Psi_T(f)=f\circ \varphi$ for $f\in C(L')$ (see e.g. \cite[Theorem 7.7.1]{semadeni}).
From Lemma \ref{injective} and Corollary \ref{cofinite-dimensional} we get that $\Psi_T$ is an embedding with finite-codimensional range, so the induced map $\varphi$ is surjective and has only finitely many fibers containing more than one element and each of these fibers is finite. In particular $K=U\cup F$ where $F$ is a finite set and $\varphi|U$ is a homeomorphism and we get the following theorem.

\begin{theorem}\label{few-operators-modulo-finite}
Suppose that $K$ is a separable connected compact Hausdorff space such that $C(K)$ has few operators and $L$ is a compact Hausdorff space such that $C(K)\sim C(L)$. 

Then $K$ and $L$ are homeomorphic modulo finite set i.e. there are open subsets $U\subseteq K, V\subseteq L$ and finite sets $E\subseteq K, F\subseteq L$ such that $U, V$ are homeomorphic and $K=U\cup E, L=V\cup F$.
\end{theorem}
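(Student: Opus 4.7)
The strategy is to use the representation theorem for multiplicative operators between $C$-spaces to turn the operator $\Psi_T$ of Definition \ref{induced-operator} into a continuous map $\varphi:K\to L'$ that is essentially a homeomorphism, and then to decode the required decomposition of $K$ and $L$ modulo a finite set. All the heavy lifting on $\Psi_T$ has been done in the lemmas leading up to the theorem: by Lemma \ref{isolated-points}, $L'$ has no isolated points and $J:=L\setminus L'$ is countable; by Lemmas \ref{well-defined}, \ref{injective} and Corollary \ref{cofinite-dimensional}, $\Psi_T:C(L')\to C(K)$ is a bounded linear multiplicative embedding with closed, finite-codimensional range. Multiplicativity together with the standard representation result \cite[Theorem 7.7.1]{semadeni} gives a unique continuous $\varphi:K\to L'$ with $\Psi_T(f)=f\circ\varphi$; injectivity of $\Psi_T$ forces $\varphi(K)$ to be dense in $L'$, and compactness of $K$ then yields $\varphi(K)=L'$.

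The subspace $\Psi_T(C(L'))\subseteq C(K)$ consists of precisely the continuous functions on $K$ that are constant on the fibers of $\varphi$; a direct codimension count shows that its codimension equals $\sum_{y\in L'}(|\varphi^{-1}(y)|-1)$. Finite codimension therefore forces only finitely many non-singleton fibers, each of finite cardinality. Let $E\subseteq K$ be their (finite) union and $U:=K\setminus E$; then $U$ is open, and $\varphi|U$ maps $U$ bijectively and continuously to $V:=L'\setminus\varphi(E)$. Since $\varphi$ is continuous from the compact $K$ to the Hausdorff $L'$ it is a closed map, and a short argument shows $\varphi|U$ is also closed as a map onto $V$ (the image of a closed-in-$U$ set $A$ equals $V\cap\varphi(\overline{A})$, where $\overline A$ denotes the closure of $A$ in $K$), so $\varphi|U$ is a homeomorphism of $U$ onto $V$.

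The final step, which I expect to be the main obstacle, is to upgrade $V$ from an open subset of $L'$ to an open subset of $L$ and to show that $F:=L\setminus V$ is finite. Since $K$ is connected and $E$ is finite, $U$ has no isolated points, and hence neither does $V$ (being homeomorphic to $U$); so $V\subseteq L'$ and $V\cap J=\varnothing$, giving $J\subseteq F$. For $F$ to be finite we must therefore show that $J$ is finite. I would do this by contradiction using Lemma \ref{convergent-sequence} together with the indecomposability of $C(K)$ (which follows from the few-operators hypothesis, cf.\ \cite[Theorem 13 (a)]{few-operators-survey}): a non-trivial convergent sequence in $L$ would produce a complemented copy of $c_0$ in $C(L)\sim C(K)$, contradicting indecomposability, so $L$ has no non-trivial convergent sequence. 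The task is then to show that an infinite $J$ forces such a sequence. This is genuinely delicate because Lemma \ref{isolated-points} has already ruled out the simplest source of convergent sequences (there are no isolated points in $L'$), so one cannot apply its separation technique directly to an accumulation point of $J$; the argument must use the c.c.c.\ property of $L$ (Lemma \ref{ccc}) together with other structural features of $L$ inherited from $K$ to single out, inside a suitable closed subset of $\overline{J}$, a countable compact piece whose only non-isolated point lies in $L'$ and is thus the limit of a non-trivial sequence from $J$. Once this is in place, setting $F:=J\cup\varphi(E)$ gives a finite set with $L\setminus V=F$, so $V$ is open in $L$; the decompositions $L=V\cup F$ and $K=U\cup E$ then hold, and $\varphi|U$ is the required homeomorphism.
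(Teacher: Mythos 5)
Your argument up to the decomposition of $K$ follows the paper's proof: the representation $\Psi_T(f)=f\circ\varphi$, surjectivity of $\varphi$, the deduction from finite codimension that only finitely many fibers are non-singletons and each is finite, and the verification that $\varphi$ restricted to $U=K\setminus E$ is a homeomorphism onto $V=L'\setminus\varphi(E)$ all match (and usefully spell out details the paper leaves implicit, such as the closedness of $\varphi|U$).

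The genuine gap is exactly where you flag it: finiteness of $J$, hence of $F=L\setminus V$, hence openness of $V$ in $L$. Your proposed route --- show that an infinite $J$ forces a non-trivial convergent sequence in $L$, using c.c.c.\ and ``other structural features'' --- is not a proof, and in the form sketched it cannot work: an infinite set of isolated points in a compact c.c.c.\ (even separable) space need not have any convergent subsequence, as $\beta\omega$ shows, so no purely topological argument from Lemma \ref{ccc} plus the absence of isolated points in $L'$ will produce the required sequence; you would need to exploit the isomorphism with $C(K)$ in an essential, functional-analytic way, and you have not indicated how. The missing step can in fact be closed with the tools already established in the paper, avoiding convergent sequences altogether: by Proposition \ref{S-separable}, $\ker(S)=T^{-1}(\{g\in C(L):g|L'=0\})$, and $\{g\in C(L):g|L'=0\}$ is the closed span of the characteristic functions of the points of $J$, hence isomorphic to $c_0(J)$; on the other hand $S=M_e+W$ with $M_e$ an isomorphism (Proposition \ref{Me-iso}) and $W$ weakly compact, hence strictly singular (Theorem \ref{strictly-singular}). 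On $\ker(S)$ one has $W=-M_e$, so $W$ would be an isomorphism on the infinite-dimensional subspace $\ker(S)$ if $J$ were infinite, contradicting strict singularity. Thus $\ker(S)$ is finite-dimensional, $J$ is finite, $L\setminus V=J\cup\varphi(E)$ is finite and closed, and $V$ is open in $L$, completing the theorem. Without some argument of this kind your proposal does not yet prove the statement.
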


%Note that the above theorem is a strengthening of the special case of theorem by \cite{schlackow} which says that if $K,L$ are perfect compact spaces, $C(K)$ has few %operators and $C(K)\sim C(L)$, then $L$ is homeomorphic to $K$.

%\begin{remark}
%In the case of Theorem \ref{few-operators-modulo-finite} we have $|E|=|F|$. 
%\end{remark}
%\begin{proof}
%Since $U$ and $V$ are homeomorphic, their one-point compactifications are homeomorphic 
%\end{proof}

\begin{corollary}\label{corollary-dimension-equal}
If $\dim(K)=n$ and $K$ is a compact, separable and connected Hausdorff space such that $C(K)$ has few operators, then for each compact Hausdorff space $L$ such that $C(K)\sim C(L)$ we have $\dim(L)=n$.
\end{corollary}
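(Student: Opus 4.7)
The plan is to derive this corollary directly from Theorem \ref{few-operators-modulo-finite} combined with Theorem \ref{dim-equal}, so that essentially no new work is required. The hypotheses on $K$ in the corollary are precisely those of Theorem \ref{few-operators-modulo-finite}, which therefore produces a decomposition $K = U \cup E$, $L = V \cup F$ with $U \subseteq K$, $V \subseteq L$ open, $E \subseteq K$, $F \subseteq L$ finite, together with a homeomorphism $\varphi \colon U \to V$.

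The only minor technicality is that Theorem \ref{dim-equal} additionally requires the open and finite parts to be disjoint inside each space, which is not asserted by Theorem \ref{few-operators-modulo-finite}. I would fix this by enlarging the finite exceptional sets (and shrinking the open ones) in a symmetric way: set
\[
E_1 \;=\; E \cup \varphi^{-1}(F \cap V), \qquad F_1 \;=\; F \cup \varphi(E \cap U),
\]
both finite, and $U_1 = U \setminus E_1$, $V_1 = V \setminus F_1$. A short direct calculation shows $\varphi(U_1) = V \setminus \bigl(\varphi(E \cap U) \cup (F \cap V)\bigr) = V_1$, so $\varphi$ restricts to a homeomorphism $U_1 \to V_1$; moreover $U_1$ and $V_1$ are open (finite sets are closed in Hausdorff spaces), $K = U_1 \cup E_1$, $L = V_1 \cup F_1$, and $U_1 \cap E_1 = V_1 \cap F_1 = \varnothing$ by construction.

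With the hypotheses of Theorem \ref{dim-equal} now satisfied, I conclude $\dim K = \dim L$, and since $\dim K = n$ by assumption, $\dim L = n$. There is no real obstacle here: all of the substantive work is concentrated in Theorem \ref{few-operators-modulo-finite} (where the homeomorphism on $U$ comes from the induced multiplicative operator $\Psi_T$) and in the dimension-theoretic Theorem \ref{dim-equal}, both of which are already established earlier in the paper.
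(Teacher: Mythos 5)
Your proof is correct and follows exactly the paper's route: the paper proves this corollary by citing Theorem \ref{few-operators-modulo-finite} together with Theorem \ref{dim-equal}. Your extra step of enlarging the finite sets to $E_1,F_1$ and shrinking $U,V$ to $U_1,V_1$ so that the disjointness hypothesis of Theorem \ref{dim-equal} is literally satisfied is a harmless (and correct) technical refinement that the paper leaves implicit.
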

\begin{proof}
Use Theorem \ref{few-operators-modulo-finite} and Theorem \ref{dim-equal}.
\end{proof}

\section{Extensions of compact spaces}\label{section-extensions}
In this section we consider the notion of strong extension from \cite{few-operators-main}. We describe the methods of controlling the dimension in constructions of compact spaces using strong extensions. We prove that strong extensions cannot lower the dimension of initial space and we show how to construct extensions that cannot rise the dimension. 
\begin{definition}
Let $K$ be a compact Hausdorff space and $(f_n)_{n\in\omega}$ be a sequence of pairwise disjoint continuous functions $f_n:K\rightarrow [0,1]$. Define 
$$D((f_n)_{n\in\omega}) = \bigcup \{U: U \ \text{is open and} \ \{n : \supp (f_n) \cap U \neq \varnothing \} \ \text{is finite} \}. $$
We say that $L\subseteq K\times [0,1]$ is the extension of $K$ by $(f_n)_{n\in\omega}$ if and only if $L$ is the closure of the graph of $(\sum_{n\in \omega} f_n)|D((f_n)_{n\in\omega})$. We say that this is a strong extension, if the graph of $\sum_{n\in \omega} f_n$ is a subset of $L$.
\end{definition}

\begin{lemma}\cite[Lemma 4.1]{few-operators-main}\label{continuous-sum}
If $(f_n)_{n\in\omega}$ are pairwise disjoint continuous functions on $K$ with values in $[0,1]$, then $\sum_{n\in\omega} f_n$ is well-defined and continuous in the dense open set $D((f_n)_{n\in\omega})$.
\end{lemma}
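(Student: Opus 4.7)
The plan is to first observe the elementary fact that pairwise disjointness of the $f_n$'s forces the open sets $U_n = \{x \in K : f_n(x) > 0\}$ to be pairwise disjoint: if $x \in U_n \cap U_m$ with $n \neq m$, then $(f_n f_m)(x) > 0$, contradicting disjointness. This immediately gives pointwise well-definedness of $\sum_{n \in \omega} f_n$, since at each $x \in K$ at most one summand is nonzero, so the sum is either $0$ or the unique nonzero value $f_n(x)$.

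Next I would handle density of $D((f_n)_{n \in \omega})$. Openness is built into the definition, so the task is to show $K \setminus D$ has empty interior. Suppose toward contradiction that some nonempty open $V \subseteq K \setminus D$; then every nonempty open subset of $V$ must meet $\supp(f_n)$ for infinitely many $n$. In particular $V$ itself meets some $\supp(f_{n_0}) = \overline{U_{n_0}}$, and since $V$ is open it meets $U_{n_0}$ at some point $y_0$. Using continuity of $f_{n_0}$ pick an open $V_0 \subseteq V$ with $y_0 \in V_0 \subseteq U_{n_0}$. Disjointness of the $U_m$'s gives $V_0 \cap U_m = \emptyset$ for $m \neq n_0$, and since $V_0$ is open this upgrades to $V_0 \cap \overline{U_m} = V_0 \cap \supp(f_m) = \emptyset$ for $m \neq n_0$. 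Thus $V_0$ witnesses $y_0 \in D$, contradicting $V \subseteq K \setminus D$.

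For continuity on $D$, let $x_0 \in D$ and pick an open $U \ni x_0$ with the finite set $F = \{n : \supp(f_n) \cap U \neq \emptyset\}$. For $n \notin F$, $f_n$ vanishes on $U$, so on $U$ we have $\sum_{n \in \omega} f_n = \sum_{n \in F} f_n$, which is a finite sum of continuous functions, hence continuous at $x_0$.

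There is essentially no obstacle here; the only subtle point is the upgrade from $V_0 \cap U_m = \emptyset$ to $V_0 \cap \supp(f_m) = \emptyset$, which uses openness of $V_0$ (disjointness of an open set from a set is equivalent to disjointness from its closure). Everything else is routine continuity/openness manipulation.
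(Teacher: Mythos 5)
Your proof is correct. The paper does not reprove this lemma (it is quoted from \cite[Lemma 4.1]{few-operators-main}), but your argument --- disjointness of the cozero sets $U_n=\{f_n>0\}$ giving pointwise well-definedness, the open set $V\cap U_{n_0}$ witnessing membership in $D$ to establish density, and local reduction to a finite sum for continuity --- is the standard one and is complete, including the correct use of openness to pass from $V_0\cap U_m=\varnothing$ to $V_0\cap\overline{U_m}=\varnothing$.
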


\begin{lemma}\cite[Lemma 4.4]{few-operators-main}\label{connected-extension}
Strong extension of a connected compact Hausdorff space is connected. 
\end{lemma}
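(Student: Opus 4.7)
My plan is to argue by contradiction: suppose $L = L_1 \sqcup L_2$ is a partition into disjoint nonempty clopen subsets, and derive a contradiction with the connectedness of $K$. Write $g := \sum_{n\in\omega} f_n$ and $D := D((f_n)_{n\in\omega})$. By Lemma \ref{continuous-sum}, $g|D$ is continuous and $D$ is open and dense in $K$, so the graph $G := \{(x,g(x)) : x \in D\}$ is homeomorphic to $D$ via the projection $\pi : L \to K$ and is dense in $L$ by the definition of extension. Each clopen $L_i$ meets $G$ in a nonempty clopen subset, which under the homeomorphism corresponds to a nonempty clopen subset $D_i$ of $D$; the $D_1, D_2$ are open in $K$ and partition $D$.

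Density of $D$ in $K$ gives $\overline{D_1} \cup \overline{D_2} = K$. If the closures were disjoint they would form a nontrivial clopen partition of $K$, contradicting connectedness; hence there exists $x \in \overline{D_1} \cap \overline{D_2}$, and necessarily $x \in K \setminus D$, since $D_1$ and $D_2$ are disjoint clopen subsets of the open set $D$. Taking nets $a_\alpha \in D_1$ and $b_\beta \in D_2$ converging to $x$, and passing to subnets so that $g(a_\alpha) \to p$ and $g(b_\beta) \to q$ (compactness of $[0,1]$), closedness of the $L_i$ yields $(x,p) \in L_1$ and $(x,q) \in L_2$. In particular $p \neq q$, so the fiber $S_x := \{y \in [0,1] : (x,y) \in L\}$ is partitioned by $L_1 \cap \pi^{-1}(x)$ and $L_2 \cap \pi^{-1}(x)$ into two disjoint nonempty closed subsets.

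The remaining — and principal — step is to show that $S_x$ is connected, which would directly contradict the above partition. My approach would be via the identification
\[ S_x \;=\; \bigcap\bigl\{\,\overline{g(U \cap D)} : U \text{ open in } K,\ x \in U \,\bigr\}, \]
together with the claim that for sufficiently small such $U$ the closure $\overline{g(U \cap D)}$ is a closed subinterval of $[0,1]$; as a decreasing intersection of closed intervals, $S_x$ would then itself be a closed interval, hence connected. This interval property is the main technical obstacle, because $D$ need not be locally connected at $x$, so it cannot be read off from an intermediate-value theorem along continuous paths. Instead it has to be extracted from the arithmetic structure of the disjoint sum $g = \sum_n f_n$: on any such $U$, $g|U\cap D$ coincides with $f_n$ on $U \cap \supp(f_n)$ and vanishes outside $\bigcup_n \supp(f_n)$, and since $x \in K \setminus D$ forces infinitely many supports to accumulate at $x$ while each continuous $f_n : K \to [0,1]$ has interval image (by connectedness of $K$), the values of $g$ near $x$ densely fill an interval $[0, M_U]$; the strong extension hypothesis then places $g(x) \in S_x$ within this interval, completing the picture.
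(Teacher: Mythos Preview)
The paper does not itself prove this lemma but cites \cite{few-operators-main}. Your plan, however, contains a genuine gap: the fiber $S_x$ need \emph{not} be connected, even for a strong extension, so your ``principal step'' cannot be completed. For a counterexample take $K$ to be the comb space $([0,1]\times\{0\}) \cup (\{0\}\times[0,1]) \cup \bigcup_{n\ge 1}(\{1/n\}\times[0,1])$ and set $f_n(1/n,s)=c_n\min(1,2s)$ (with $f_n\equiv 0$ off the $n$-th tooth), where $c_n=1$ for $n$ odd and $c_n=2^{-n}$ for $n$ even. Then $K\setminus D=\{0\}\times[0,1]$; the extension is strong because $g\equiv 0$ on the limit tooth while the even-indexed teeth provide values of $g$ arbitrarily close to $0$ near every point of that tooth; yet for $x=(0,1)$ and any small $U$ one has $g(U\cap D)=\{c_n:n\ge N\}$, whence $S_x=\{0,1\}$. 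Your heuristic appeals to the interval range of $f_n$ on all of $K$, but what you actually need is the range of $f_n$ on a small neighbourhood of $x$, and in this example $f_n$ is constant (equal to $c_n$) there. Note that $L$ is nonetheless connected in this example, simply because $D$ itself is connected and $L=\overline{G}$ with $G\cong D$; so connectedness of $L$ does not pass through connectedness of fibers.

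The argument that does work --- and this is the idea behind the cited reference, in the same spirit as Proposition~\ref{extension-property-E} of the present paper --- avoids fibers and uses instead the connected sets $L_N=\graph\bigl(\sum_{n\le N}f_n\bigr)\cong K$. Suppose $L=A_1\sqcup A_2$ is a nontrivial clopen partition and separate $A_1,A_2$ by disjoint open $U_1,U_2\subseteq K\times[0,1]$. The strong-extension hypothesis gives $L_{N+1}\setminus L_N\subseteq\graph(g)\subseteq L\subseteq U_1\cup U_2$, so the compacta $L_N\setminus(U_1\cup U_2)$ decrease; were they all nonempty, any point in their intersection would lie in $\graph(g)\subseteq L$ yet outside $U_1\cup U_2$, a contradiction. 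Hence $L_N\subseteq U_1\cup U_2$ for all large $N$; connectedness of each $L_N$ together with $L_N\cap L_{N+1}\ne\varnothing$ then forces all of them into one $U_i$. Since every point of $G$ lies in $L_N$ for all sufficiently large $N$, this gives $G\subseteq U_i$, and density of $G$ in $L$ forces $A_j=\varnothing$ for $j\neq i$.
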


Note that there are known examples of extensions of connected compact spaces which are not connected (see \cite{barbeiro-fajardo}), so the assumption that considered extensions are strong is necessary. 

\begin{lemma}\label{extension-separable}
Let $K$ be a separable compact Hausdorff space with countable dense set $Q=\{q_n:\in \omega\}$ and let $L$ be an extension of $K$ with the natural projection $\pi:L\rightarrow K$. Assume that $Q'=\{q'_n: n\in\omega \}$ is a subset of $L$ such that $\pi(q'_n)=q_n$ for every $n\in \omega$. Then $Q'$ is a dense subset of $L$. 
\end{lemma}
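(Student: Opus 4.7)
The plan is to exploit the fact that the graph $G$ of $\bigl(\sum_{n\in\omega} f_n\bigr)|D$ is dense in $L$ by the very definition of extension, and that above points of $D$ the projection $\pi$ is one-to-one, so the lifts $q'_n$ with $q_n\in D$ are forced to have prescribed second coordinates. Density of $Q'$ in $L$ can then be checked on basic open sets of $L$ inherited from $K\times[0,1]$.

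First I would fix a nonempty open set in $L$ of the form $W=(U\times V)\cap L$, with $U\subseteq K$ and $V\subseteq[0,1]$ open; since $G$ is dense in $L$, there is some $x_0\in U\cap D$ with $\sum_{n\in\omega} f_n(x_0)\in V$. Using the definition of $D=D((f_n)_{n\in\omega})$, I would pick an open neighborhood $W_0\subseteq D\cap U$ of $x_0$ on which only finitely many $f_n$ are non-zero, so that the sum $\sum_{n\in\omega}f_n$ is a finite sum of continuous functions on $W_0$ and is in particular continuous there (this is essentially Lemma \ref{continuous-sum}). Shrinking $W_0$ if needed, I may also assume $\sum_{n\in\omega}f_n(W_0)\subseteq V$.

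Next, since $Q$ is dense in $K$, there is some $q_m\in W_0$. Because $q_m\in D$ and $L=\overline{G}$, a short argument with nets shows that the fiber $\pi^{-1}(q_m)$ consists of the single point $(q_m,\sum_{n\in\omega} f_n(q_m))$: any $(q_m,y)\in L$ is a limit of points $(x_\alpha,\sum_{n\in\omega}f_n(x_\alpha))\in G$ with $x_\alpha\in D$, and eventually $x_\alpha\in W_0$, whence by continuity of the sum on $W_0$ we get $y=\sum_{n\in\omega}f_n(q_m)$. Since $\pi(q'_m)=q_m$, it follows that $q'_m=(q_m,\sum_{n\in\omega}f_n(q_m))$, so by the choice of $W_0$ we have $q'_m\in U\times V$, i.e. $q'_m\in W\cap Q'$.

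I expect the only subtle point to be the identification of the fiber $\pi^{-1}(q_m)$ for $q_m\in D$; everything else is a straightforward translation between density in $L$ and density in the dense open graph $G$. Once the fiber is pinned down, the lifts at points of $Q\cap D$ are unambiguous, which is what lets an arbitrary dense subset of $K$ lift to a dense subset of $L$ regardless of the choices made above points of $K\setminus D$.
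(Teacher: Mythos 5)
Your argument is correct and follows essentially the same route as the paper's proof: both rest on the density of the graph over $D((f_n)_{n\in\omega})$, the fact that $\pi$ is injective (indeed a homeomorphism) over $D$, and the density of $Q\cap D$ in $D$. You merely unwind into an explicit basic-open-set and fiber computation what the paper handles by citing Lemma 4.3 of \cite{few-operators-main}.
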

\begin{proof}
Let $(f_n)_{n\in\omega}$ be a sequence of pairwise disjoint continuous functions such that $L$ is the extension of $K$ by $(f_n)_{n\in\omega}$. By \cite[Lemma 4.3 a)]{few-operators-main} $\pi^{-1}(D((f_n)_{n\in\omega})$ is dense in $L$. Moreover, $\pi|\pi^{-1}(D((f_n)_{n\in\omega}))$ is homeomorphism as a projection of graph of continuous function onto its domain. Since $Q$ is dense in $K$ and $D((f_n)_{n\in\omega})$ is open, $Q\cap D((f_n)_{n\in\omega})$ is dense in $D((f_n)_{n\in\omega})$. Hence we get that $\pi^{-1}(Q\cap D((f_n)_{n\in\omega}))$ is dense in $L$. But if $q_n\in  D((f_n)_{n\in\omega})$, then $\pi^{-1}(q_n)=\{q'_n\}$, so $Q'\supseteq \pi^{-1}(Q\cap D((f_n)_{n\in\omega})$ is also dense in $L$. 
\end{proof}

The following lemma is a special case of \cite[Lemma 4.5]{few-operators-main}.
\begin{lemma}\label{many-strong-extensions}
Suppose that $K$ is a compact metric space and that for every $n\in\omega$ $X^n_1,X^n_2$ are disjoint relatively discrete subsets of $K$ such that $\overline{X^n_1}\cap\overline{X^n_2}\neq \varnothing$. Let $(f_n)_{n\in\omega}$ be a pairwise disjoint sequence of continuous functions from $K$ into $[0,1]$. For an infinite subset $B\subseteq \omega$ denote by $K(B)$ the extension of $K$ by $(f_n)_{n\in B}$. For $i=0,1$ and $n\in \omega$ put 
$$X^n_i(B)=\{(x,t):x\in X^n_i, t=\sum_{k\in B} f_k(x)\}.$$
Then there is an infinite $N\subseteq \omega$ such that for every $B\subseteq N$:
\begin{enumerate}
    \item $K(B)$ is a strong extension of $K$ by $(f_n)_{n\in B}$,
    \item $\overline{X^n_1(B)}\cap \overline{X^n_2(B)}\neq \varnothing$ for every $n\in\omega$, where the closures are taken in $K(B)$. 
\end{enumerate}
\end{lemma}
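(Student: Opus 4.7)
The plan is to build $N$ by a diagonal recursion that produces the desired sparseness for both (1) and (2) simultaneously. Since $K$ is a compact metric space, for each $n \in \omega$ I fix a point
$$p_n \in \overline{X_1^n} \cap \overline{X_2^n},$$
sequences $(a_m^n)_m \subseteq X_1^n$ and $(b_m^n)_m \subseteq X_2^n$ converging to $p_n$, and a nested neighborhood base $(V_m^n)_m$ of $p_n$ with $\diam(V_m^n) \to 0$. I also fix a countable base $(W_i)_i$ of $K$.

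The target property driving the recursion is that each $p_n$ lies in $D((f_k)_{k \in N})$. Since $D$ is inclusion-reversing in its indexing set, this transfers to $p_n \in D((f_k)_{k \in B})$ for every infinite $B \subseteq N$, and Lemma \ref{continuous-sum} then makes $\sum_{k \in B} f_k$ continuous at each $p_n$. Condition (2) follows at once: both $(a_m^n, \sum_{k \in B} f_k(a_m^n))$ and $(b_m^n, \sum_{k \in B} f_k(b_m^n))$ converge to the common point $(p_n, \sum_{k \in B} f_k(p_n))$, which therefore lies in $\overline{X_1^n(B)} \cap \overline{X_2^n(B)}$. For condition (1), a standard density argument shows that, once $N$ is also sparse with respect to the basic open sets $W_i$, every $x \in K$ can be approximated from inside $D((f_k)_{k \in B})$ by points whose $\sum_{k \in B} f_k$-values converge to $\sum_{k \in B} f_k(x)$ -- using the open set $\{f_{k_0} > 0\}$ when $f_{k_0}(x) > 0$ for the unique $k_0 \in B$, and the complement of all supports otherwise -- so the graph of $\sum_{k \in B} f_k$ is contained in $K(B)$.

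To realize the recursion, I enumerate the countably many constraints of the form ``$\supp(f_{n_j}) \cap V_m^n = \varnothing$'' (for condition (2)) and the analogous constraints involving the $W_i$ (for condition (1)), and at stage $j$ pick $n_j > n_{j-1}$ satisfying the $j$-th constraint. Existence of such $n_j$ at every stage reduces to the observation that each single constraint excludes only finitely many candidate indices, which follows from the compact-metric structure and the continuity of the $f_k$.

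The main obstacle is precisely this ``cofinite-satisfiability'' of the individual constraints: if $p_n$ happens to lie on the boundary of infinitely many supports $\supp(f_k)$, one cannot directly force $p_n \in D((f_k)_{k \in N})$ by thinning, and the recursion has to be reinforced by refining the sequences $(a_m^n), (b_m^n)$ -- for instance by choosing them inside $\supp(f_{k^*})$ for a distinguished $k^* \in N$ so that the partial sums along both sequences agree regardless of the thinness of $N$ elsewhere. Carrying out this refinement compatibly with the diagonal bookkeeping is the principal technical point of the argument.
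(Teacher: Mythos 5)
You correctly note that the paper itself does not prove Lemma \ref{many-strong-extensions} (it is quoted as a special case of \cite[Lemma 4.5]{few-operators-main}), so your argument stands or falls on its own, and it has a genuine gap exactly where you yourself locate ``the principal technical point''. Your whole mechanism for (2) is to force each $p_n$ into $D((f_k)_{k\in N})$ by thinning, and your recursion rests on the claim that a constraint of the form $\supp(f_{n_j})\cap V^n_m=\varnothing$ excludes only finitely many candidate indices. Both fail whenever the supports $\supp(f_k)$ accumulate at $p_n$: then every neighbourhood of $p_n$ meets infinitely many supports, so $p_n\notin D((f_k)_{k\in N})$ for \emph{every} infinite $N$, and each such constraint excludes infinitely many candidates. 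This is not a degenerate case but the case the lemma is used for: in Lemma \ref{lemma-for-odd} all the supports cluster at the chosen point $x$, and the intersection witness $(x,1)$ does not lie over $D$. Your sketched repair (choosing the approximating sequences inside $\supp(f_{k^*})$) is not available, since those sequences must be drawn from the prescribed sets $X^n_1,X^n_2$, which need not meet any particular support. What does work is to control \emph{values} rather than membership in $D$: if some $f_{k_0}(p_n)>0$ then $p_n\in\{f_{k_0}>0\}\subseteq D((f_k)_{k\in B})$ and your easy argument applies; if all $f_k(p_n)=0$, one recursively ``promises'', at stage $j$ and for each pair $n\le j$, points $a\in X^n_1$, $b\in X^n_2$ within $1/j$ of $p_n$ on which the finitely many already chosen $f_{n_i}$ are $<1/j$ (continuity), and chooses every later index so that its function vanishes at all points promised so far --- pairwise disjointness means each promised point rules out at most one index, so this costs only finitely many candidates per stage. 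Then $\sum_{k\in B}f_k\le\sum_{k\in N}f_k<1/j$ at the stage-$j$ points for every $B\subseteq N$, both lifted sequences converge to $(p_n,0)$, and that point lies in $K(B)$ once (1) is known.

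The argument for (1) is also unsupported at its only nontrivial step. If $f_{k_0}(x)>0$ for some $k_0\in B$, then $\{f_{k_0}>0\}$ is disjoint from all other supports, so $x\in D((f_k)_{k\in B})$ and there is nothing to prove; the real issue is a point $x$ with $f_k(x)=0$ for all $k\in B$ at which the supports of $(f_k)_{k\in B}$ accumulate. You propose to approximate such $x$ ``from the complement of all supports'', but $x$ need not lie in the closure of that complement: take $K$ a convergent sequence with its limit, $f_k$ the characteristic function of the $k$-th term, and a selection that discards only finitely many indices near the limit (nothing in your $W_i$-constraints prevents this); then every point near the limit, other than the limit itself, has $\sum_{k\in B}f_k=1$ and the extension is not strong. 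One needs a device guaranteeing approach points with small sum, e.g.\ first pass to a subsequence along which the supports converge in the Hausdorff metric and then keep only every other term, so that for each $B\subseteq N$ the discarded supports cluster at every point where the selected ones do and supply points $y\in D((f_k)_{k\in B})$ with $\sum_{k\in B}f_k(y)=0$; alternatively use points where a selected $f_k$ is positive but small. With such a mechanism your inclusion-reversal remark finishes (1), since $D((f_k)_{k\in N})\subseteq D((f_k)_{k\in B})$ and $\sum_{k\in B}f_k\le\sum_{k\in N}f_k$, so strongness for $N$ passes to all $B\subseteq N$; but as written the proposal is missing the two ideas that actually carry the lemma.
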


\begin{proposition}\label{extension-property-E}
If $L$ is a strong extension of a compact Hausdorff space $K$ with the natural projection $\pi:L\rightarrow K$, then $\pi$ is essential-preserving. 
\end{proposition}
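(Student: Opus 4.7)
My plan is to argue by contradiction using Theorem \ref{essential-char}(3). Suppose $\{(A_i,B_i):i\le n\}$ is essential in $K$ but $\{(\pi^{-1}(A_i),\pi^{-1}(B_i))\}$ is not essential in $L$; fix witnessing disjoint closed sets $C_i,D_i\subseteq L$ with $\pi^{-1}(A_i)\subseteq C_i$, $\pi^{-1}(B_i)\subseteq D_i$, and $\bigcup_{i=1}^{n}(C_i\cup D_i)=L$. The aim is to push these down to disjoint closed subsets $C_i',D_i'\subseteq K$ covering $K$ with $A_i\subseteq C_i'$, $B_i\subseteq D_i'$, which by the same theorem will contradict essentialness in $K$.

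The decisive tool is the set-theoretic section $s:K\to L$ given by $s(x)=(x,\sum_{n\in\omega}f_n(x))$, well-defined into $L$ precisely because $L$ is a \emph{strong} extension (the graph of $\sum_n f_n$ is part of $L$ by hypothesis), and satisfying $\pi\circ s=\mathrm{id}_K$. Hence $\pi$ is a continuous closed surjection ($L$ compact, $K$ Hausdorff, surjectivity from $s$); moreover its restriction to $\pi^{-1}(D((f_n)_{n\in\omega}))$ is a homeomorphism onto the dense open set $D((f_n)_{n\in\omega})\subseteq K$, so fibers of $\pi$ over $D((f_n)_{n\in\omega})$ are singletons. The natural candidates $C_i':=\pi(C_i)$ and $D_i':=\pi(D_i)$ are closed in $K$ (closed map), cover $K$ (surjectivity), and contain $A_i,B_i$ respectively (from $\pi^{-1}(A_i)\subseteq C_i$ combined with $\pi\circ s=\mathrm{id}_K$).

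The remaining—and in my view the main—difficulty is disjointness $C_i'\cap D_i'=\varnothing$: an overlap point $x$ would have $\pi^{-1}(x)$ meeting both $C_i$ and $D_i$, forcing the fiber to be non-singleton and hence $x\in K\setminus D((f_n)_{n\in\omega})$. To rule this out I would first thicken via normality of $L$ to disjoint opens $U_i\supseteq C_i$, $V_i\supseteq D_i$, with $Q_i:=L\setminus(U_i\cup V_i)$ a partition between $\pi^{-1}(A_i)$ and $\pi^{-1}(B_i)$, and then replace the candidates by the closures in $K$
\[
\widetilde C_i:=\overline{\{x\in D((f_n)_{n\in\omega}):s(x)\in U_i\}},\qquad \widetilde D_i:=\overline{\{x\in D((f_n)_{n\in\omega}):s(x)\in V_i\}}.
\]
The inclusions $A_i\subseteq \widetilde C_i$ and $B_i\subseteq\widetilde D_i$ follow from the strong-extension identity $L=\overline{s(D((f_n)_{n\in\omega}))}$, which forces $s(x)\in\overline{s(W\cap D((f_n)_{n\in\omega}))}$ for every neighborhood $W$ of $x$ in $K$; covering of $K$ follows since $\{U_i,V_i:i\le n\}$ covers $L$. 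Disjointness is the hard step: a common limit point would, by compactness and passing to subnets, produce fiber points of $\pi^{-1}(x)$ in $\overline{U_i}$ and in $\overline{V_i}$, and since $\overline{U_i}\cap\overline{V_i}\subseteq Q_i$ in $L$, any coincidence of these limits lies in the partition set $Q_i$; one has to rule out distinct limit points by a further refinement that absorbs the nowhere-dense overlap set $\widetilde C_i\cap\widetilde D_i\subseteq K\setminus D((f_n)_{n\in\omega})$ into a single side while preserving closedness. Once this absorption is carried out, Theorem \ref{essential-char}(3) applied to the resulting $\{(\widetilde C_i,\widetilde D_i)\}$ yields the desired contradiction.
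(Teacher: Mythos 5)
Your reduction to $K$ is set up correctly as far as it goes: the section $s(x)=(x,\sum_n f_n(x))$ lands in $L$ by strongness, $\pi$ is a closed surjection, and your sets $\widetilde C_i=\overline{\{x\in D((f_n)_n):s(x)\in U_i\}}$, $\widetilde D_i=\overline{\{x\in D((f_n)_n):s(x)\in V_i\}}$ do contain $A_i$, $B_i$ and cover $K$ (and, if you shrink $U_i,V_i$ to have disjoint closures, they even miss $B_i$, $A_i$ respectively). But the proof stops exactly where the proposition has its content: you never establish $\widetilde C_i\cap\widetilde D_i=\varnothing$, and the proposed fix --- ``a further refinement that absorbs the nowhere-dense overlap set into a single side while preserving closedness'' --- is not an argument. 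The overlap $E_i=\widetilde C_i\cap\widetilde D_i$ sits over $K\setminus D((f_n)_n)$, which for a general strong extension can be large; removing $E_i$ from one side destroys closedness, and re-closing can reintroduce intersection points, so there is no evident absorption procedure, and none is supplied. A telling symptom that the gap is essential: every step you actually verify uses only $s|D((f_n)_n)$, i.e.\ only the definition of an extension, never the strongness hypothesis in a substantive way; since extensions that are not strong can behave badly (the paper notes they need not even preserve connectedness), the strong-extension hypothesis must be consumed precisely at the step you leave open.

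For comparison, the paper's proof attacks the same contradiction hypothesis (disjoint closed $C_i\supseteq\pi^{-1}(A_i)$, $D_i\supseteq\pi^{-1}(B_i)$ covering $L$, thickened to disjoint opens $U_i,V_i$ in $K\times[0,1]$) but never tries to build a non-essentiality witness down in $K$. Instead it works with the graphs $L_k$ of the finite partial sums $\sum_{i\le k}f_i$, each a homeomorphic copy of $K$ in which $\{(\pi_k^{-1}(A_i),\pi_k^{-1}(B_i))\}$ is still essential; essentialness forces each $L_k$ to meet the complement of $\bigcup_i(U_i\cup V_i)$ (after a nested-compactness argument handling the alternative that some $\pi_k^{-1}(A_i)$ escapes $U_i$), and the decreasing compact sets $F_k=L_k\setminus\bigcup_i(U_i\cup V_i)$ have a common point which lies on the graph of the full sum --- here, and only here, strongness puts that point into $L$, contradicting $L\subseteq\bigcup_i(U_i\cup V_i)$. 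If you want to salvage your route, you would need a genuine replacement for that mechanism at the points of $K\setminus D((f_n)_n)$; as written, the proposal is incomplete.
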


\begin{proof}
Let $(f_k)_{k\in \omega}$ be such that $L$ is a strong extension of $K$ by $(f_k)_{k\in \omega}$. \\ Let $\{(A_i,B_i): i=1,2,\dots,n\}$ be an essential family in $K$ and assume that $\{(\pi^{-1}(A_i), \pi^{-1}(B_i)):i=1,2,\dots,n\}$ is not essential in $L$. By Theorem \ref{essential-char} there are closed sets $C_i\supseteq \pi^{-1}(A_i), D_i\supseteq \pi^{-1}(B_i)$ such that $C_i\cap D_i =\varnothing$ for each $i\leq n$ and
$$\bigcup_{i=1}^n (C_i\cup D_i)=L.$$
Since $C_i, D_i$ are compact, there are sets $U_i,V_i$ open in $K\times[0,1]$ such that $C_i\subseteq U_i$, $D_i\subseteq V_i$ and $U_i\cap V_i=\varnothing$ for every $i\leq n$. 

For each $k\in \omega$ denote by $L_k$ the graph of $\sum_{i\leq k} f_i$ and let $\pi_k:L_k\rightarrow K$ be the projection onto $K$. 
\begin{claim}
For every $k\in\omega$ we have $$L_k\backslash \bigcup_{i=1}^n(U_i\cup V_i)\neq \varnothing.$$
\end{claim}

\begin{proof2} Assume that there is $N$ such that $$L_N\subseteq \bigcup_{i=1}^n(U_i\cup V_i).$$ Then for every $k\geq N$ 
\begin{gather}
    L_k\backslash L_N= \graph({\sum_{i=N+1}^k f_i| \supp(\sum_{i=N+1}^k f_i)})\subseteq L\subseteq \bigcup_{i=1}^n(U_i\cup V_i)
\end{gather} 
(the first equality holds, because the supports of $f_i$'s are pairwise disjoint), so we have 
$$L_k\subseteq \bigcup_{i=1}^n(U_i\cup V_i).$$
Put $A_i^k= \pi_k^{-1}(A_i), B_i^k= \pi_k^{-1}(B_i)$ and observe that the family $\{(A_i^k,B_i^k): i\leq n\}$ is essential in $L_k$ since $\pi_k$ is a homeomorphism. Hence there is $i\leq n$ such that $A_i^k\nsubseteq U_i$ or $B_i^k\nsubseteq V_i$. Indeed, otherwise $U_i\cap L_k, V_i\cap L_k$ would be disjoint open subsets of $L_k$ with 
$$\bigcup_{i=1}^n((U_i\cap L_k)\cup (V_i\cap L_k))=L_k,$$
which contradicts the fact that $\{(A_i^k,B_i^k): i\leq n\}$ is essential (cf. Theorem \ref{essential-char}). 
Without loss of generality there are infinitely many $k$ such that $A_1^k\backslash U_1\neq \varnothing$. 
For every $k\in \omega$ we have $$A_1^{k+1}\backslash A_1^k = \pi_{k+1}^{-1}(A_1)\backslash \pi_k^{-1}(A_1)=\graph(f_{k+1}|(A_1\cap \supp(f_{k+1}))\subseteq \pi^{-1}(A_1)\subseteq U_1.$$
In particular $(A_1^k\backslash U_1)_{k\in\omega}$ form a decreasing sequence of non-empty compact sets. Hence $$A=\bigcap_{k=1}^\infty A_1^k\backslash U_1\neq \varnothing.$$ We have $A\subseteq L$ since if $(x,t)\in A$, then $f_k(x)=0$ for all $k$, so $\sum_{k\in\omega} f_k(x)=0$ and hence $(x,t)=(x,0)$ is an element of the graph of $\sum_{k\in\omega} f_k$ which is a subset of $L$. Moreover $A\subseteq A_1\times[0,1]$, so $A\subseteq (A_1\times[0,1])\cap L=\pi^{-1}(A_1)$ which contradicts the assumption that $\pi^{-1}(A_1)\subseteq U_1$ and completes the proof of the claim. 
\end{proof2}
To finish the proof of the proposition put $$F_k=L_k\backslash \bigcup_{i=1}^n(U_i\cup V_i)$$ and observe that $(F_k)_{k\in\omega}$ is a decreasing sequence of non-empty compact sets (by (1) from the claim), so as in the case of the set $A$ from the claim we get that
$$F=\bigcap_{k=1}^\infty F_k$$ is a non-empty subset of the graph of  $\sum_{k\in\omega} f_k$, so $F\subseteq L$ (because the extension is strong), which is a contradiction, since $F$ is disjoint from $\bigcup_{i\leq n} (U_i\cup V_i)\supseteq L$. 
\end{proof}

\begin{lemma}\label{extension-dimension}
Suppose that $K$ is a compact metric space with $0<\dim(K)\leq n$ and $f_k:K\rightarrow [0,1]$ are pairwise disjoint continuous functions such that the set $$ Z=K\backslash D((f_k)_{k\in\omega}) $$ 
is zero-dimensional. Assume that $L$ is a strong extension of $K$ by $(f_k)_{k\in\omega}$. Then $\dim L\leq n$.
\end{lemma}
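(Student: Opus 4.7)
The plan is to split $L$ via the natural projection $\pi:L\to K$ into the part over $D=D((f_k)_{k\in\omega})$ and the part over the zero-dimensional complement $Z=K\setminus D$, and then apply the countable sum theorem. The key observation is that $\pi^{-1}(D)$ is homeomorphic to $D$ itself, while $\pi^{-1}(Z)$ is small for the trivial dimensional reason that $Z\times[0,1]$ is at most one-dimensional.

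First I would check that $\pi^{-1}(D)=\graph((\sum_k f_k)|D)$. Since $L$ is the closure (in $K\times[0,1]$) of this graph, the inclusion $\supseteq$ is clear. For the reverse inclusion, if $(x,t)\in L$ with $x\in D$, then there is a sequence $(x_m,g(x_m))\to (x,t)$ where $g=\sum_k f_k$; continuity of $g$ on the open set $D$ (Lemma \ref{continuous-sum}) forces $t=g(x)$. In particular, $\pi|_{\pi^{-1}(D)}$ is a homeomorphism onto $D$.

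Next I would decompose $D$. Since $K$ is a compact metric space, $D$ is $F_\sigma$ in $K$: write $D=\bigcup_{k\in\omega}F_k$ with each $F_k$ closed in $K$. Then $\pi^{-1}(F_k)$ is closed in $L$ and, via the previous step, homeomorphic to $F_k\subseteq K$. By Theorem \ref{closed-subspace}, $\dim\pi^{-1}(F_k)=\dim F_k\le\dim K\le n$ for every $k$. For the complementary part, $\pi^{-1}(Z)$ is closed in $L$ and sits inside $Z\times[0,1]$; since $Z$ is compact zero-dimensional, Theorem \ref{product-dim} gives $\dim(Z\times[0,1])\le 1$, and then Theorem \ref{closed-subspace} yields $\dim\pi^{-1}(Z)\le 1\le n$ (using the hypothesis $n\ge\dim K\ge 1$).

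Finally, writing
\[
L=\pi^{-1}(Z)\cup\bigcup_{k\in\omega}\pi^{-1}(F_k),
\]
we have expressed the compact (hence normal) space $L$ as a countable union of closed subsets each of dimension at most $n$, so Theorem \ref{countable-sum-theorem} gives $\dim L\le n$. The only subtle point is the identification $\pi^{-1}(D)=\graph(g|D)$; once that is in hand the rest is an application of standard dimension theory, and the strongness of the extension is not actually used in this proof (it was used to guarantee connectedness in Lemma \ref{connected-extension} and essential-preservation in Proposition \ref{extension-property-E}, but for the upper bound on dimension the metric $F_\sigma$-decomposition of $D$ suffices).
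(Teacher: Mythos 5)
Your proof is correct and follows essentially the same route as the paper: decompose $L$ as $\pi^{-1}(D((f_k)_{k\in\omega}))\cup\pi^{-1}(Z)$, use that $\pi^{-1}(D((f_k)_{k\in\omega}))$ is homeomorphic to the open (hence $F_\sigma$) set $D((f_k)_{k\in\omega})\subseteq K$ to get countably many closed pieces of dimension at most $n$, bound $\dim\pi^{-1}(Z)\le 1\le n$ via $Z\times[0,1]$, and conclude with the countable sum theorem. Your extra verification that $\pi^{-1}(D((f_k)_{k\in\omega}))$ equals the graph of $(\sum_k f_k)|D((f_k)_{k\in\omega})$, and your observation that strongness of the extension is not needed for this upper bound, are both accurate.
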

\begin{proof}
Let $\pi$ be the natural projection from $L$ onto $K$.
$\pi^{-1}(D((f_k)_{k\in\omega}))$ is an open subset of a metric space, so it is a union of countably many closed sets, each of dimension at most $n$ since $\pi^{-1}(D((f_k)_{k\in\omega}))$ is homeomorphic to $D((f_k)_{k\in\omega})$ (cf. Theorem \ref{closed-subspace}). The set $\pi^{-1}(Z)$ is included in $Z\times [0,1]$ so $\dim \pi^{-1}(Z)\leq 1\leq n$ by Theorem \ref{product-dim}. Hence $L=\pi^{-1}(D((f_k)_{k\in\omega}))\cup \pi^{-1}(Z)$ is a countable union of closed sets of dimension at most $n$. Now Theorem \ref{countable-sum-theorem} gives the inequality $\dim L\leq n$. 
\end{proof}

\begin{corollary}\label{corollary-dimension}
Let $\gamma$ be an ordinal number. Suppose that $\{K_\alpha:\alpha<\gamma \}$ is an inverse system of compact Hausdorff spaces such that: 
\begin{itemize}
    \item for every $\alpha$ the map $\pi^{\alpha+1}_\alpha: K_{\alpha+1}\rightarrow K_\alpha$ is a strong extension by pairwise disjoint continuous functions $(f_n^\alpha)_{n\in\omega}$ and the set $Z_\alpha= K_\alpha \backslash D((f_n^\alpha)_{n\in\omega})$ is zero-dimensional,
    \item if $\alpha$ is a limit ordinal, then $K_\alpha$ is the inverse limit of $\{K_\beta:\beta<\alpha\}$. 
\end{itemize}
Denote by $K_\gamma$ the inverse limit of $\{K_\alpha:\alpha<\gamma \}$. Then $\dim K_\gamma= \dim K_1$.
\end{corollary}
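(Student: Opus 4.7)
The proof naturally splits into the two inequalities $\dim K_\gamma \geq \dim K_1$ and $\dim K_\gamma \leq \dim K_1$. For the lower bound, my plan is to observe that each bonding map $\pi^{\alpha+1}_\alpha$ is a strong extension, hence essential-preserving by Proposition~\ref{extension-property-E}, and surjective (the strong-extension hypothesis places the graph of $\sum_{n\in\omega} f^\alpha_n$ inside $K_{\alpha+1}$, and this graph projects onto $K_\alpha$). Theorem~\ref{property-E} then gives that the canonical projection $\pi^\gamma_1\colon K_\gamma\rightarrow K_1$ is essential-preserving, and applying Theorem~\ref{dimension-low-bound} yields $\dim K_\gamma \geq \dim K_1$.

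For the reverse inequality, set $n = \dim K_1$ and proceed by transfinite induction on $\alpha \leq \gamma$ to show $\dim K_\alpha \leq n$. The base case $\alpha = 1$ is the hypothesis. At a limit ordinal $\alpha$ the space $K_\alpha$ is an inverse limit of the $K_\beta$ for $\beta<\alpha$, each of dimension at most $n$ by the inductive assumption, so Theorem~\ref{inverse-dimension} gives $\dim K_\alpha \leq n$. At a successor $\alpha+1$, Lemma~\ref{extension-dimension} applies directly to the strong extension $K_{\alpha+1}$ of $K_\alpha$ by $(f^\alpha_n)_n$, whose exceptional set $Z_\alpha$ is zero-dimensional by assumption, and yields $\dim K_{\alpha+1} \leq n$.

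The main subtlety I anticipate is that Lemma~\ref{extension-dimension} requires $K_\alpha$ to be a compact \emph{metric} space of positive dimension, whereas the hypotheses of the corollary only give compact Hausdorff. In the intended application ($\gamma = \omega_1$ and $K_1$ a finite-dimensional cube), each $K_\alpha$ with $\alpha<\omega_1$ is a countable inverse limit of compact metric spaces and hence itself metric, so the successor step of the induction goes through for every countable $\alpha$; the final jump to $K_{\omega_1}$ is absorbed by the limit clause, which uses only Theorem~\ref{inverse-dimension} and needs no metric input. The edge case $\dim K_1 = 0$ is treated directly: all $Z_\alpha$ and all $D((f^\alpha_n)_n)$ are zero-dimensional, so an $F_\sigma$ decomposition of $\pi^{-1}(D((f^\alpha_n)_n))$ together with a refinement of the bound on $\pi^{-1}(Z_\alpha)$ (now contained in a zero-dimensional slice rather than in $Z_\alpha\times[0,1]$) combine via Theorem~\ref{countable-sum-theorem} to give $\dim K_{\alpha+1} = 0$.
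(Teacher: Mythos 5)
Your two inequalities are obtained exactly as in the paper: Proposition \ref{extension-property-E} together with Theorem \ref{property-E} for $\dim K_\gamma\geq \dim K_1$, and Lemma \ref{extension-dimension} together with Theorem \ref{inverse-dimension}, organized as a transfinite induction, for the reverse inequality. Your remark that Lemma \ref{extension-dimension} formally requires the $K_\alpha$ to be metrizable, and that in the intended application every $K_\alpha$ with $\alpha<\omega_1$ is a countable inverse limit of metrizable compacta while the final step is a limit step, is a correct and worthwhile caveat that the paper leaves implicit.

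The one genuine flaw is your treatment of the edge case $\dim K_1=0$. The claim that $\pi^{-1}(Z_\alpha)$ is then contained in a zero-dimensional slice is false: the fiber of the extension over a point $z\in Z_\alpha$ is $\{z\}$ times the set of cluster values of $\sum_{n} f^\alpha_n$ at $z$ along $D((f^\alpha_n)_n)$, and this set can be a nondegenerate interval. For instance, let $K=\{z\}\cup\bigcup_n C_n$ be a compact metric space in which the $C_n$ are pairwise disjoint clopen Cantor sets converging to $z$, and let $f_n$ be supported in $C_n$ with $f_n[C_n]=[0,1]$; then $Z=\{z\}$ is zero-dimensional, the extension is strong (the graph of $\sum_n f_n$ contributes $(z,0)$, which lies in the closure), yet the extension contains $\{z\}\times[0,1]$ and so has dimension $1$. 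Thus a strong extension of a zero-dimensional compact metric space with zero-dimensional exceptional set can raise the dimension, which is exactly why Lemma \ref{extension-dimension} carries the hypothesis $0<\dim K$; the successor step of your induction genuinely fails at dimension zero, and the corollary itself should be read with the standing assumption $\dim K_1>0$ (harmless, since it is only applied with $K_1=[0,1]^k$, $k>0$). Apart from this, your argument is the paper's argument.
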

\begin{proof}
The inequality $\dim K_\gamma \geq \dim K_1$ follows from Proposition \ref{extension-property-E} and Theorem \ref{property-E}. The inequality $\dim K_\gamma \leq \dim K_1$ follows from Lemma \ref{extension-dimension} and Theorem \ref{inverse-dimension}.
\end{proof}

\section{The main construction}\label{section-construction}
\begin{theorem}\label{weak-multiplier}\cite[Lemma 2.4]{koszmider-shelah}
Suppose that $K$ is a compact Hausdorff space. If a bounded linear operator $T:C(K)\rightarrow C(K)$ is not a weak multiplier, then there are $\delta>0$, a pairwise disjoint sequence $(g_n)_{n\in\omega}\subseteq C_I(K)$ and pairwise disjoint open sets $(V_n)_{n\in\omega}$ such that 
$$\supp(g_n)\cap V_m=\varnothing$$ for all $n,m\in \omega$ and $$|T(g_n)|V_n|>\delta$$
for all $n\in\omega$. 
\end{theorem}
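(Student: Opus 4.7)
My strategy is to dualize and then apply a Grothendieck--Dieudonn\'e type extraction. For $x\in K$ set $\mu_x:=T^*\delta_x$, a Radon measure on $K$ with $T(f)(x)=\int f\,d\mu_x$ and $\|\mu_x\|\leq \|T\|$. Let $g(x):=\mu_x(\{x\})$, a bounded Borel function, and $\nu_x:=\mu_x-g(x)\delta_x$, so that $\nu_x(\{x\})=0$ for every $x$. The key structural observation is that $T$ is a weak multiplier precisely when the family $\mathcal{F}:=\{\nu_x:x\in K\}\subseteq C(K)^*$ is relatively weakly compact: this is exactly what is needed to write $T^*=gI+S$ with $S:\delta_x\mapsto \nu_x$ extending to a weakly compact operator on $C(K)^*$. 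Hence if $T$ is not a weak multiplier, $\mathcal F$ fails to be relatively weakly compact, and by the standard characterization of weak compactness in the dual of a $C(K)$-space (the dual counterpart of Theorem \ref{weakly-compact-thm}), there exist $\delta'>0$, a pairwise disjoint sequence $(W_n)_{n\in\omega}$ of open subsets of $K$, and points $y_n\in K$ satisfying $|\nu_{y_n}|(W_n)>\delta'$ for every $n$.

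\textbf{Building the sequences.} Using the Hahn decomposition of $\nu_{y_n}$ and inner regularity of $\mu_{y_n}$, shrink each $W_n$ to a compact set $A_n\subseteq W_n$ with $|\mu_{y_n}(A_n)|>\delta'/2$; since $\nu_{y_n}(\{y_n\})=0$, one may arrange $y_n\notin A_n$. Apply Urysohn's lemma to obtain $g_n\in C_I(K)$ with $g_n\equiv 1$ on $A_n$ and $\supp g_n\subseteq U_n$, for some open $U_n$ with $A_n\subseteq U_n\subseteq \overline{U_n}\subseteq W_n$ chosen small enough that, by inner regularity, $|T(g_n)(y_n)-\mu_{y_n}(A_n)|<\delta'/4$. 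Then $|T(g_n)(y_n)|>\delta:=\delta'/4$, so by continuity of $T(g_n)$ there is an open neighborhood $V_n$ of $y_n$ on which $|T(g_n)|>\delta$. The supports $\supp g_n$ are pairwise disjoint since they lie inside the pairwise disjoint $W_n$.

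\textbf{Cross-disjointness and main obstacle.} The truly delicate requirement is $\supp g_n\cap V_m=\varnothing$ for \emph{all} pairs $n,m\in\omega$. To achieve it one arranges in advance that each $y_n$ lies outside $\bigcup_{m\neq n}\overline{W_m}$; granted this, every $V_m$ can be chosen so small that $\overline{V_m}$ misses all $\overline{U_n}\subseteq\overline{W_n}$ for $n\neq m$, and the case $n=m$ is handled by shrinking $U_m$ to avoid the point $y_m\notin A_m$. The main obstacle is therefore precisely to force $y_n\notin\bigcup_{m\neq n}\overline{W_m}$: a priori some $\mu_{y_n}$ may have atoms at other distinguished points $y_m$, so one cannot simply subtract countable sets from the $W_m$ without losing mass. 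The standard remedy is an iterated diagonal: after the initial extraction, pass to a subsequence and replace each $W_n$ by $W_n\setminus\bigcup_{m\neq n} B_m$, where $B_m$ is an open neighborhood of $y_m$ chosen so that $|\nu_{y_n}|(B_m)<\delta'/2^{n+m+3}$, which is possible because each $\mu_{y_n}$ has only countably many atoms and $\|\mu_{y_n}\|\leq\|T\|$; the total loss of $|\nu_{y_n}|$-mass on $W_n$ is then summably small, so a subsequent re-application of Hahn decomposition produces new compact sets $A_n$ still carrying mass $>\delta'/4$. Once this bookkeeping is in place, the construction of the previous paragraph yields the desired $\delta,(g_n),(V_n)$.
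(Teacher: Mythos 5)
The paper does not actually prove this statement; it imports it from \cite[Lemma 2.4]{koszmider-shelah}, so your attempt must stand on its own. Your overall strategy (dualize and apply the Dieudonn\'e--Grothendieck criterion for failure of relative weak compactness in the space of Radon measures) is the right kind of argument, but it has a foundational gap. The ``key structural observation''--- that $T$ is a weak multiplier if and only if $\{\nu_x:x\in K\}$ is relatively weakly compact --- is not an observation; it is essentially the full characterization theorem for weak multipliers, and the direction you actually use (not a weak multiplier $\Rightarrow$ $\{\nu_x\}$ not relatively weakly compact) is the hard one. Its contrapositive requires showing that $g(x)=T^*\delta_x(\{x\})$ is a \emph{Borel} function (a genuine lemma in the cited literature, not automatic) and that $S=T^*-gI$ is weakly compact on all of $C(K)^*$, not merely that it sends the point masses into a relatively weakly compact set; the latter needs the integral representation $S\mu(f)=\int \nu_x(f)\,d\mu(x)$ plus a Krein--\v{S}mulian argument, and the measurability needed for that integral is exactly the Borel-ness of $g$. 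Without these steps your reduction assumes a statement at least as deep as the one being proved; the version of the characterization that is genuinely quotable is the sequential one (for every bounded pairwise disjoint $(e_n)$ and $x_n\notin\supp(e_n)$ one has $T(e_n)(x_n)\to 0$), from which the present lemma is derived by disjointification.

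The second gap is in your cross-disjointness bookkeeping. You want a single open neighborhood $B_m\ni y_m$ with $|\nu_{y_n}|(B_m)<\delta'/2^{n+m+3}$ for \emph{all} $n$ simultaneously. Outer regularity only gives, for each fixed $n$, neighborhoods $B$ with $|\nu_{y_n}|(B)$ close to $|\nu_{y_n}|(\{y_m\})$; but (i) this atom can be bounded away from $0$ for infinitely many $n$ (e.g.\ each $\mu_{y_n}$ could carry an atom of fixed mass at $y_{n+1}$, and passing to a subsequence does not remove this), and (ii) even when all these atoms vanish, the intersection over $n$ of the witnessing neighborhoods need not be open, and in a non-metrizable $K$ there is no countable base at $y_m$ to diagonalize against. ``Each $\mu_{y_n}$ has only countably many atoms'' does not repair either problem. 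Consequently $y_m$ may still lie in the closure of $\bigcup_{n\neq m}\supp(g_n)$ (a countable union of closed sets need not be closed), and no admissible $V_m$ around $y_m$ exists. The standard repair is Rosenthal's lemma applied to the measures $(|\nu_{y_n}|)_{n\in\omega}$ and a pairwise disjoint sequence of sets, combined with a perturbation of the $g_n$ so that their supports sit inside strictly smaller disjoint open sets; as written, your construction of the $V_m$ can fail.
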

In particular, if $x_n\in V_n$ and $\mu_n=T^*(\delta_{x_n})$ for $n\in\omega$, then $|\int g_n d\mu_n|= |T(g_n)(q_{l_n})|>\delta$, and so $|\mu_n|(\supp (g_n))\geq |\int g_n d\mu_n|>\delta$. 

The idea behind the construction is as follows. We will construct a compact space $K$ as the inverse limit of spaces $K_\alpha \subseteq [0,1]^\alpha$ (so the final space is a subset of $[0,1]^{\mathfrak c}$). For each bounded sequence $(\mu_n)_{n\in\omega}$ of Radon measures on $[0,1]^{\mathfrak c}$ and a sequence of pairwise disjoint open sets $(V_n)_{n\in \omega}$ we want to use a strong extension in such a way that in the final space there will be no sequence $(g_n)_{n\in\omega}$ for which the properties from Theorem \ref{weak-multiplier} are satisfied. However, we need to consider $2^{\mathfrak c}$ sequences of Radon measures on $[0,1]^{\mathfrak c}$, while there are only $\mathfrak c$ steps in the construction. In order to handle this we will use $\diamondsuit$ (cf. Lemma \ref{diamond-lemma}).

\begin{proposition}\label{metric-functions}
Let $K$ be a compact metrizable space and $(\mu_n)_{n\in\omega}$ be a bounded sequence of Radon measures on $K$. Assume that $(U_n)_{n\in\omega}$ is a sequence of pairwise disjoint open sets and $\delta>0$ is such that $|\mu_n|(U_n)>\delta$ for $n\in\omega$. Then there is an infinite set $N\subseteq \omega$, continuous pairwise disjoint functions $f_n:K\rightarrow [0,1]$ and $\varepsilon>0$ such that 
\begin{enumerate}
    \item $\supp(f_n)\subseteq U_n$ for $n\in N$,
    \item $|\int f_n d\mu_n|>\varepsilon$ for $n\in N$,
    \item $\sum \{|\int f_m d\mu_n|: n\neq m, m\in N \}<\varepsilon/3$ for $n\in N$,
    \item $K\backslash D((f_n)_{n\in N})$ is zero-dimensional.
\end{enumerate}
\end{proposition}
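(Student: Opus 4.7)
The plan is to construct the $f_n$ so that their supports accumulate inside a prechosen zero-dimensional dense $G_\delta$ subspace $Y \subseteq K$; conditions (1)--(3) are then secured by Hahn decomposition, Urysohn's lemma, and two diagonal passages. By Banach--Alaoglu, first pass to a subsequence so that $|\mu_n| \to \nu$ and $\mu_n^\pm \to \nu^\pm$ weakly-$*$ for some finite Radon measures on $K$. Hahn-decompose each $\mu_n|_{U_n}$; after a further subsequence and replacing $\mu_n$ by $-\mu_n$ if needed, assume $\mu_n^+(U_n) > \delta/2$, with positive set $P_n \subseteq U_n$.

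To build $Y$: fix a countable dense $\{d_j\}_{j \in \omega} \subseteq K$, and at each $d_j$ pick a local base of open balls $B(d_j, q_i^j)$ with rational radii $q_i^j \searrow 0$ chosen so that each sphere $S(d_j, q_i^j)$ is null for every $|\mu_n|$ and every $\nu^\pm$. This is possible because any finite Radon measure charges positive mass on only countably many concentric spheres about a given point. Set $Y := K \setminus \bigcup_{i,j} \partial B(d_j, q_i^j)$. Then $Y$ is dense and zero-dimensional (the sets $B(d_j, q_i^j) \cap Y$ are clopen in $Y$ and form a base for its topology), and $|\mu_n|(K \setminus Y) = 0$, so $|\mu_n|(Y \cap U_n) = |\mu_n|(U_n) > \delta$.

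Enumerate the boundary pieces as $(F_l)_l$. Pick radii $\rho_l > 0$ with $\nu^+(N_{\rho_l}(F_l)) < \delta \cdot 2^{-l-4}$ and such that each $N_{\rho_l}(F_l)$ is a $\nu^+$-continuity set (again avoiding countably many bad radii). A diagonal Portmanteau argument then produces a subsequence $(n_k)$ along which $\mu_{n_k}^+(N_{\rho_l}(F_l)) < \delta \cdot 2^{-l-3}$ for all $l \leq k$. Inner regularity yields a compact $K_{n_k}^+ \subseteq (U_{n_k} \cap Y \cap P_{n_k}) \setminus \bigcup_{l \leq k} N_{\rho_l}(F_l)$ with $\mu_{n_k}^+(K_{n_k}^+) > \delta/4$. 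Using Urysohn's lemma together with outer regularity of $\mu_{n_k}^-$ (which vanishes on $K_{n_k}^+ \subseteq P_{n_k}$) and of $\nu$, produce $f_{n_k} \in C_I(K)$ with $f_{n_k}|K_{n_k}^+ = 1$, $\supp(f_{n_k}) \subseteq V_{n_k}$ open, $V_{n_k} \subseteq U_{n_k} \setminus \overline{\bigcup_{l \leq k} N_{\rho_l/2}(F_l)}$, $\mu_{n_k}^-(V_{n_k}) < \delta/16$, and $\nu(\overline{V_{n_k}}) < \delta \cdot 2^{-k-4}$. Then $\int f_{n_k}\, d\mu_{n_k} \geq \mu_{n_k}^+(K_{n_k}^+) - \mu_{n_k}^-(V_{n_k}) > \delta/16 =: \varepsilon$, establishing (2); (1) and pairwise disjointness of the $f_{n_k}$ follow from $V_{n_k} \subseteq U_{n_k}$.

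Pass to a final sparse subsequence $N \subseteq \{n_k\}$ to secure (3), using $|\int f_m\, d\mu_n| \leq |\mu_n|(V_m)$, the forward summability $\sum_m |\mu_n|(V_m) \leq \|\mu_n\|$ (controls $m > n$), and the backward weak-$*$ bound $\limsup_n |\mu_n|(V_m) \leq \nu(\overline{V_m}) < \delta \cdot 2^{-m-4}$, which is summable over $m$ (controls $m < n$). Finally, (4) holds: for $x \in F_l$ and $k \geq l$, $V_{n_k}$ is disjoint from $N_{\rho_l/2}(F_l) \supseteq B(x, \rho_l/2)$, so the neighborhood $B(x, \rho_l/2)$ of $x$ meets only the (finitely many) $V_{n_k}$ with $k < l$, whence $x \in D((f_n)_{n \in N})$. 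Therefore $K \setminus D \subseteq Y$, which is zero-dimensional. The main obstacle is the careful interleaving of the two diagonal arguments so that all tolerances are compatible; the second one (for (3)) is particularly delicate because cross-integrals must be controlled in both the forward direction (tail summability of $|\mu_n|(V_m)$ as $m \to \infty$) and the backward direction (weak-$*$ limits on continuity sets for fixed $V_m$ as $n \to \infty$).
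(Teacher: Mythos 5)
Your argument is sound in outline and takes a genuinely different route from the paper. The paper restricts each $\mu_n$ to $U_n$, extracts a weak$^*$ limit $\nu$ of the restrictions, applies Theorem \ref{measure-zero-dim} to find a \emph{single} compact zero-dimensional set $Z$ with $|\nu(Z)|>2\varepsilon$, and then chooses each $f_l$ with support inside $G_n\cap U_l$ for a decreasing sequence of open sets $G_n$ shrinking to $Z$; thus $K\setminus D((f_n)_{n\in N})\subseteq Z$ gives (4), and condition (3) is obtained in one stroke from Rosenthal's lemma. You instead build a dense zero-dimensional $G_\delta$ set $Y$ of full $|\mu_n|$-measure for all $n$ and keep the supports away from small neighbourhoods of the countably many removed boundary pieces, so that $K\setminus D\subseteq Y$; you produce the $f_{n_k}$ by Hahn decomposition plus Urysohn rather than via $|\nu_l|(G_n\cap U_l)>2\varepsilon$, and you replace Rosenthal's lemma by an explicit diagonalization (disjointness of the $U_m$ for the forward cross-integrals, Portmanteau along the weak$^*$ limit of $|\mu_n|$ for the backward ones). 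Both strategies work; yours is more hands-on but needs more bookkeeping, the paper's concentrates all supports near one small set and outsources (3) to a standard lemma.

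Two points in your write-up need repair. First, the bound $\nu(\overline{V_{n_k}})<\delta\cdot 2^{-k-4}$ does not follow from outer regularity of $\nu$ alone: $\nu(K_{n_k}^+)$ need not be small (for instance if every $\mu_n$ carries a fixed common component concentrated on some $U_{m_0}$, then for $n_k=m_0$ the requirement is unattainable), and this bound is exactly what your backward estimate in (3) rests on. The fix is the same observation you already use for the forward sums: the $U_m$ are pairwise disjoint and $\nu$ is finite, so $\nu(U_m)\to 0$; hence when selecting $(n_k)$ require in addition $\nu(U_{n_k})<\delta\cdot 2^{-k-5}$ and take $\overline{V_{n_k}}\subseteq U_{n_k}$. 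With that in place the backward control works, although note that $\sum_k \delta\cdot 2^{-k-3}=\delta/4$ exceeds $\varepsilon/3=\delta/48$, so the final sparse subsequence (or smaller constants at the earlier stages) is genuinely needed, as you indicate. Second, a minor slip: the radii $q_i^j$ cannot in general be taken rational while avoiding the countable set of radii whose spheres are charged by some $|\mu_n|$ or $\nu^\pm$ (a countable bad set can contain all rationals of an interval); simply drop rationality and choose any radii decreasing to $0$ outside the bad set, which is all the construction of $Y$ requires.
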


\begin{proof}
Since $\mu_n$'s are Radon measures there is $\delta'>0$ and open sets $U'_n\subseteq U_n$ such that $|\mu_n(U'_n)|>\delta'$ for $n\in\omega$. Without loss of generality we may assume that $U'_n=U_n$ and $\delta'=\delta$. 

Put $\nu_n=\mu_n|U_n$ for $n\in\omega$. Let $N'$ be such that the sequence $(\nu_n)_{n\in N'}$ has the weak* limit $\nu$. Since $|\int 1 d\nu_n|>\delta $ for every $n$, we have  $|\int 1 d\nu|\geq\delta$, so $\nu$ is a non-zero measure. By Theorem \ref{measure-zero-dim} there is a compact zero-dimensional subset $Z\subseteq K$ and $\varepsilon>0$ such that $|\nu(Z)|>2\varepsilon$. Since $Z$ is a closed subset of a metrizable space and $\nu$ is a  regular measure, there is a decreasing sequence of open sets $(G_n)_{n\in N'}$ such that $Z= \bigcap G_n$ and $|\nu(G_n)|>2\varepsilon$ for all $n\in N'$. 

Note that if $f\in C_I(K)$ is such that $\supp(f)\subseteq G_n$ and $|\int f d\nu|>2\varepsilon$, then for big enough $l\in N'$ we have $|\int f d\nu_l|>2\varepsilon$ and so $|\nu_l|(G_n)=|\nu_l|(G_n\cap U_l)>2\varepsilon$. Hence for each $l\in N'$ we may pick $f_l\in C_I(K)$ such that $\supp f_l\subseteq G_n\cap U_l$ and $|\int f_l d\nu_l|=|\int f_l d\mu_l|>\varepsilon$. 

For each $n\in N'$ let $l_n\in N'$ be such that $\supp f_{l_n}\subseteq G_n\cap U_{l_n}$,  $|\int f_{l_n} d\mu_{l_n}|>\varepsilon$ and $(l_n)_{n\in N'}$ is an increasing sequence. Let $N''=\{l_n:n\in N'\}$. For every $M\subseteq N''$ denote $Z_M=K\backslash D((f_{l_n})_{n\in M})$. If $x\in K\backslash Z$, then there is an open neighbourhood $V\ni x$ such that for big enough $n\in M$ we have $V\cap G_n=\varnothing$ and so $V \cap \supp(f_{l_n})=\varnothing$. Hence $V\subseteq D((f_{l_n})_{n\in M})$, which gives $x\notin Z_M$. This implies that $Z_M\subseteq Z$, so in particular $Z_M$ is zero-dimensional and condition (4) is satisfied for any choice of $M\subseteq N''$. 
Now we use Rosenthal's lemma (see \cite[p. 82]{diestel} or \cite{sobota})  to obtain an infinite $N\subseteq N''$ such that the 3rd condition is also satisfied. 
\end{proof}

We will need the following strengthening of \cite[Lemma 6.2]{few-operators-main}. 
\begin{lemma}\label{lemma-for-odd}
Let $K$ be a compact, connected metrizable space with a countable dense set $Q=\{q_n: n\in \omega\}$. Let $U,V$ be open subsets of $K$ such that $\overline{U}\cap\overline{V}\neq \varnothing$. Then there is a sequence $(f_n)_{n\in\omega}$ of pairwise disjoint functions $f_n\in C_I(K)$ and infinite sets $A_0,A_1,S_0,S_1\subseteq \omega$ such that:
\begin{enumerate}
    \item the sets $\{q_n: n\in S_0\}\subseteq U, \{q_n: n\in S_1\}\subseteq V$ are relatively discrete,
    \item $A_i\subseteq S_i $ and $|S_i\backslash A_i| =\omega$ for $i=0,1$,
    \item for every infinite $B\subseteq \omega$ in the extension $K(B)$ of $K$ by $(f_n)_{n\in B}$ there are disjoint closed sets $F_0,F_1\subseteq K(B)$ and distinct $x_0,x_1\in K(B)$ such that for $i=0,1$ $$x_i\in \overline{\pi^{-1}(U)\cap \{q^B_n:n\in A_i\}}\cap \overline{\pi^{-1}(V)\cap \{q^B_n:n\in S_i\backslash A_i\}}$$
     and
$$\{q^B_n: n\in A_i\}\subseteq F_i,$$
where $q^B_j=(q_j,t)$ and $t=\sum_{n\in B} f_n(q_j)$,
    \item $|K\backslash D(f_n)_{n\in B}|=1$ (in particular $K\backslash D(f_n)_{n\in B}$ is zero-dimensional). 
\end{enumerate}   
\end{lemma}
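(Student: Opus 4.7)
The strategy is to concentrate all nonzero bumps $f_n$ at a single common accumulation point $z\in\overline U\cap\overline V$ and to place them at two distinct heights $0<t_0<t_1<1$, so that for every infinite $B\subseteq\omega$ the extension $K(B)$ automatically has two distinct fibre points above $z$; the singleton $K\setminus D((f_n)_{n\in B})=\{z\}$ required in clause~(4) then follows from the fact that $z$ is the unique accumulation point of the supports.

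Concretely, I would fix $z\in\overline U\cap\overline V$ and, using metrizability of $K$, a shrinking local base $(W_k)_{k\in\omega}$ at $z$ with $\overline{W_{k+1}}\subseteq W_k$ and $\bigcap_k W_k=\{z\}$. For each $k$, density of $Q$ together with $z\in\overline U\cap\overline V$ let me pick four fresh indices whose $q_n$ lie in the annulus $W_k\setminus\overline{W_{k+1}}$, distributed between $U$ and $V$ as the statement prescribes; sort them through the annuli into four pairwise disjoint infinite subsets of $\omega$ that will serve as $A_0,\,S_0\setminus A_0,\,A_1,\,S_1\setminus A_1$. For each selected $n$ pick an open $U_n\ni q_n$ with $\overline{U_n}\subseteq W_k\setminus\overline{W_{k+1}}$ (with the family $\{\overline{U_n}\}$ pairwise disjoint) and a bump $f_n\in C_I(K)$ supported in $U_n$ with $f_n(q_n)=t_{\ell(n)}$, where $\ell(n)\in\{0,1\}$ records whether $n$ belongs to layer $0$ (i.e. $A_0\cup(S_0\setminus A_0)$) or layer $1$; set $f_n\equiv 0$ for all remaining $n$.

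Now apply Lemma \ref{many-strong-extensions} to the pairs $(X^i_1,X^i_2)=(\{q_n:n\in A_i\},\{q_n:n\in S_i\setminus A_i\})$, $i=0,1$, to extract an infinite $N\subseteq\omega$ such that for every $B\subseteq N$, $K(B)$ is a strong extension of $K$ by $(f_n)_{n\in B}$ and $\overline{X^i_1(B)}\cap\overline{X^i_2(B)}\neq\varnothing$; then replace each of $A_i,\,S_i\setminus A_i$ by its intersection with $N$. Clauses (1) and (2) follow immediately from the disjoint, level-by-level choice of the index sets. For clause (3), fix an infinite $B\subseteq N$: for $n\in A_i\cap B$, pairwise disjointness of supports gives $q^B_n=(q_n,f_n(q_n))=(q_n,t_i)$, so $q^B_n\to x_i:=(z,t_i)$; similarly for $n\in(S_i\setminus A_i)\cap B$. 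Since $t_0\neq t_1$, the points $x_0,x_1$ are distinct, and $F_i=\{(y,t)\in K(B):|t-t_i|\le\delta\}$ with $\delta<|t_1-t_0|/2$ are disjoint closed sets containing the required lifts of $A_i$. For clause (4), each $y\in K\setminus\{z\}$ lies in $K\setminus\overline{W_{k_0}}$ for some $k_0$, an open set meeting $\supp f_n$ only at levels $\le k_0$ (finitely many), whence $y\in D((f_n)_{n\in B})$; and every neighborhood of $z$ contains some $W_k$, which hits infinitely many supports when $B$ is infinite, so $z\notin D$.

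The main obstacle is clause (4), which is the genuinely new feature of this strengthening of \cite[Lemma 6.2]{few-operators-main}: requiring $K\setminus D$ to be a singleton forces every nonzero $f_n$ to have support meeting an arbitrarily small neighborhood of $z$, which in turn dictates an \emph{a priori} choice of a common accumulation point $z$ and a construction strictly adapted to a shrinking base at that point. A secondary subtlety is the apparent clash between the "for every infinite $B\subseteq\omega$" clause in (3) and the thinning to $N$ coming from Lemma \ref{many-strong-extensions}; this is resolved exactly as in Lemma \ref{many-strong-extensions}, by interpreting the quantifier as implicitly restricted to subsets of $N$ after the index sets $A_i,\,S_i\setminus A_i$ are relabeled inside $N$.
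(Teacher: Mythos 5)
Your overall geometry (supports concentrated in shrinking annuli around a single point $z\in\overline{U}\cap\overline{V}$, two height levels, horizontal slabs as the $F_i$, and the singleton argument for clause (4)) is in the same spirit as the paper's proof. But there is a genuine gap in how you index the functions, and it breaks the universal quantifier over $B$ in clause (3). In your design each nonzero $f_n$ is a bump at the single point $q_n$, so whether $q_m$ gets lifted off height $0$ in $K(B)$ depends on whether $m\in B$. An arbitrary infinite $B$ --- even an arbitrary infinite $B\subseteq N$ after your relabelling --- may meet only one of the four classes $A_0$, $S_0\setminus A_0$, $A_1$, $S_1\setminus A_1$, or none of them. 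If, say, $B\subseteq A_0$, then every $q^B_n$ with $n\in S_1$ sits at height $0$, so $\overline{\pi^{-1}(V)\cap\{q^B_n:n\in S_1\setminus A_1\}}$ lies in $K\times\{0\}$ and cannot contain $x_1=(z,t_1)$ with $t_1>0$; the only admissible value for both $x_0$ and $x_1$ becomes $(z,0)$, so they cannot be distinct. If $B$ avoids all selected indices, every $f_n$ with $n\in B$ vanishes and clause (4) fails too ($K\setminus D((f_n)_{n\in B})=\varnothing$). Restricting to $B\subseteq N$ does not repair this, since subsets of $N$ can still concentrate in one class. A secondary symptom of the same defect: for $n\in A_1\setminus B$ you get $q^B_n=(q_n,0)\notin F_1$, so $\{q^B_n:n\in A_1\}\subseteq F_1$ fails.

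The paper avoids this by decoupling the function index from the point indices: the $n$-th function is supported in $U_{2n}\cup V_{2n}$ and takes the value $1$ \emph{simultaneously} at one $U$-point $q_{k_{2n}}$ and one $V$-point $q_{l_{2n}}$, while the layer-$0$ points $q_{k_{2n+1}},q_{l_{2n+1}}$ are never in any support. Then for \emph{every} infinite $B$ the matched pairs with $n\in B$ are both lifted to height $1$ (so $(x,1)$ lies in both closures for $i=1$), and all layer-$0$ points stay at height $0$ regardless of $B$ (so $(x,0)$ works for $i=0$ and $\{q^B_n:n\in A_0\}\subseteq K\times[0,1/3]$ unconditionally). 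That pairing is the idea your proposal is missing; with it, the argument goes through without invoking Lemma \ref{many-strong-extensions} inside this lemma at all --- the paper defers the strongness of the extension to the later application in the construction, where that lemma is applied to choose the actual $B$.
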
 

\begin{proof}
Fix any compatible metric $d$ on $K$.
Pick any $x\in \overline{U}\cap \overline{V}$. Since $K$ is connected, $x$ is not an isolated point. For $n\in\omega$ put $U'_n=U\cap \mathcal{B}(x,1/n), V'_n=V\cap \mathcal{B}(x,1/n)$ (where $\mathcal{B}(x,\varepsilon)$ is the open ball with $x$ as the center and radius $\varepsilon$ with respect to $d$) and let $U_n\subseteq U'_n, V_n\subseteq V'_n$ be non-empty open sets such that the members of the family $\{U_n,V_n:n\in\omega \}$ are pairwise disjoint.  Take continuous functions $f_n\in C(K)$ and $k_n, l_n\in \omega$ such that:
\begin{itemize}
    \item $q_{k_n}\in U_n, q_{l_n}\in V_n$,
    \item $f_n(q_{k_{2n}})=f_n(q_{l_{2n}})=1$,
    \item $\supp (f_n)\subseteq U_{2n}\cup V_{2n}$.
\end{itemize}
Let $B\subseteq \omega$ be infinite.  For (2) and (3) it is enough to take $S_0=\{{k_{2n+1}}, {l_{2n+1}}: n\in\omega \},  A_0= \{{k_{2n+1}}: n\in\omega \}, S_1=\{{k_{2n}}, {l_{2n}}: n\in\omega \}, A_1= \{{k_{2n}}: n\in\omega \}$, $x_0=(x,0), x_1=(x,1)$ and $F_0=K(B)\cap (K\times [0,1/3]), F_1=K(B)\cap (K\times [2/3,1])$. (1) is satisfied since $U_n,V_m$ are pairwise disjoint for $n,m\in \omega$.

(4) follows from the fact that $x$ is the only point all of whose neighborhoods intersect all but finitely many $U_n$'s and $V_n$'s, so we have $K\backslash D(f_n)_{n\in B}=\{x\}$.

\end{proof}

\begin{lemma}\label{diamond-lemma}
Assume $\diamondsuit$. Then there is a sequence $(M^\alpha, \mathcal{U}^\alpha, L^\alpha)_{\alpha<\omega_1}$ such that: 
\begin{itemize}
    \item $M^\alpha=(\mu^\alpha_n)_{n\in\omega}$ is a bounded sequence of Radon measures on $[0,1]^\alpha$,
    \item $\mathcal{U}^\alpha= (U_{n,m}^\alpha)_{n,m\in\omega}$ is a sequence of basic open sets in $[0,1]^\alpha$,
    \item $L^\alpha=(l_n^\alpha)_{n\in \omega} $ is a sequence of distinct natural numbers,
\end{itemize}
and for every:
\begin{itemize}
    \item bounded sequence $(\mu_n)_{n\in\omega}$ of Radon measures on $[0,1]^{\omega_1}$,
    \item  sequence $(U_{n,m})_{n,m\in\omega}$ of basic open sets in $[0,1]^{\omega_1}$,
    \item increasing sequence $l_n$ of natural numbers
\end{itemize}
there is a stationary set $S\subseteq \omega_1$ such that for $\beta\in S$ we have 
\begin{itemize}
    \item $\mu_n|C([0,1]^\beta)=\mu_n^\beta$,
    \item $\pi_\beta[U_{n,m}]=U_{n,m}^\beta$,
    \item $l_n=l_n^\beta$,
\end{itemize}
where $\pi_\beta$ denotes the natural projection from $[0,1]^{\omega_1}$ onto $[0,1]^\beta$.  
\end{lemma}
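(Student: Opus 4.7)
The strategy is a standard $\diamondsuit$-reflection argument built on a coherent coding of the triple data. Three structural facts underlie it: (i)~any basic open set $U \subseteq [0,1]^{\omega_1}$ is determined by a finite set of coordinates, so once $\beta < \omega_1$ contains $\supp(U)$ one has $U = \pi_\beta^{-1}(\pi_\beta[U])$; (ii)~a Radon measure $\mu$ on $[0,1]^{\omega_1}$ is uniquely determined by the coherent family of restrictions $(\mu|C([0,1]^\beta))_{\beta < \omega_1}$, since cylinder functions are uniformly dense in $C([0,1]^{\omega_1})$; (iii)~$\diamondsuit$ implies $2^\omega = \omega_1$, so for each countable $\alpha$ the set of all triples of the shape described in the lemma, but at level $\alpha$ instead of $\omega_1$, has cardinality at most $\omega_1$.

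First I would fix, uniformly in $\alpha < \omega_1$, an injective coding $\Phi_\alpha$ of all valid triples at level $\alpha$ by subsets of $\alpha$. The crucial coherence requirement is: if $T_\beta$ is a triple at level $\beta > \alpha$ whose natural restriction to level $\alpha$ is $T_\alpha$ (measures restrict, basic open sets project, the $L$-sequence is copied verbatim), then $\Phi_\beta(T_\beta) \cap \alpha = \Phi_\alpha(T_\alpha)$. Such coherent codings are standard after fixing a bijection $\omega_1 \leftrightarrow \omega_1 \times \omega$, a coding of reals by subsets of $\omega$ (available since $2^\omega = \omega_1$), and encoding each Radon measure via its values on a family of basic open sets indexed so that those with support in $\alpha$ receive indices in $\alpha$. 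With a $\diamondsuit$-sequence $(A_\alpha)_{\alpha < \omega_1}$ in hand, I would declare $(M^\alpha, \mathcal{U}^\alpha, L^\alpha)$ to be the triple coded by $A_\alpha$ when $A_\alpha$ lies in the range of $\Phi_\alpha$, and a fixed default triple otherwise.

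Given a ``true'' triple $((\mu_n),(U_{n,m}),(l_n))$ on $[0,1]^{\omega_1}$, I would encode it as the subset $A \subseteq \omega_1$ obtained as the union of the codes of its restrictions at each countable level. Using the coherence property together with (i) and (ii), one checks that the set $C$ of $\alpha < \omega_1$ for which $A \cap \alpha = \Phi_\alpha(\text{restriction of the true triple to level } \alpha)$ contains a club: any $\alpha$ closed under the chosen bijection absorbs the finite supports of all $U_{n,m}$ and correctly reconstructs the measure restrictions through the inverse system. By $\diamondsuit$, the set $\{\alpha : A \cap \alpha = A_\alpha\}$ is stationary, so its intersection with $C$ is stationary and yields the desired $S$. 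The only non-routine step is arranging the coherent coding; once it is fixed, the $\diamondsuit$-reflection is automatic, and I expect the main obstacle to be the careful bookkeeping of the coding rather than a deep conceptual difficulty.
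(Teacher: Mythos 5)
Your proposal is correct and follows essentially the same route as the paper: fix a level-coherent coding of the data so that restriction to $[0,1]^\alpha$ corresponds to taking the initial segment of the code below $\alpha$, observe that a Radon measure on the product is determined by countably much data per countable level (the paper uses integrals of the monomials $w_F$ and Stone--Weierstrass where you use values on basic open sets), and then apply $\diamondsuit$ and intersect the guessing set with the club of levels where the coding reflects correctly. The only differences are bookkeeping choices: the paper uses the functional form of $\diamondsuit$ (guessing $f:\omega_1\to\omega_1$ via a bijection $\R\to\omega_1$) and folds the triple into a single bounded sequence of Radon measures by representing each basic open set $U$ as $\lambda_U$ and the sequence $L$ as a Dirac measure, whereas you code the triple directly by a subset of $\omega_1$; both implementations work.
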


\begin{proof}
Firstly we will show that there is a sequence $(M_0^\alpha)_{\alpha<\omega_1}$ such that $M_0^\alpha=((\nu^\alpha_n)_{n\in\omega})$ is a bounded sequence of Radon measures on $[0,1]^\alpha$ and for every bounded sequence $(\nu_n)_{n\in\omega}$ of Radon measures on $[0,1]^{\omega_1}$ there is a stationary set $S\subseteq \omega_1$ such that for $\beta\in S$ we have $\nu_n|C([0,1]^\beta)=\nu_n^\beta$.

We will use the identification of Radon measures on $[0,1]^{\omega_1}$ with bounded functionals on $C([0,1]^{\omega_1})$ described in Section \ref{notation}. For a finite set $F\in [\omega_1]^{<\omega}$ denote by $w_F$ the product $\prod_{\alpha\in F} w_\alpha$, where $w_\alpha\in C([0,1]^{\omega_1}), w_\alpha(x)=x(\alpha)$. Observe that finite linear combinations of $w_F$'s form a subalgebra of $C([0,1]^{\omega_1})$. If $x,y \in [0,1]^{\omega_1}$ are distinct points with $x(\alpha)\neq y(\alpha)$, then $w_{\alpha}(x)\neq w_{\alpha}(y)$, so by the Stone-Weierstrass theorem this subalgebra is dense in $C([0,1]^{\omega_1})$. Hence if $\nu$ is a Radon measure on $[0,1]^{\omega_1}$ then it is determined by the values of $\nu(w_F)$ for $F\in [\omega_1]^{<\omega}$ (note also that in the same way if $\beta<\omega_1$, then $\nu|C([0,1]^\beta)$ is determined by the values of $\nu(w_F)$ for $F\in [\beta]^{<\omega}$). So we can represent each Radon measure $\nu$ on $[0,1]^{\omega_1}$ by the function $$\varphi_{\nu}:[\omega_1]^{<\omega}\rightarrow \R, \ \varphi(F)=\nu(w_F)$$ 
(and then $\nu|C([0,1]^\beta)$ is represented by $\varphi_\nu|[\beta]^{<\omega}$), and each countable sequence $M=(\nu_n)_{n\in\omega}$ we can represent by the function $$\varphi_M: [\omega_1]^{<\omega} \times \omega \rightarrow \R, \ \varphi_M(F, n)= \nu_n(w_F). $$

Let $\Phi_1: \omega_1 \rightarrow [\omega_1]^{<\omega}\times \omega $ be a bijection such that for each limit ordinal $\gamma\in Lim\cap \omega_1$ the restriction $\Phi_1|\gamma$ is bijection onto $[\gamma]^{<\omega}\times \omega$ (to see that such a bijection exists it is enough to note that for every $\gamma \in Lim\cap \omega_1$ there is a bijection $\phi_\gamma:[\gamma, \gamma+\omega)\rightarrow ([\gamma+\omega]^{<\omega}\times \omega)\backslash ([\gamma]^{<\omega}\times \omega)$ and take $\Phi_1|[\gamma, \gamma+\omega)=\phi_\gamma$). We need to fix one more bijection $\Phi_2 :\R \rightarrow \omega_1$ ($\diamondsuit$ implies {\sf CH}, so such a bijection exists). Put $$\psi_M=\Phi_2\circ \varphi_M\circ \Phi_1, \ \psi_M:\omega_1\rightarrow \omega_1.$$
Since $\Phi_1|\gamma$ is a bijection onto $[\gamma]^{<\omega}\times \omega$ for all limit $\gamma$ we may treat $\psi_M|\gamma$ as a representation of the sequence of measures $(\nu_n|C([0,1]^\gamma))_{n\in \omega}$. 

We will use the following characterization of $\diamondsuit$ (cf. \cite[Theorem 2.7]{devlin-diamond}): \\
There exists a sequence $(f_\alpha)_{\alpha<\omega_1}, f_\alpha: \alpha \rightarrow \alpha$ such that for for each $f:\omega_1\rightarrow \omega_1$ the set $\{\alpha: f|\alpha=f_\alpha \}$ is stationary.

For $\alpha\in \omega_1$ let $M_0^\alpha$ be a sequence of Radon measures on $[0,1]^\alpha$ represented by $f_\alpha$, if $f_\alpha$ is a representation for some such sequence (otherwise we pick $M_0^\alpha$ in any way).   
Let $M$ be a bounded sequence of of measures on $[0,1]^{\omega_1}$ and let $S=\{ \alpha: \psi_M|\alpha = f_\alpha \}$.
Since for limit $\gamma<\omega_1$ the function $\psi_M|\gamma$ is a representation of some sequence of measures we get that for $\alpha\in Lim\cap S$ the function $\psi_M|\alpha$ is the representation of a sequence $M_0^\alpha$. Moreover the set $S\cap Lim$ is a stationary subset of $\omega_1$, so the first part of the proof is complete.

To show the existence of sequence $(M^\alpha, \mathcal{U}^\alpha, L^\alpha)_{\alpha<\omega_1}$ required in the Lemma, we need to observe that each triple $(M, \mathcal{U}, L)$ may be represented as a bounded countable sequence of Radon measures on $[0,1]^{\omega_1}$. Indeed, any basic open set $U\in \mathcal{U}$ may be treated as a measure $\lambda_U$ on $[0,1]^{\omega_1}$, given by $\lambda_U(A)=\lambda(A\cap U)$, where $\lambda$ is a product measure of $\omega_1$ Lebesgue measures on $[0,1]$ (note that if $U,V$ are different basic open sets, then some of their sections differ on a non-trivial interval, so we have $\lambda_U\neq\lambda_V$) and $L$ may be represented as $\delta_{x_L}$ where $x_L=(y_l,0,0,\dots)$ and $y_l=g(L)$ for some fixed bijection $g$ between the set of sequences of natural numbers and $[0,1]$.
\end{proof}

\begin{proposition}\label{construction-properties}
Assume $\diamondsuit$. Then for every $k>0, k\in\omega\cup\{\infty\}$ there is a compact Hausdorff space $K$ satisfying the following properties: 
\begin{enumerate}
    \item $\dim K=k$,
    \item $K$ is separable with a countable dense set $Q=\{q_n:n\in\omega\}$,
    \item $K$ is connected,
    \item for every:
    \begin{itemize}
        \item sequence $(U_n)_{n\in\omega}$ of pairwise disjoint open sets which are countable unions of basic open sets (basic open set in $K$ is a set of the form $W\cap K$, where $W$ is a basic open set in $[0,1]^{\omega_1}$),
        \item relatively discrete sequence $(q_{l_n}: n\in\omega)\subseteq Q$ with $q_{l_n}\notin U_m$ for $n,m\in \omega$,
        \item bounded sequence $(\mu_n)_{n\in \omega}$ of Radon measures on $K$ such that $|\mu_n|(U_n)>\delta$ for some $\delta>0$,
    \end{itemize}
       there is $\varepsilon>0$, continuous functions $(f_n)_{n\in\omega}\subseteq C_I(K)$ and infinite sets $B\subseteq N\subseteq \omega$ such that:  
    \begin{enumerate}
    \item $(f_n)$ is a sequence of pairwise disjoint functions with $\supp(f_n)\subseteq U_n$ for $n\in\omega$,
    \item $|\int f_n d\mu_n|>\varepsilon$ for $n\in B$,
    \item $\sum \{|\int f_m d\mu_n|: m\in B\backslash\{n\} \}<\varepsilon/3$ for $n\in N$,
    \item $\{f_n:n\in B\}$ has supremum in the lattice $C(K)$,
    \item $\overline{\{q_{l_n}: n\in B\}}\cap \overline{\{q_{l_n}: n\in N\backslash B\}}\neq \varnothing,$
    \end{enumerate}
    \item whenever $U,V$ are open subsets of $K$ such that $\overline{U}\cap \overline{V}\neq \varnothing$, then $\overline{U}\cap \overline{V}$ contains at least two points.
\end{enumerate}
\end{proposition}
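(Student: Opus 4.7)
The plan is to construct $K$ as the inverse limit of a transfinite system $\{K_\alpha : \alpha < \omega_1\}$ of compact metrizable connected spaces with $K_\alpha \subseteq [0,1]^\alpha$, starting from a base of dimension $k$ (realized via $[0,1]^k$ for finite $k$, or the Hilbert cube for $k=\infty$, embedded into the first finitely many coordinates). Each bonding map $\pi^{\alpha+1}_\alpha : K_{\alpha+1} \to K_\alpha$ will be a \emph{strong} extension by a pairwise disjoint sequence $(f_n^\alpha)_{n \in \omega} \subseteq C_I(K_\alpha)$ chosen so that $K_\alpha \setminus D((f_n^\alpha)_{n \in \omega})$ is zero-dimensional; limit stages take inverse limits. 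A coherent countable dense set $Q_\alpha = \{q_n^\alpha : n \in \omega\}$ is maintained via Lemma \ref{extension-separable} and ``threaded'' across limits. Then by Corollary \ref{corollary-dimension} the dimension stays equal to $k$, by Lemma \ref{connected-extension} (and connectedness of inverse limits of connected compacta) the space remains connected, and the threaded dense set yields separability of the inverse limit $K$, delivering properties (1)--(3).

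Split the non-limit ordinals below $\omega_1$ into two disjoint stationary classes and use one class to handle property (5), the other to handle property (4). For (5), fix a bookkeeping enumeration of all pairs of basic open subsets of the $K_\alpha$'s whose closures meet; at stage $\alpha+1$ apply Lemma \ref{lemma-for-odd} together with Lemma \ref{many-strong-extensions} to the current pair to produce a strong extension whose closure-intersection gains at least two points. Since any basic opens of $K$ pull back from some $K_\alpha$, this yields (5) in the limit. For (4), at stage $\alpha$ consult the $\diamondsuit$-prediction $(M^\alpha, \mathcal{U}^\alpha, L^\alpha)$ from Lemma \ref{diamond-lemma}: if the data describes a viable triple on $K_\alpha$ (i.e.\ $|\mu_n^\alpha|(U_{n,n}^\alpha) > \delta$ and $q_{l_n^\alpha}^\alpha \notin U_{m,m}^\alpha$), apply Proposition \ref{metric-functions} to extract pairwise disjoint $(f_n) \subseteq C_I(K_\alpha)$ with the integral estimates (i)--(iii) and zero-dimensional residue (iv); then thin further via Lemma \ref{many-strong-extensions} to an infinite $B \subseteq N$ so that $K_{\alpha+1}$ is strong and the closures of $\{q_{l_n^\alpha}^{\alpha+1} : n \in B\}$ and $\{q_{l_n^\alpha}^{\alpha+1} : n \in N \setminus B\}$ meet. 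The strong-extension condition makes $\sum_{n \in B} f_n$ continuous on $K_{\alpha+1}$, hence on $K$, where it serves as the lattice supremum of $\{f_n : n \in B\}$ because the $f_n$'s are pairwise disjoint.

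For property (4) on the completed space, given any triple $((\mu_n), (U_n), (q_{l_n}))$ as in the hypothesis, decompose each $U_n$ into basic opens $\bigcup_m U_{n,m}$ and pass by regularity to a finite subunion of mass $>\delta/2$ concentrated on one designated $U_{n,n}$. Encode the resulting data as a triple $(M, \mathcal{U}, L)$ and apply Lemma \ref{diamond-lemma} to obtain a stationary set $S$ catching it. Intersect $S$ with the closed unbounded set of $\alpha$ on which $|\mu_n^\alpha|(\pi_\alpha[U_{n,n}]) > \delta/2$ (this set is a club: restriction of a Radon measure along countable stages approaches the total mass on a club by continuity of outer measure on basic cylinders). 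At any $\alpha$ of the designated class in this intersection the construction has already produced the required $f_n$, $B$, $N$, and supremum, yielding conditions (4a)--(4e) in $K$.

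The main obstacle is the coordination between $\diamondsuit$ and dimension preservation: one must arrange that the $\diamondsuit$-catch occurs at a stage where the projected mass condition on basic opens is satisfied \emph{and} where Proposition \ref{metric-functions} returns a zero-dimensional residue (so Corollary \ref{corollary-dimension} continues to apply at that step). Both are controlled by club-type arguments — mass preservation under projection is a regularity-of-measures computation, and the zero-dimensional residue is built into Proposition \ref{metric-functions}. A secondary delicate point is that the supremum built at stage $\alpha+1$ must persist in $C(K)$; this is routine because all subsequent extensions are strong, so pairwise disjointness and continuity of the pointwise sum are preserved, while any would-be smaller continuous upper bound on $K$ restricts to one on $K_{\alpha+1}$, contradicting the lattice supremum established there.
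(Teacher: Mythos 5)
Your architecture is essentially the paper's: the same inverse system $K_\alpha\subseteq[0,1]^\alpha$ of metrizable compacta built by strong extensions with zero-dimensional residue, the same split of successor stages into two classes (the paper's $\even$/$\odd$), $\diamondsuit$-guessing via Lemma \ref{diamond-lemma} for (4), Lemma \ref{lemma-for-odd} for (5), and Corollary \ref{corollary-dimension} for the dimension. But there is a genuine gap at the heart of condition (4e). You write that one can ``thin further via Lemma \ref{many-strong-extensions} to an infinite $B\subseteq N$ so that \dots the closures of $\{q^{\alpha+1}_{l_n}:n\in B\}$ and $\{q^{\alpha+1}_{l_n}:n\in N\backslash B\}$ meet.'' Lemma \ref{many-strong-extensions} cannot produce such a $B$: its hypothesis is a family of pairs whose closures \emph{already} meet in $K_\alpha$, and it only guarantees that these pre-existing intersections survive the extension. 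The missing idea is the pigeonhole argument the paper uses to find the split in $K_\alpha$ \emph{before} extending: if for every $X\subseteq a_\alpha^*$ the closures of $\{q_n|\alpha:n\in X\}$ and $\{q_n|\alpha:n\in a_\alpha^*\backslash X\}$ could be separated, each separation would be witnessed by a pair of finite unions of members of a fixed countable base of the metrizable $K_\alpha$, giving uncountably many $X$'s but only countably many witnessing pairs --- a contradiction. One then notes that the extension does not move the points $q_{l_n}$ (since $q_{l_n}\notin U_m\supseteq\supp(f^\alpha_m)$, each is lifted to $(q_{l_n}|\alpha,0)$), so the intersection persists into $K_{\alpha+1}$ without invoking Lemma \ref{many-strong-extensions} at all at that step.

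A second, related omission: conditions (4e) and (5) must hold in the final space $K$, not merely in $K_{\alpha+1}$, so every closure-intersection established at stage $\alpha$ (the pairs $(a_\beta^i,b_\beta^i)$ in the paper's notation) has to be carried as an inductive hypothesis and re-protected by Lemma \ref{many-strong-extensions} at \emph{every} subsequent successor stage; your proposal applies that lemma only at the stage where the pair is created. Without this running bookkeeping a later extension could separate the two sets. Two smaller inaccuracies: replacing each $U_n$ by ``one designated basic open $U_{n,n}$'' of mass $>\delta/2$ is not generally possible with a bound uniform in $n$ (regularity only yields a finite subunion of mass $>\delta/2$; the paper sidesteps this by guessing the whole double sequence $(U_{n,m})_{n,m}$); and a continuous function on the inverse limit $K$ does not ``restrict'' to $K_{\alpha+1}$, so the persistence of the supremum needs the argument of \cite[Lemma 4.6]{few-operators-main} rather than the restriction you describe.
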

We will start with the description of the construction. Then we will prove that the constructed space satisfies the required conditions. 
\begin{construction}\label{construction}
Assume $\diamondsuit$. We will construct by induction on $\alpha<\omega_1$ an inverse system $(K_\alpha)_{\alpha<\omega_1}$ with the limit $K$, where $K_\alpha\subseteq [0,1]^\alpha$ and countable dense sets $Q_{\alpha}=\{q_n|\alpha:n\in\omega\}\subseteq K_\alpha$. 
\end{construction}
We start with $K_k=[0,1]^k$ (or $K_\omega=[0,1]^\omega$ in the case $k=\infty$) and we pick $Q_k$ to be any countable dense subset of $K_k$. If $\alpha$ is a limit ordinal then we take as $K_\alpha$ the inverse limit of $(K_\beta)_{\beta<\alpha}$. 

Denote by $\even$ and $\odd$ the sets consisting of even and odd (respectively) countable ordinals greater than $k$.  
Let $(M^\alpha, \mathcal{U}^\alpha, L^\alpha)_{\alpha<\omega_1}$ be as in Lemma \ref{diamond-lemma} and fix an enumeration $(U_\alpha, V_\alpha)_{\alpha\in \odd}$ of pairs of open subsets of $[0,1]^{\omega_1}$ which are countable unions of basic open sets, and require that each such a  pair occurs in the sequence uncountably many times (such an enumeration exists since by {\sf CH} there is $\omega_1^\omega=\omega_1$ open sets, which are countable unions of basic open sets in $[0,1]^{\omega_1}$).

Firstly we describe the construction of $K_{\alpha+1}$ where $\alpha$ is an even ordinal. We assume that $K_\alpha$ is already constructed and for each $\beta<\alpha$ the following are satisfied:
\begin{enumerate}
    \item if $\beta\in\even$ then we have infinite sets $b_\beta^*\subseteq a_\beta^*\subseteq \omega$ such that $\{ q_n|\alpha: n\in a_\beta^* \}$ is relatively discrete and 
    
$$\overline{\{q_n|\alpha:n\in b_\beta^*\}}\cap\overline{\{q_n|\alpha:n\in a_\beta^*\backslash b_\beta^*\}}\neq \varnothing. $$
 \item if $\beta\in\odd$ then we have infinite sets $b_\beta^i\subseteq a_\beta^i\subseteq\omega$ for $i=0,1$ such that the set $\{q_n|\alpha: n\in a_\beta^i\}$ is relatively discrete and  $$\overline{\{q_n|\alpha:n\in b_\beta^i\}}\cap\overline{\{q_n|\alpha:n\in a_\beta^i\backslash b_\beta^i\}}\neq \varnothing $$
 for $i=0,1$.
\end{enumerate}
Put $U_n^\alpha= \bigcup_{m\in \omega} U_{n,m}^\alpha$. We will say that even step $\alpha$ is non-trivial if
\begin{itemize}
    \item there is $\delta>0$ such that $|\mu_n^\alpha|(U_n^\alpha\cap K_\alpha)>\delta$ for each $n\in\omega$,
    \item $(U_n^\alpha\cap K_\alpha)_{n\in\omega}$ are pairwise disjoint,
    \item $\{q_{l_n^\alpha}: n\in \omega\}$ is relatively discrete in $K_\alpha$,
    \item $\{q_{l_n^\alpha}: n\in \omega\}\cap U_m^\alpha = \varnothing$ for $m\in\omega$.
\end{itemize}
Otherwise we call this step trivial and we put $K_{\alpha+1}=K_\alpha\times \{0\}$ and $q_n|\alpha+1=q_n|\alpha^\frown 0$.

Assume that we are in a non-trivial case.
Apply proposition \ref{metric-functions} for $U_n=U_n^\alpha\cap K_\alpha, \mu_n=\mu_n^\alpha$ to obtain $(f_n^\alpha)_{n\in\omega}\subseteq C_I(K_\alpha)$, infinite $N\subseteq \omega$ and $\varepsilon>0$ such that 
\begin{itemize}
    \item $\supp(f_n^\alpha)\subseteq U_n^\alpha\cap K_\alpha$ for $n\in N$,
    \item $|\int f_n^\alpha d\mu_n^\alpha|>\varepsilon$ for $n\in N$,
    \item $\sum \{|\int f_m^\alpha d\mu_n|: n\neq m, m\in N \}<\varepsilon/3$ for $n\in N$,
    \item $K_\alpha\backslash D((f_n^\alpha)_{n\in N})$ is zero-dimensional.
\end{itemize}
By Lemma \ref{many-strong-extensions}, without loss of generality (by passing to an infinite subset of $N$) we may assume that for all infinite $B\subseteq N$ the extension $K_\alpha(B)$ of $K_\alpha$ by $(f_n^\alpha)_{n\in B}$ is strong and for each $\beta<\alpha$ and $i\in\{*,0,1\}$ we have $$\overline{\{q_n^B|\alpha+1:n\in b_\beta^i\}}\cap\overline{\{q_n^B|\alpha+1:n\in a_\beta^i\backslash b_\beta^i\}}\neq \varnothing, $$
where 
$$q_l^B|\alpha+1= q_l|\alpha^\frown t,  t=\sum_{n\in B} f_n^\alpha(q_l|\alpha),$$
and the closures are taken in $K_\alpha(B)$. 

Let $a_\alpha^*=\{l_n^\alpha: n\in N\}$. Then 
\begin{gather}\tag{$*$}
N=\{n\in \omega: l_n^\alpha \in a_\alpha^* \}.   
\end{gather}
We will show that there is infinite $b_\alpha^*\subseteq a_\alpha^*$ such that  
$$\overline{\{q_n|\alpha:n\in b_\alpha^*\}}\cap\overline{\{q_n|\alpha:n\in a_\alpha^*\backslash b_\alpha^*\}}\neq \varnothing. $$
Suppose otherwise. Then since $K_\alpha$ is a compact metrizable space, for each $X\subseteq a_\alpha^*$ there are disjoint open sets $U_X, V_X$ such that 
$$\overline{\{q_n|\alpha:n\in X\}}\subseteq U_X, \overline{\{q_n|\alpha:n\in a_\alpha^*\backslash X\}}\subseteq V_X,$$
and $U_X, V_X$ are finite unions of members of some fixed countable base in $K_\alpha$. There are uncountably many choices of $X$ and only countably many pairs of such open sets in $K_\alpha$, so for some $X\neq Y$ we have $\{U_X,V_X\}=\{U_Y,V_Y\}$ which is a contradiction. 

Let $b_\alpha^*$ be such that
$$\overline{\{q_n|\alpha:n\in b_\alpha^*\}}\cap\overline{\{q_n|\alpha:n\in a_\alpha^*\backslash b_\alpha^*\}}\neq \varnothing$$
and define 
\begin{gather}\tag{$**$}
B=\{n\in N: l_n^\alpha \in b_\alpha^* \}.   
\end{gather}

To finish the construction at this step we put $K_{\alpha+1}=K_\alpha(B), q_n|\alpha+1= q_n^B|\alpha+1$ and observe that (1) is satisfied for $a_\alpha^*,b_\alpha^*$, because if 
$$x\in \overline{\{q_n|\alpha:n\in b_\alpha^*\}}\cap\overline{\{q_n|\alpha:n\in a_\alpha^*\backslash b_\alpha^*\}},$$
then
$$(x,0)\in \overline{\{q_n|\alpha+1:n\in b_\alpha^*\}}\cap\overline{\{q_n|\alpha+1:n\in a_\alpha^*\backslash b_\alpha^*\}},$$
since $f_n^\alpha(q_k|\alpha)=0$ for all $n\in B$ and $k\in a_\alpha$.

At step $\alpha\in \odd$ we assume that we are given $a_\beta^i, b_\beta^i$ satisfying (1) and (2) from the even step for all $\beta<\alpha$ (where $i=*$ if $\beta\in \odd$ and $i\in \{0,1\}$ if $\beta\in \even$). We call this step non-trivial, if the closures of $\pi_\alpha[U_\alpha]$ and $\pi_\alpha[V_\alpha]$ have non-empty intersection. If the case is non-trivial we use Lemma \ref{lemma-for-odd} (note that Lemma \ref{connected-extension} implies that $K_\alpha$ is connected) to find appropriate $(f_n)_{n\in\omega}\subseteq C_I(K_\alpha), A_i$ and $S_i$ for $i=0,1$. In the same way as in the even step we find $B\subseteq \omega$ such that $K_\alpha(B)$ is a strong extension of $K_\alpha$ and the conditions (1) and (2) are preserved in $K_\alpha(B)$ for $\beta<\alpha$. To finish this step we define $K_{\alpha+1}=K_\alpha(B), a_\alpha^i=S_i, b_\alpha^i=A_i$ and $q_n|\alpha+1=q_n^B|\alpha+1$. Lemma \ref{lemma-for-odd} guarantees that the condition (2) holds at the step $\alpha+1$.

In both cases the density of $Q_{\alpha+1}=\{q_n|\alpha+1:n\in\omega\}$ in $K_{\alpha+1}$ follows from Lemma \ref{extension-separable}.

\begin{proof1}
We will show that the space constructed above satisfies the required conditions.
(1) follows from Corollary \ref{corollary-dimension} and the fact that $[0,1]^k$ is a $k$-dimensional space. $Q$ is a countable dense set in $K$, since each $Q_\alpha$ is dense in $K_\alpha$ for $\alpha<\omega_1$. Connectedness follows from inductive argument using Lemma \ref{connected-extension}.

Let $U_n, l_n, \mu_n$ be as in (4). Let $U_n=\bigcup_{m\in\omega} U_{n,m}\cap K$ where $U_{n,m}$ are basic open sets in $[0,1]^{\omega_1}$. Every $U_{n,m}$ is determined by finitely many coordinates, so there is $\gamma<\omega_1$ such that $\pi_\gamma^{-1}(\pi_\gamma[U_{n,m}])= U_{n,m}$ for $n\in \omega$, where $\pi_\gamma$ is the natural projection from $[0,1]^{\omega_1}$ onto $[0,1]^\gamma$ (so $U_{n,m}$ are determined by first $\gamma$ coordinates). By Lemma \ref{diamond-lemma} there is $\alpha>\gamma, \alpha\in \even$ such that for $n\in\omega$ 
\begin{itemize}
    \item $\mu_n|C(K_\alpha)=\mu_n^\alpha$,
    \item $\pi_\alpha[U_{n,m}]=U_{n,m}^\alpha$,
    \item $l_n=l_n^\alpha$.
\end{itemize}
Let $(f_n^\alpha)_{n\in B}$ be such that in the $\alpha$-th step of construction. Since $(f_n^\alpha)_{n\in B}$ satisfy conditions of Proposition \ref{metric-functions},
functions $f_n=f_n^\alpha\circ\pi_\alpha$ satisfy conditions (a-c). (d) follows from \cite[Lemma 4.6]{few-operators-main} and the fact that $K_{\alpha+1}$ is a strong extension of $K_\alpha$ by $(f_n)_{n\in B}$. By construction we have $$ \overline{\{q_n:n\in b_\alpha^*\}}\cap\overline{\{q_n:n\in a_\alpha^*\backslash b_\alpha^*\}}\neq \varnothing$$
and by $(*)$ and $(**)$ 
$$\{q_n:n\in b_\beta^*\}= \{q_{l_n}:n\in B\},$$
$$ \{q_n:n\in a_\alpha^*\backslash b_\alpha^*\}= \{q_{l_n}:n\in N\backslash B\},$$
which gives (e).

Now we will prove (5). Fix open sets $U,V\subseteq K$ such that $\overline{U}\cap \overline{V}\neq \varnothing$. As $K$ is separable it is c.c.c. so there are open $U'\subseteq U, V'\subseteq V$ which are countable unions of basic open sets such that $\overline{U'}=\overline{U}$ and $\overline{V'}=\overline{V}$ (namely  it is enough to take as $U'$ the union of a maximal antichain of open subsets in $U$, and similarly for $V'$). Without loss of generality we may assume that $U'=U$ and $V'=V$.
Since $U,V$ are countable unions of basic open sets, there is $\gamma<\omega_1$ such that $U,V$ are determined by coordinates less than $\gamma$. Let $\alpha> \gamma, \alpha\in \odd$ be such that $U=U_\alpha\cap K, V=V_\alpha\cap K$. Then $\overline{\pi_\alpha[U]}\cap \overline{\pi_\alpha[V]}$ is nonempty so $\alpha$-th step in construction is nontrivial. By construction we have for $i=0,1$
$$\overline{\{q_n|\beta:n\in b_\alpha^i\}}\cap\overline{\{q_n|\beta:n\in a_\alpha^i\backslash b_\alpha^i\}}\neq \varnothing $$ for all $\beta>\alpha$, so there are $x_i\in \overline{U}\cap \overline{V}$ such that
$$x_i\in \overline{\{q_n:n\in b_\alpha^i\}}\cap\overline{\{q_n:n\in a_\alpha^i\backslash b_\alpha^i\}}.$$
To finish the proof we need only to notice that $x_0\neq x_1$, but this follows form the fact that $a_\alpha^i, b_\alpha^i$ were chosen to satisfy Lemma \ref{lemma-for-odd}(3). 
\end{proof1}

\begin{lemma}\label{borel-functions}
Suppose that $(U_n)_{n\in\omega}$ is a sequence of pairwise disjoint open subsets of a compact Hausdorff space $K$. Let $M,N\subset\omega$ be infinite sets such that $M\cap N$ is finite. Assume that $(f_m)_{m\in M}, (g_n)_{n \in N}\subseteq C_I(K)$ are such that $\supp(f_m)\subseteq U_m, \supp(g_n)\subseteq U_n$ for $m\in M, n\in N$ and the suprema $f_{\sup}=\sup \{f_m: m\in M\}, g_{\sup}= \sup \{g_n:n\in N\}$ exist in $C_I(K)$. Denote $$f=f_{\sup} - \sum_{m\in M} f_m, \ g=g_{\sup} - \sum_{n\in N} g_n.$$
Then $f,g$ are Borel functions with disjoint supports. 
\end{lemma}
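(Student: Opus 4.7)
The Borel claim is immediate: $f_{\sup}$ and $g_{\sup}$ are continuous, and each pointwise sum $\sum_{m\in M}f_m$, $\sum_{n\in N}g_n$ is the Baire-class-one pointwise limit of its finite continuous partial sums, well-defined because the disjoint support condition leaves at most one nonzero summand at each point. The real content is the disjointness of supports, which I would organise around two key facts: first, $f\equiv 0$ on the dense open set $D_M := D((f_m)_{m\in M})$ (and symmetrically $g\equiv 0$ on $D_N := D((g_n)_{n\in N})$); second, $f_{\sup}$ and $g_{\sup}$ both vanish on $(K\setminus D_M)\cap(K\setminus D_N)$. Together these force $\{f\neq 0\}\subseteq (K\setminus D_M)\setminus (K\setminus D_N)$ and $\{g\neq 0\}\subseteq (K\setminus D_N)\setminus (K\setminus D_M)$, which are disjoint.

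For the first fact the plan is a standard minimality-of-supremum argument: at $y\in D_M$, choose a neighbourhood $W$ of $y$ making $J=\{m\in M:\supp f_m\cap W\neq\varnothing\}$ finite, so that on $W$ the pointwise sum $\sum_m f_m$ agrees with the continuous function $\max_{m\in J}f_m$. Assuming for contradiction $f_{\sup}(y)>\sum_m f_m(y)$, I would exhibit a strictly smaller continuous upper bound $\tilde F = \max(\max_{m\in J}f_m,\,f_{\sup}-\delta\psi)$ for $\{f_m:m\in M\}$ in $C_I(K)$, where $\psi\in C_I(K)$ is a bump at $y$ with $\psi(y)=1$ supported in $W$, and $\delta>0$ is smaller than $f_{\sup}(y)-\sum_m f_m(y)$. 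The only slightly delicate verification is that $\tilde F\geq f_m$ for $m\notin J$: on $\supp\psi\subseteq W$ this uses $f_m\equiv 0$, and off $\supp\psi$ it reduces to $\tilde F=f_{\sup}\geq f_m$. This contradicts minimality of $f_{\sup}$, so $f_{\sup}=\sum_m f_m$ on $D_M$ and hence $f\equiv 0$ there; symmetrically $g\equiv 0$ on $D_N$.

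For the second fact I would fix $x\in (K\setminus D_M)\cap(K\setminus D_N)$ and exploit the finiteness of $M\cap N$: since $x$ is an $N$-accumulation point and $M\cap N$ is finite, every neighbourhood of $x$ still meets $U_n$ for infinitely many $n\in N\setminus M$, so I can pick a net $y_\lambda\to x$ with $y_\lambda\in U_{n_\lambda}$ and $n_\lambda\in N\setminus M$. Such a $y_\lambda$ lies in $D_M$ (its neighbourhood $U_{n_\lambda}$ is disjoint from $\supp f_m$ for every $m\in M$) and satisfies $\sum_m f_m(y_\lambda)=0$, so the first fact gives $f_{\sup}(y_\lambda)=0$; continuity of $f_{\sup}$ then yields $f_{\sup}(x)=0$, and since $x\notin U_k$ for any $k$ one has $f(x)=f_{\sup}(x)-0=0$. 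A symmetric argument with $y_\lambda\in U_{m_\lambda}$, $m_\lambda\in M\setminus N$, gives $g(x)=0$. The main obstacle I expect is the first fact—promoting the lattice-theoretic minimality of $f_{\sup}$ to the pointwise identity $f_{\sup}=\sum_m f_m$ on $D_M$—while the use of $M\cap N$ being finite in the second fact (to guarantee the existence of the ``zero-witnessing'' net) is comparatively routine.
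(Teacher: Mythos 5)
Your proof is correct, but it is organised differently from the paper's. The paper first removes the finite overlap $D=M\cap N$ by establishing the lattice identity $\sup\{f_m:m\in M\setminus D\}=f_{\sup}-\sum_{m\in D}f_m$, and then, with $M\cap N=\varnothing$, uses a Tietze modification of $f_{\sup}$ (exploiting minimality of the supremum) to show that $f_{\sup}$ vanishes on each $U_n$, $n\in N$, and symmetrically for $g_{\sup}$; disjointness for $f,g$ follows from $0\le f\le f_{\sup}$, $0\le g\le g_{\sup}$. You instead localize at $D_M=D((f_m)_{m\in M})$ and $D_N$: your bump-function argument for $f_{\sup}=\sum_m f_m$ on $D_M$ is the same minimality trick, but it supplies a proof of a fact the paper only asserts (it is inherited there from the analysis of suprema of disjoint families in the cited construction paper), and your use of the finiteness of $M\cap N$ -- producing near any $x\notin D_M\cup D_N$ points of $U_n$, $n\in N\setminus M$, where $f_{\sup}$ vanishes, so $f_{\sup}(x)=0$ by continuity -- replaces the paper's reduction step and yields the sharper localization $\{f\neq0\}\subseteq(K\setminus D_M)\cap D_N$, $\{g\neq0\}\subseteq(K\setminus D_N)\cap D_M$. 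One caveat: what you prove is that the sets $\{f\neq0\}$ and $\{g\neq0\}$ are disjoint, not their closures; this is the correct reading (closures can genuinely meet, e.g.\ for an extension of $[0,1]$ by two interleaved families of height-one bumps accumulating at a point, recorded on two extra coordinates), it is what the paper's own argument delivers, and it is exactly what is used in the proof of Theorem \ref{theorem-construction}.
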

\begin{proof}
$f$ and $g$ are Borel functions since they are pointwise sums of countably many continuous functions. Put $D=M\cap N$ and note that since $D$ is finite the function $\sum_{m\in D} f_m$ is continuous. We will show that 
\begin{gather}\tag{$+$}
    \sup\{f_m: m\in M\backslash D \}=\sup\{f_m: m\in M\} - \sum_{m\in D} f_m.
\end{gather}
Let $x\in K$. If $x\in \supp(f_n)$ for some $n\in  M\backslash D$, then $\sum_{m\in D} f_m (x) =0$, so $$(\sup\{f_m: m\in M\} - \sum_{m\in D} f_m)(x)= \sup\{f_m: m\in M\}(x)\geq f_n(x)$$ for every $n\in M\backslash D$. If $x\notin \supp(f_n)$ for every $n\in M\backslash D$, then since $f_n$'s have disjoint supports we get that    
$$(\sup\{f_m: m\in M\} - \sum_{m\in D} f_m)(x)\geq 0= f_n(x)$$ for $n\in M\backslash D$. Hence $$\sup\{f_m: m\in M\} - \sum_{m\in D} f_m\geq f_n$$ for $n\in M\backslash D$ in the lattice $C(K)$. Let $h\in C(K)$ be such that 
$$\sup\{f_m: m\in M\} - \sum_{m\in D} f_m\geq h\geq f_n$$
for $n\in M\backslash D$. Since $f_n$'s have disjoint supports we have 
$$\sup\{f_m: m\in M\} \geq h+\sum_{m\in D} f_m\geq \sum_{m\in M} f_m.$$
But $$\sup\{f_m: m\in M\}(x)= \sum_{m\in M} f_m(x)$$ for $x\in D((f_n)_{n\in M})$, so $$\sup\{f_m: m\in M\} - \sum_{m\in D} f_m = h,$$ because $\sup\{f_m: m\in M\} - \sum_{m\in D} f_m$ and $h$ are continuous functions equal on the set $D((f_n)_{n\in M})$, which is dense in $K$ (cf. Lemma \ref{continuous-sum}). This completes the proof of the equality $(+)$. 

From $(+)$ we get that $$\sup\{f_m: m\in M\backslash D \}- \sum_{m\in M\backslash D} f_m = \sup\{f_m: m\in M\} - \sum_{m\in M} f_m= f.$$
In particular in the definition of $f$ we may replace $M$ with $M\backslash D$ and assume that $M\cap N=\varnothing$. 

We will show that in this case we have $\supp(f_{\sup})\cap \supp(g_{\sup}) = \varnothing$, which will finish the proof since $\supp(f)\subseteq \supp(f_{\sup})$ and $\supp(g)\subseteq \supp(g_{\sup})$ (the inclusions hold because $f\leq f_{\sup}, g\leq g_{\sup}$ and $f,g$ are non-negative).
Firstly we observe that for each $n\in N$ we have $\supp(f_{\sup})\cap \supp(g_n)=\varnothing$. Indeed, if it is not the case, then there is $x\in U_n$ such that $f_{\sup}(x)>0$. Then by the Tietze extension theorem we may find $h\in C_I(K)$ such that $h(x)=0$ and $h|K\backslash U_n= f_{\sup}|K\backslash U_n$. But then $f_{\sup}>h\geq f_m$ for every $m\in M$, which is a contradiction with the fact that $f_{\sup}$ is the supremum of $f_m$'s. Now, in the same way we show that if $\supp(f_{\sup})\cap \supp(g_{\sup}) \neq \varnothing$, then there is $h'$ such that $g_{\sup}>h'>g_n$ for $n\in N$.
\end{proof}

\begin{theorem}\label{theorem-construction}
Assume $\diamondsuit$.
For each $k>0$ there is a compact Hausdorff, separable, connected space $K$ such that $C(K)$ has few operators and $\dim K=k$.
\end{theorem}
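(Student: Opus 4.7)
The plan is to take $K$ to be the space produced by Proposition \ref{construction-properties} for the given $k$: properties (1)--(3) immediately give that $K$ has dimension $k$, is separable, and is connected. The remaining task---showing that $C(K)$ has few operators---follows the strategy of Koszmider in \cite{few-operators-main}, with properties (4) and (5) of Proposition \ref{construction-properties} serving as the combinatorial inputs.

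Given a bounded $T:C(K)\to C(K)$, I would first show $T$ is a weak multiplier by contradiction. If not, Theorem \ref{weak-multiplier} supplies $\delta>0$, pairwise disjoint $(g_n)\subseteq C_I(K)$, and pairwise disjoint open $(V_n)$ with $\supp(g_n)\cap V_m=\varnothing$ and $|T(g_n)|V_n|>\delta$. Choose $q_{l_n}\in V_n\cap Q$ with $|T(g_n)(q_{l_n})|>\delta$ (so $(q_{l_n})$ is relatively discrete), and set $\mu_n=T^*(\delta_{q_{l_n}})$, whence $|\mu_n|(\supp g_n)>\delta$. After passing to subsequences and shrinking, enclose each $\supp(g_n)$ in a pairwise disjoint countable union of basic open sets $U_n\subseteq K$ that avoids every $q_{l_m}$ while preserving $|\mu_n|(U_n)>\delta'$ for some uniform $\delta'>0$. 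This matches the hypotheses of Proposition \ref{construction-properties}(4), which yields $\varepsilon>0$, $(f_n)\subseteq C_I(K)$ and infinite $B\subseteq N\subseteq\omega$ together with a lattice supremum $f_{\sup}=\sup\{f_n:n\in B\}\in C(K)$.

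The key computation compares $T(f_{\sup})(q_{l_n})=\int f_{\sup}\,d\mu_n$ across the two halves of $N$. Using that $f_{\sup}$ agrees with $\sum_{m\in B}f_m$ on the dense open set $D((f_n)_{n\in B})$---which contains each $\supp(f_m)$ as well as each $q_{l_n}$ (since $q_{l_n}\in V_n$ is separated from all $U_m$)---and bounding the Borel residual $f_{\sup}-\sum_{m\in B}f_m$, conditions (b) and (c) give $|T(f_{\sup})(q_{l_n})|>2\varepsilon/3$ for $n\in B$, whereas (c) alone (applied with $B\setminus\{n\}=B$) gives $|T(f_{\sup})(q_{l_n})|<\varepsilon/3$ for $n\in N\setminus B$. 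Property (e) then furnishes $x^*$ lying in the closure of both index families, and continuity of $T(f_{\sup})$ forces the contradiction $2\varepsilon/3\leq|T(f_{\sup})(x^*)|\leq\varepsilon/3$. Hence $T$ is a weak multiplier.

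To upgrade a weak multiplier to a weak multiplication, I would invoke the corresponding argument from \cite{few-operators-main}: writing $T^*=gI+S$ with $g$ bounded Borel and $S$ weakly compact, property (5) of Proposition \ref{construction-properties}---that overlapping closures of open sets in $K$ always meet in at least two points---combined with the separability and connectedness of $K$ prevents $g$ from having the jumps that would distinguish a weak multiplier from a weak multiplication, forcing $g\in C(K)$. The main obstacle I anticipate is the rigorous control of the Borel residual $f_{\sup}-\sum_{m\in B}f_m$ when integrating against $\mu_n$: the naive pointwise identity holds only on $D((f_n)_{n\in B})$, and on its complement $f_{\sup}$ can take nonzero values on a set of positive $\mu_n$-mass. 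The cleanest workaround is to apply Proposition \ref{construction-properties}(4) twice, on two complementary infinite subsets, and then use Lemma \ref{borel-functions} to guarantee that the two Borel residuals have disjoint supports, so that taking an appropriate difference of the two suprema eliminates the residual contribution and leaves the estimates above intact.
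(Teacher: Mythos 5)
Your overall architecture matches the paper's: take $K$ from Proposition \ref{construction-properties}, reduce ``few operators'' to ``every operator is a weak multiplier'' via property (5) and the results of \cite{few-operators-main}, run the contradiction argument with Theorem \ref{weak-multiplier}, the measures $\mu_n=T^*(\delta_{q_{l_n}})$, and the supremum $f_{\sup}$ supplied by property (4), and finish by playing the two estimates $|T(f)(q_{l_n})|\geq 2\varepsilon/3$ on $B$ against $|T(f)(q_{l_n})|<\varepsilon/3$ on $N\setminus B$ and invoking (4e). You also correctly isolate the one genuinely delicate point: the Borel residual $f_{\sup}-\sum_{m\in B}f_m$ lives on $K\setminus D((f_n)_{n\in B})$, which each $\mu_n$ may charge.

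However, your proposed workaround for that point does not work. Applying Proposition \ref{construction-properties}(4) on two complementary infinite sets and using Lemma \ref{borel-functions} gives two residuals $r_1,r_2$ with \emph{disjoint} supports, and a difference of functions with disjoint supports cancels nothing: $r_1-r_2$ is still a Borel function whose integral against a fixed $\mu_n$ is completely uncontrolled, so the error term survives and your two key estimates are not recovered. The disjointness of supports is the right ingredient but must be exploited differently: the paper takes an \emph{uncountable} almost disjoint family $\{M_\xi:\xi<\omega_1\}$ of infinite subsets of $\omega$, notes via Lemma \ref{borel-functions} that the residuals $f_{M_\xi}$ have pairwise disjoint supports, and argues that if $\int f_{M_\xi}\,d\mu_{n}\neq 0$ failed for every $\xi$ (for some $n$ depending on $\xi$), then by pigeonhole a single finite measure $\mu_n$ would give nonzero mass to uncountably many pairwise disjoint Borel sets, which is impossible. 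This produces one index set $M$ whose residual integrates to zero against \emph{every} $\mu_n$ simultaneously, and only then do the estimates go through. Without some version of this uncountable pigeonhole (two sets, or any finite or countable family, will not suffice), the argument has a genuine gap at its central step.
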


\begin{proof}
We will show that if $K$ is the space with properties from Proposition \ref{construction-properties}, then $C(K)$ has few operators. $K$ satisfies Proposition \ref{construction-properties}(5), so by \cite[Theorem 2.7, Lemma 2.8]{few-operators-main} it is enough to show that all operators on $C(K)$ are weak multipliers. 

Assume that there is a bounded linear operator $T:C(K)\rightarrow C(K)$, which is not a weak multiplier. By Theorem \ref{weak-multiplier} there is a pairwise disjoint sequence $(g_n)_{n\in\omega}\subseteq C_I(K)$ and pairwise disjoint open sets $(V_n)_{n\in \omega}$ such that $g_n|V_m=0$ for $n,m\in \omega$ and $|T(g_n)|V_n|>\delta$ for some $\delta>0$. 
    For $n\in \omega$ let $U_n=\supp(g_n)$. Let $g'_n\in C([0,1]^{\omega_1})$ be an extension of $g_n$ and $U_n'=\supp(g_n')$. By Mibu's theorem (see \cite{mibu}) for every $n\in\omega$ there is $\alpha_n<\omega_1$ such that whenever $x,y\in [0,1]^{\omega_1}, x|\alpha_n=y|\alpha_n$, we have $g'_n(x)=g'_n(y)$. Hence $U'_n$ is an open set of the form $W_n\times [0,1]^{\omega_1 \backslash \alpha_n}$, where $W_n$ is an open set in $[0,1]^{\alpha_n}$. Since $\alpha_n$ is countable, $W_n$ is a union of countably many basic open set in $[0,1]^{\alpha_n}$. Thus for every $n\in\omega$ the set $U'_n$ is a union of countably many basic open sets in $[0,1]^{\omega_1}$ and $U_n=U'_n\cap K$ is a union of countably many basic open sets in $K$. 

Let $(l_n)_{n\in\omega}$ for $n\in\omega$ be such that $q_{l_n}\in V_n$ (so in particular $\{q_{l_n}:n\in \omega\}$ is relatively discrete in $K$) and define $\mu_n=T^*(\delta_{q_{l_n}})$. Then $|\int g_n d\mu_n|= |T(g_n)(q_{l_n})|>\delta$. Since $\supp(g_n)\subseteq U_n$ and $\|g_n\|\leq 1$ we get that $|\mu_n|(U_n)\geq |\int g_n d\mu_n|>\delta$. 

By Proposition \ref{construction-properties} for every infinite subset $A\subseteq \omega$ there are infinite sets $B_A\subseteq N_A\subseteq A$, continuous functions $(f_{n,A})_{n\in A}\subseteq C_I(K)$ and $\varepsilon_A$ such that 
\begin{enumerate}[(a)] 
    \item $(f_{n,A})_{n\in A}$ is a sequence of pairwise disjoint functions with $\supp(f_{n,A})\subseteq U_n$ for $n\in A$,
    \item $|\int f_{n,A} d\mu_n|>\varepsilon_A$ for $n\in B_A$,
    \item $\sum \{|\int f_{m,A} d\mu_n|: n\neq m, m\in B_A \}<\varepsilon_A/3$ for $n\in N_A$,
    \item $\{f_{n,A}:n\in B_A\}$ has its supremum in the lattice $C(K)$,
    \item  $\overline{\{q_{l_n}: n\in B_A\}}\cap \overline{\{q_{l_n}: n\in N_A\backslash B_A\}}\neq \varnothing.$
    \end{enumerate}

Put $f_A=\sup\{f_{n,A}: n\in B_A\}-\sum_{m\in B_A} f_{m,A}$. We will show that there is an infinite set $M\subseteq \omega$ such that
\begin{gather}\tag{$++$}
\int f_M d\mu_n=0.    
\end{gather}

Suppose this is not the case. Let $\{M_{\xi}: \xi<\omega_1\}$ be a family of infinite subsets of $\omega$ such that for $\xi\neq \xi'$ the set $M_\xi\cap M_{\xi'}$ is finite. Assume ($++$) does not hold for every $M_\xi$. Then there is $n\in \omega$ such that 
$$\int f_{M_\xi}d\mu_n\neq 0$$ for uncountably many $\xi$'s. By Lemma \ref{borel-functions} $f_{M_\xi}, f_{M_{\xi'}}$ have disjoint supports for $\xi\neq \xi'$, so in particular there is an uncountable family of non-null (with respect to $\mu_n$) Borel sets in $K$, which is a contradiction. 

Put $f_n=f_{n,M}, \varepsilon=\varepsilon_M$, $B=B_M$ and $N=N_M$. Let $f=\sup\{f_n: n\in B\}$.
By (b), (c), ($++$) and the definition of $\mu_n$ we get that for $n\in B$
\begin{gather*}
    |T(f)(q_{l_n})|= |\int fd\mu_n|= |\int f_nd\mu_n+\int\sum_{m\in B\backslash \{n\}} f_m|\geq \\
    |\int f_nd\mu_n|- |\int\sum_{m\in B\backslash \{n\}} f_m|\geq \varepsilon - \varepsilon/3=2\varepsilon/3.
\end{gather*}
For $n\in N\backslash B$ (c) gives 
$$|T(f)(q_{l_n})|= |\int \sum_{m\in B} f_md\mu_n|<\varepsilon/3.$$
As $T(f)$ is a continuous function on $K$ we obtain that $$\overline{\{q_{l_n}:n\in B\}}\cap \overline{\{q_{l_n}:n\in N\backslash B\}}=\varnothing,$$
which contradicts (e).
\end{proof}

\begin{theorem}\label{main}
Assume $\diamondsuit$. Then for every $k\in \omega\cup \{\infty\}$ there is a compact Hausdorff space $K$ such that $\dim(K)=k$ and whenever $C(K)\sim C(L)$, $\dim(L)=k$.
\end{theorem}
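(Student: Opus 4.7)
The plan is to observe that Theorem \ref{main} is an essentially immediate assembly of the two main intermediate results of the paper, Theorem \ref{theorem-construction} and Corollary \ref{corollary-dimension-equal}, after handling the trivial case $k=0$ separately.

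First I would dispose of $k=0$: take $K$ to be a one-point space (or any finite discrete space). Then $C(K)$ is finite-dimensional, so any $L$ with $C(L)\sim C(K)$ must be finite by Cengiz's cardinality invariant (cited in the introduction), hence $\dim L = 0$. Trivially $\dim K=0$ as well, so the conclusion holds without any use of $\diamondsuit$.

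For $k>0$ with $k\in \omega\cup\{\infty\}$, I would apply Theorem \ref{theorem-construction} to obtain, under $\diamondsuit$, a compact Hausdorff, separable, connected space $K$ with $\dim K=k$ and $C(K)$ having few operators. Although Theorem \ref{theorem-construction} is stated for finite $k$, the underlying construction in Proposition \ref{construction-properties} is formulated for all $k\in\omega\cup\{\infty\}$ (with $K_\omega=[0,1]^\omega$ as the base of the inductive system in the $k=\infty$ case), so the same argument yields the required space in the infinite-dimensional case as well. Given any compact Hausdorff $L$ with $C(K)\sim C(L)$, Corollary \ref{corollary-dimension-equal} then gives $\dim L=k$; for finite $k$ this is immediate, and for $k=\infty$ the proof of the corollary via Theorem \ref{few-operators-modulo-finite} and Theorem \ref{dim-equal} still applies, since Theorem \ref{dim-equal} allows $\dim\in\omega\cup\{\infty\}$.

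There is no serious obstacle here: the theorem is deliberately set up to be an immediate corollary, and the only mild point to verify is that the $k=\infty$ branch of the construction really fits into the framework of Corollary \ref{corollary-dimension-equal}. The substantive mathematical content lies in Theorem \ref{theorem-construction} (the $\diamondsuit$-construction) and in Theorem \ref{few-operators-modulo-finite} (the homeomorphism-modulo-finite dichotomy), both of which are established earlier in the paper.
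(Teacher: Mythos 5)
Your proposal is correct and follows essentially the same route as the paper: the $k=0$ case is handled by a finite space, and for $k>0$ (including $k=\infty$, via Proposition \ref{construction-properties}) the result is obtained by combining Theorem \ref{theorem-construction} with Corollary \ref{corollary-dimension-equal}. Your extra remarks on the $k=0$ and $k=\infty$ cases only make explicit what the paper leaves implicit.
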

\begin{proof}
For $k=0$ every finite space $K$ works. If $k>0$, then the space from Theorem \ref{theorem-construction} satisfies the required property by Corollary \ref{corollary-dimension-equal}.
\end{proof}

\section{Remarks and questions}

The first natural question concerning our results is whether Theorem \ref{main} is true without any additional assumption. 
\begin{question}\label{question1}
Let $k\in \omega\backslash \{0\}$. 
Is there (in {\sf ZFC}) a compact Hausdorff space $K$ such that $\dim(K)=k$ and whenever $C(K)\sim C(L)$, $\dim(L)=k$?
\end{question}
In the light of Theorem \ref{few-operators-modulo-finite} to show that the Question \ref{question1} has positive answer it would be enough to prove that the following question has positive answer. 
\begin{question}
Let $k\in \omega\backslash \{0\}$. 
Is there (in {\sf ZFC}) a compact, separable, connected Hausdorff space $K$ such that $\dim K=k$ and $C(K)$ has few operators? 
\end{question}

The original construction of a Banach space $C(K)$ where all the operators are weak multipliers was carried out in {\sf ZFC} (\cite{few-operators-main}). In this construction we set all sequences of pairwise disjoint continuous functions on $[0,1]^{\mathfrak c}$ into a sequence of length $\mathfrak c$, and the choice of the strong extension at $\alpha$-th step depends on the $\alpha$-th sequence of functions. Later, in order to prove that $K$ satisfies the required conditions, we look at any sequence $(\mu_n)_{n\in\omega}$ of Radon measures on $K$ and show that we can find sequences of continuous functions satisfying properties (a-e) from Proposition \ref{construction-properties}. However, in this approach we may obtain an infinite-dimensional space, since used strong extensions may increase the dimension. One can try to proceed in a similar way, by applying only those extensions that preserve the dimension. The problem is that we do not know, whether the extension by the sequence of functions given at some step changes the dimension, since it depends on the earlier steps (i.e. it depends on the bookkeeping of sequences of continuous functions on $[0,1]^{\mathfrak c}$). Consequently, there may be a sequence of measures on the final space, for which every suitable sequence of functions appears at a step, in which using the extension would increase the dimension. 

Although the main reason to use the diamond principle is the guessing of measures in Lemma \ref{diamond-lemma}, we also needed the continuum hypothesis to ensure that all intermediate spaces from our construction are metrizable. At that point we used the fact that for every non-zero Radon measure on metrizable compact space there is a zero-dimensional $G_\delta$ compact subset of non-zero measure (Theorem \ref{measure-zero-dim}). In the light of this theorem the following problem seems to be interesting. 

\begin{problem}
Describe the class of compact Hausdorff spaces $K$ such that for every non-zero Radon measure $\mu$ on $K$ there is a zero-dimensional compact subset $L\subseteq K$ such that $\mu(L)\neq 0$.
\end{problem}

Assume that $K$ is such that $C(K)$ has few operators. Then by \cite[Proposition 4.8]{schlackow} there is a space $L$ such that $C(K)\sim C(L)$, but $C(L)$ does not have few operators. However, by Theorem \ref{few-operators-modulo-finite} the topology of $L$ is very close to $K$, at least if we assume that $K$ is separable and connected. 

\begin{question}
Suppose that $K$ is a compact Hausdorff space such that every operator $T:C(K)\rightarrow C(K)$ is a weak multiplier and $C(L)\sim C(K)$ for some compact Hausdorff space. Is it true that $K$ and $L$ are homeomorphic modulo finitely many points in the sense of Theorem \ref{few-operators-modulo-finite}?
\end{question}

One may also ask, what properties $K$ should have to satisfy Theorem \ref{main}. There are known examples of ``nice" spaces $K$ such that if $C(K)\sim C(L)$, then $L$ is not zero-dimensional. For instance Avil\'es and Koszmider showed that there is such a space which is quasi Radon-Nikod\'ym (\cite{AK}) and Plebanek gave a consistent example of such a space which is a Corson copmact (\cite{plebanek-corson2}).

\section*{Acknowledgements}
The author would like to thank his PhD supervisor Professor Piotr Koszmider for introducing to the topic, constant help and many valuable
suggestions. 
\bibliographystyle{amsplain}
\bibliography{bibliography}
\end{document}